\newtheorem{theorem}{\bf Theorem}[section]
\newtheorem{lemma}[theorem]{\bf Lemma}
\newtheorem{prop}[theorem]{\bf Proposition}
\newtheorem{coro}[theorem]{\bf Corollary}
\newtheorem{defn}{\bf Definition}[section]
\newtheorem{remark}{{\bf Remark}}[section]
\newenvironment{proof}{\noindent{\em Proof.}}{\quad \hfill$\Box$\vspace{2ex}}
\def \bR {\Bbb R}
\def \and {\, \mbox{\rm and}\, }
\def \l {\left}
\def \r {\right}
\newcommand{\Rmnum}[1]{\expandafter\@slowromancap\romannumeral #1@}
\begin{document}
\title{\bf On the Multilinear Fractional Integral Operators with Correlation Kernels}
\author{Zuoshunhua Shi \thanks{1.~School of Mathematics and Statistics,~Central South University, Changsha Hunan 410083, P.R. China. \qquad
2.~School of Mathematical Science, Graduate University
of the Chinese Academy of Sciences, Beijing 100049, P.R. China. E-mail address:
{\it shizsh@163.com}.},\quad Di Wu\thanks{School of Mathematical
Science, Graduate University of the Chinese Academy of Sciences,
Beijing 100190, P. R. China. E-mail address: {\it
wudi08@mails.ucas.ac.cn}.} \quad and Dunyan Yan\thanks{School of
Mathematical Science, Graduate University of the Chinese Academy of
Sciences, Beijing 100190, P. R. China. E-mail address: {\it
ydunyan@ucas.ac.cn}.} }
\date{}
\maketitle{}
\begin{abstract}
In this paper, we study a class of multilinear fractional integral
operators associated with correlation kernels $\prod_{1\leq i<j \leq
k}|x_i-x_j|^{-\alpha_{ij}}$. We obtain the necessary and sufficient condition under which these operators are bounded from $L^{p_1}\times \cdots \times L^{p_k}$ into $L^q$. As a consequence, we also get the endpoint estimates from $L^{p_1}\times \cdots \times L^{p_k}$ to $BMO$.
\end{abstract}
{Keywords:} Multilinear fractional integral operator, Correlation
kernels, Brascamp-Lieb inequality, Selberg integral.\\
{Mathematics Subject Classification (2010): 31B10, 42B99, 26A33.}
\section{Introduction}
\quad Fractional integral operators arise frequently in various
subjects such as Fourier analysis and partial differential
equations. The Riesz potentials are classical fractional integral
operators which were generalized to multilinear variants by many
authors; see \cite{christ}, \cite{beckner}, \cite{grafakos92},
\cite{kenigstein}, \cite{gra-kal}, \cite{valdimarsson}, \cite{beckner2013}, \cite{wudi} and \cite{beckner2015}.
In this paper, we mainly study mapping properties of the multilinear fractional integral operators with correlation kernels of the form $\prod|x_i-x_j|^{-\alpha_{ij}}$. These operators can be written as
\begin{equation}\label{section1. multilinear operators}
T(f_1,\cdots,f_k)(x_{k+1})
=\int_{\mathbb{R}^{nk}}\frac{\prod\limits_{i=1}^{k}f_{i}(x_{i})}
{\prod\limits_{1\leq i<j\leq k+1}|x_i-x_j|^{\alpha_{ij}}}
dx_1dx_2\cdots dx_k
\end{equation}
for $f_{i}\in C_{0}^\infty(\mathbb{R}^n)$ and $\alpha_{ij}\geq 0$.
It is clear that $T$ reduces to Riesz potentials when $k=1$. It is natural to assume that the kernel of $T$ is a Schwartz kernel such that $T$ maps $(C_0^{\infty}(\mathbb{R}^n))^k$ into $\mathcal{D}'(\mathbb{R}^n)$ continuously. This requires some restrictions on the exponents $\alpha_{ij}$. Another issue is to
determine the necessary and sufficient condition under which $T$
is bounded from $L^{p_1}\times \cdots\times L^{p_k}$
into $L^q$. More precisely, we shall establish the following
inequality
\begin{equation}\label{L^p boundedness of multilinear
fractional operators} \|T(f_1,f_2, \cdots,f_{k})\|_{L^q} \leq C
\prod\limits_{i=1}^{k}\|f_{i}\|_{L^{p_{i}}}
\end{equation}
with the constant $C$ independent of $f_i\in L^{p_i}$. The Hardy-Littlewood-Sobolev inequality is a special case of this inequality for $k=1$. For $k\geq 2$, it is more convenient to consider the following multilinear functional:
\begin{equation}\label{multilinear functional}
\Lambda(f_1,f_2,\cdots,f_{k+1})=\int_{\mathbb{R}^{n(k+1)}}
\frac{\prod\limits_{i=1}^{k+1}f_{i}(x_{i})}
{\prod\limits_{1 \leq i<j\leq k+1}|x_i-x_j|^{\alpha_{ij}}}
dx_1dx_2\cdots dx_{k+1}.
\end{equation}
Then the boundedness of $T$ is equivalent to
\begin{equation}\label{inequality of multilinear fractional functionals}
|\Lambda(f_1,f_2,\cdots,f_{k+1})| \leq C
\prod\limits_{i=1}^{k+1}\|f_{i}\|_{L^{p_{i}}}.
\end{equation}

The problems discussed above have close relation with several topics in Fourier analysis. In \cite{christ}, Christ applied a special case of (\ref{inequality of multilinear fractional functionals}) to establish the endpoint estimates of the restriction of the Fourier transform to curves in higher dimensions. Beckner \cite{beckner} obtained a conformally invariant inequality of the form
(\ref{inequality of multilinear fractional functionals}) which generalizes the result of Lieb \cite{Lieb}. Morpurgo obtained sharp inequalities for trace
functionals of pseudo-differential operators on the sphere $S^n$ in \cite{morpurgo}, where multilinear fractional integrals appear explicitly in the calculation of zeta functions of these operators. The sharp inequalities obtained in
\cite{morpurgo} also rely on the strict rearrangement of a class
of functionals with kernels $\prod K_{ij}(|x_i-x_j|)$. In this paper, we shall characterize $\alpha_{ij}$ and $p_i$ such that $\Lambda$ is bounded on $L^{p_1}\times\cdots\times L^{p_{k+1}}$. In this direction, the second author obtained partial results in his dissertation \cite{wudi}. One of our main results can be stated as follows.

\begin{theorem}\label{main theorem 1}
Let $\Lambda $ be the multilinear functional defined by
{\rm(\ref{multilinear functional})} with all $\alpha_{ij}\geq 0$.
Assume $1<p_i<\infty$ for $1\leq i\leq k+1$. Then there exists a constant $C$ such that {\rm(\ref{inequality of multilinear fractional functionals})} is
true if and only if the following three conditions hold simultaneously.
\begin{flalign}
&(i)\quad~ \sum_{i=1}^{k+1} \frac{1}{p_i}+
\sum\limits_{1\leq i<j\leq k+1}\frac{a_{ij}}{n}=k+1;&\nonumber\\
&(ii) \quad \sum\limits_{I}\frac{a_{ij}}{n}<|I|-1
~~\textrm{ for $I\subseteq \{1,2,\cdots,k+1\}$ with $|I|\geq 2$};&\nonumber\\
&(iii)\quad \textrm{For all nonempty proper
$I\subseteq\{1,2,\cdots,k+1\}$, one of the following two
statements is
true:}&\nonumber \\
&\quad~(a) \quad \sum\limits_{I}\frac{1}{p_i}
+\sum\limits_{I}\frac{\alpha_{ij}}{n}<|I|; &\nonumber \\
&\quad~(b) \quad \sum\limits_{I}\frac{1}{p_i}
+\sum\limits_{I}\frac{\alpha_{ij}}{n}=|I|, \quad
\sum\limits_{I^c}\frac{1}{p_i}\geq 1 \quad and \quad
\sum\limits_{J}\l(\frac{1}{p_i} +\sum\limits_{u\in
I}\frac{\alpha_{iu}}{n}\r)
+\sum\limits_{J}\frac{\alpha_{ij}}{n}\leq |J|&\nonumber\\
&\quad \quad \textrm{for all subsets $J$ of $I^c$}.&\nonumber
\end{flalign}
Here we assume $\alpha_{ij}=\alpha_{ji}$ and
$\alpha_{ii}=0$ for $1\leq i,\;j \leq k+1$. The cardinality of $I$
is denoted by $|I|$ and $I^c$ is the complement of $I$. The above
summations are defined by
\begin{equation*}
\sum\limits_{I}\frac{\alpha_{ij}}{n}=\sum\limits_{i,j\in I;
i<j}\frac{\alpha_{ij}}{n}\quad and \quad
\sum\limits_{I}\frac{1}{p_i}=\sum\limits_{i\in I}\frac{1}{p_i}
\end{equation*}
for all subsets $I$ of $\{1,2,\cdots,k+1\}$.
\end{theorem}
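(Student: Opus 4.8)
The plan is to establish the necessity and sufficiency of conditions (i)--(iii) separately. For necessity, I would test the inequality (\ref{inequality of multilinear fractional functionals}) against carefully chosen families of functions. Condition (i) is a scaling/homogeneity relation: replacing each $f_i(x)$ by $f_i(\lambda x)$ and tracking how both sides transform under $x_i\mapsto \lambda x_i$ forces the exponents to balance, giving (i) as a necessary identity. For condition (ii), I would localize: take each $f_i$ to be (an approximation of) the indicator of a common ball $B(0,r)$ and let $r\to 0$; the kernel singularity $\prod_{I}|x_i-x_j|^{-\alpha_{ij}}$ over the cluster $I$ must be locally integrable on $\mathbb{R}^{n|I|}$, and a computation of $\int_{B^{|I|}}\prod_{I}|x_i-x_j|^{-\alpha_{ij}}\,dx$ (a Selberg-type integral, hence the mention of Selberg integrals in the keywords) shows finiteness is equivalent to $\sum_I \alpha_{ij}/n < |I|-1$. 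For condition (iii), the relevant degeneration is anisotropic: shrink the variables indexed by $I$ to a small ball of radius $r$ while keeping the variables in $I^c$ at unit scale; estimating $\Lambda$ against such a test configuration and comparing powers of $r$ on both sides produces the dichotomy (a)/(b), with the subsidiary inequalities in (b) coming from further testing on sub-clusters $J\subset I^c$ after the $I$-variables have been integrated out.

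For sufficiency, the strategy is to reduce to the Hardy--Littlewood--Sobolev inequality (the $k=1$ case) by an inductive/iterative integration scheme. The idea is that under conditions (i)--(iii), one can always find an index, say $i=k+1$, and integrate out $x_{k+1}$ first: the inner integral $\int_{\mathbb{R}^n}\prod_{j\le k}|x_j-x_{k+1}|^{-\alpha_{j,k+1}}f_{k+1}(x_{k+1})\,dx_{k+1}$ is controlled, via a weighted HLS or a multilinear Riesz-potential estimate, by a product expression in the remaining variables that again has the correlation-kernel form with one fewer factor and with updated exponents. Condition (iii) is precisely what guarantees at each stage that a ``good'' variable to eliminate exists and that the updated exponents for the $(k)$-variable functional still satisfy the analogue of (i)--(iii); condition (ii) keeps all the intermediate kernels locally integrable, and condition (i) is the scaling bookkeeping that makes the final single-variable HLS inequality dimensionally consistent. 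One technical device I expect to need is interpolation/duality: when the strict inequality in (iii)(a) holds, one has room to move exponents and run an off-diagonal estimate, whereas the boundary case (iii)(b) is exactly where a weak-type endpoint plus the Marcinkiewicz-type argument (or a direct $BMO$ bound, foreshadowed in the abstract) must be invoked, and the extra inequalities in (b) are what make that endpoint step legitimate.

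The main obstacle, I expect, will be the sufficiency direction in the critical case (iii)(b): proving that when some cluster $I$ saturates the inequality $\sum_I 1/p_i + \sum_I \alpha_{ij}/n = |I|$, the operator still maps into $L^q$ (rather than merely into a weak or $BMO$ space) requires a delicate argument. One must show that saturation cannot occur ``too often'' simultaneously across nested clusters --- the side conditions $\sum_{I^c}1/p_i\ge 1$ and the family of inequalities over $J\subset I^c$ are designed to rule out the bad overlaps --- and then run the iteration in an order that defers all boundary clusters to the last step, where a single endpoint HLS-type estimate (weak $(1,1)$-flavoured, upgraded by the strict inequalities available elsewhere) closes the bound. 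Organizing this ordering combinatorially, and verifying that conditions (i)--(iii) are stable under the exponent updates at each elimination step, is the heart of the proof; I would set up a lemma that says precisely ``given exponents satisfying (i)--(iii) with $k\ge 1$, there is an index whose elimination yields exponents satisfying (i)--(iii) with $k-1$'' and induct on $k$.
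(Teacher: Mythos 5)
Your necessity sketch is essentially the paper's: dilation for (i), indicator test functions and local integrability of the Selberg-type integral for (ii), and anisotropic test configurations for (iii) (the paper additionally needs logarithmically weighted test functions to force $\sum_{I^c}1/p_i\geq 1$ and a separate argument, Theorem \ref{sec 3. weighted fracInt Operator}, to extract the subsidiary inequalities over $J\subset I^c$, but your plan points in the right direction).

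The sufficiency direction, however, has a genuine gap. Your central device --- integrate out $x_{k+1}$ against $f_{k+1}$ and obtain ``a product expression in the remaining variables that again has the correlation-kernel form,'' backed by a lemma that some index can always be eliminated so that (i)--(iii) persist with $k-1$ --- does not work as stated. First, for a general $f_{k+1}\in L^{p_{k+1}}$ the inner integral is not of product power form at all; the paper circumvents this by the rearrangement inequality (Lemma \ref{section2. rearrangement}), which reduces everything to the explicit power weights $|x_i|^{-n/p_i}$, at the price of only yielding weak-type endpoint bounds that must then be upgraded via the multilinear interpolation theorem (Theorem \ref{section2. Multilinear interpolation}) applied at $k+1$ affinely independent exponent tuples --- a structural step absent from your plan. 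Second, even after this reduction, eliminating one variable does not return a single kernel of the same form: by Theorem \ref{Estimtates of Generalization of Riesz Potential} it produces a sum of terms $\boldsymbol{L}_u$, and for each one must redistribute the eliminated exponents onto the remaining pairs so that the analogues of (i)--(iii) survive. The existence of such a redistribution is the real content of the proof (the systems $(V.1)$--$(V.4)$, $(VI.3)$--$(VI.5)$, settled via Lemma \ref{section2. existence System of LI} and the combinatorial $\mu_J$-process with continuous chains in \S5--\S6); it is not the ``bookkeeping'' your proposal assumes, and no simple ``one good variable exists'' statement replaces it. Finally, your handling of the saturated case (iii)(b) by ``deferring boundary clusters to the last step'' is not an argument; the paper instead splits the functional at a maximal saturated cluster $J_0$ into two weighted (Stein--Weiss type) inequalities via Theorem \ref{sec 3. weighted fracInt Operator} and proves each by the same rearrangement--pointwise estimate--interpolation machinery, using $\sum_{J_0^c}1/p_i\geq 1$ exactly where the interpolation hypothesis $\sum_i 1/t_i\geq 1/t$ is needed.
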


Some remarks will help clarify the necessity of conditions in the
theorem. The system of inequalities $(ii)$ ensures that $T(f_1,\cdots,f_k)$ is locally integrable for bounded and compactly supported functions $f_i$. Similarly, we shall also prove that $(ii)$ is the necessary and sufficient condition ensuring finiteness of the sphere Selberg integral
$\int_{(S^n)^k}\prod|\xi_i-\xi_j|^{-\alpha_{ij}}d\sigma(\xi_1)\cdots
d\sigma(\xi_k)$. This integral appears explicitly in the formula of
the sharp constant of $\Lambda$; see \cite{beckner} and \cite{grafakos1}. Our theorem generalizes some earlier results in Christ \cite{christ} and Grafakos-Kalton\cite{gra-kal}, and also extends the Stein-Weiss potentials in \cite{steinweisspaper} to the multilinear setting.

It should be pointed out that $\Lambda$ in (\ref{multilinear functional}) is related to the well-known Brascamp-Lieb inequality. For $1\leq i \leq m$, let $H$ and $H_i$ be real Hilbert spaces and $B_i:H\rightarrow H_i$ surjective linear transformations. Given an $m-$tuple $(p_i)$ in $[1,\infty]^m$, the Brascamp-Lieb inequality takes the form:
\begin{equation}\label{Brascamp-Lieb ineq sec1}
\Psi(f_1,\cdots,f_m):=\int_{H}\prod_{i=1}^mf_i(B_ix)dx\leq C\prod_{i}\|f_i\|_{L^{p_i}(H_i)},
\end{equation}
where we equip Lebesgue measures on $H$ and $H_i$, and take nonnegative measurable functions $f_i$. This inequality unifies some classical inequalities such as the H\"{o}lder inequality, the Young inequality and the Loomis-Whitney inequality; see Brascamp-Lieb \cite{Brascamp76}, Lieb \cite{Lieb90}, Barthe \cite{barthe} and  Bennett-Carbery-Christ-Tao \cite{bennet1, bennet2}. Lieb \cite{Lieb90} proved that the best constant $C$ in (\ref{Brascamp-Lieb ineq sec1}) is saturated by Gaussian functions. In \cite{bennet1, bennet2}, Bennett-Carbery-Christ-Tao characterized $B_i$ and $p_i$ for which (\ref{Brascamp-Lieb ineq sec1}) holds. A proof of Theorem \ref{main theorem 1} is based on the Brascamp-Lieb inequality for Lorentz spaces which was considered by Christ in \cite{perry}.

Now we review some basic properties of Riesz potentials. For $0<\alpha<n$, the Riesz potentials $I_\alpha$ are defined by
\begin{equation*}
I_\alpha(f)(x)=\frac{\Gamma({n}/{2}-{\alpha}/{2})}{\pi^{n/2}2^\alpha\Gamma(\alpha/2)}
\int_{\mathbb{R}^n}\frac{f(y)}{|x-y|^{n-\alpha}}dy
\end{equation*}
for $f\in C_{0}^\infty$. Then $\|I_\alpha(f)\|_{q}\leq C\|f\|_{p}$ for $1<p, q<\infty$ satisfying $\frac{1}{q}=\frac{1}{p}-\frac{\alpha}{n}$; see Stein \cite{stein}. For the endpoint $p=1$, Stein-Weiss \cite{stein-weissbook} proved that $I_{\alpha}$ is also bounded from $H^1$ to $L^{n/(n-\alpha)}$. By duality, $I_{\alpha}$ has a bounded extension from $L^{n/\alpha}$ to $BMO$. This is also true for $T$ in (\ref{section1. multilinear operators}). Moreover, we shall establish $L^{1}$ estimates which are only  in the multilinear case $k\geq 2$.

\begin{theorem}\label{section1. main theorem 2}
Let $T$ be the multilinear operator as in {\rm(\ref{section1.
multilinear operators})} with all $\alpha_{ij}\geq 0$. Assume that
$1<p_i<\infty$ for $1\leq i \le k$ and $p_{k+1}\in \{1,\infty\}$.
Suppose $\{\alpha_{ij}\}$ and $\{p_i\}$ satisfy the conditions $
(i)$ and $ (ii)$ in Theorem {\rm \ref{main theorem 1}} and $(a)$ of $(iii)$ except $p_{k+1}=1$ and $I=\{k+1\}$, i.e.,
\begin{equation*}
\sum\limits_{I}\frac{1}{p_i}
+\sum\limits_{I}\frac{\alpha_{ij}}{n}<|I|
\end{equation*}
for any nonempty proper subset $I$ of $\{1,2,\cdots,k+1\}$
unless $p_{k+1}=1$ and $I=\{k+1\}$. Then there exists a constant $C$ such that for all $f_i\in C_0^{\infty}$
\begin{eqnarray}
&&\|T(f_1,f_2,\cdots,f_{k})\|_{L^1} \leq
C\prod\limits_{i=1}^{k}\|f_i\|_{p_i}\quad\quad {\textrm{if \quad$p_{k+1}=\infty$}},\nonumber\\
& & \|T(f_1,f_2,\cdots,f_{k})\|_{BMO} \leq
C\prod\limits_{i=1}^{k}\|f_i\|_{p_i}\quad {\textrm{if \quad
$p_{k+1}=1$}}.\nonumber
\end{eqnarray}
Moreover, if in addition $\{p_i\}_{i=1}^k$ satisfies $\sum_{i=1}^k1/p_i\geq 1$ when $p_{k+1}=1$, then the space $BMO$ can be replaced by $L^{\infty}$.
\end{theorem}

Concerning notations, the parameters $\{\alpha_{ij}\}$ are defined to be symmetric. In other words, we assume $\alpha_{ii}=0$ and $\alpha_{ij}=\alpha_{ji}$ for all $1\leq i,j\leq k+1$. For any given subset $J$ of $\{1,2,\cdots,k+1\}$, we use the summation conventions
$\sum_J1/p_i$ and $\sum_J \alpha_{ij}$ to denote $\sum_{i\in
J}1/p_i$ and $\sum_{i< j;i,j\in J}\alpha_{ij}$, respectively. If $J$
consists of a single point, we set $\sum_J\alpha_{ij}=0$. This
convention is also extended to general parameters $\{\gamma_i\}$ and
symmetric $\{\beta_{ij}\}$. The constant $C$ means a positive number
which may vary from place to place. For $A,B\geq 0$, $A\lesssim B$
means $A\leq CB$ for some constant $C>0$. We use $A\wedge
B$ to denote $\min\{A,B\}$ and $|J|$ to denote the cardinality of an index set $J$. For a measurable set $E$ in $\mathbb{R}^n$, $|E|$ is its Lebesgue measure. For two sets $E$ and $F$, $E-F$ means $E\cap F^c$ where $F^c$ is the complement of $F$. For brevity, we use $S=\{1,2,\cdots,k\}$ throughout the paper.

The present paper is organized as follows. The section 2 contains
some previously known results which will be used in
subsequent sections. The necessity part of Theorem \ref{main theorem 1} will be proved in $\S$3. We shall prove the sufficiency of Theorem \ref{main theorem 1} in $\S$4. Local integrability of the correlation kernel $\prod{|x_i-x_j|^{-\alpha_{ij}}}$ will be discussed in $\S$5. In $\S$6, we shall give the proof of Theorem \ref{section1. main theorem 2}. In the appendix, the finiteness of a class of Selberg integrals on the sphere will be proved by invoking the methods in $\S$5.

\section{Preliminaries}  
In this section, we shall present some results to be used in subsequent sections.

For $k+1$ points $x_1$, $x_2$, $\cdots$, $x_{k+1}$ in $\bR^k$, we
say that these points are affinely independent if they do not lie in a hyperplane in $\mathbb{R}^k$ simultaneously.

\begin{theorem}\label{section2. Multilinear interpolation}
Assume that $T$ is a $k-linear$ operator which is bounded from
$L^{p_{1j},1}\times \cdots \times L^{p_{kj},1}$ into
$L^{q_j,\infty}$ for $0<p_{ij}\leq \infty$ and $0<q_j\leq \infty$
with $1\leq i \leq k$ and $1\leq j \leq k+1$. Assume also that these $k+1$ points $(1/p_{1j},\cdots,1/p_{kj})$ are affinely independent
in $\mathbb{R}^{k}$. If there are $k+1$ real numbers $\lambda_i$
with positive $\lambda_1,\cdots,\lambda_{k}$ such that
\begin{equation*}
\frac1{q_j}
=\sum\limits_{i=1}^{k}\frac{\lambda_i}{p_{ij}}+\lambda_{k+1},
\quad 1\leq j\leq k+1,
\end{equation*}
then $T$ is bounded from $L^{p_1,t_1}\times \cdots \times L^{p_{k},t_{k}}$
into $L^{q,t}$ with $(1/p_1,\cdots,1/p_{k},1/q)$ lying in the open
convex hull of the $k+1$ points $(1/p_{1j},\cdots,1/p_{kj}, 1/q_j)$ in $\mathbb{R}^{k+1}$ and $0<t_i,t\leq \infty$ satisfying
\begin{equation*}
\sum\limits_{i=1}^{k}\frac1{t_i}\geq \frac1{t}.
\end{equation*}
\end{theorem}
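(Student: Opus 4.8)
The plan is to follow the standard method of multilinear Marcinkiewicz-type interpolation: decompose each input function dyadically, estimate each dyadic piece against all $k+1$ available weak-type endpoint bounds, and then sum the geometric series after choosing the correct convex combination of exponents. First I would reduce to the case of simple functions (finite linear combinations of characteristic functions of sets of finite measure), since these are dense in every $L^{p_i,t_i}$ with $p_i<\infty$; the case $p_i=\infty$ requires a minor separate remark, using that the bound is to be homogeneous in each $f_i$. Then, by homogeneity in each variable, I would normalize $\|f_i\|_{L^{p_i,t_i}}=1$ for $1\le i\le k$ and write $f_i=\sum_{\mu_i\in\mathbb{Z}} f_i^{\mu_i}$, where $f_i^{\mu_i}=f_i\mathbbm{1}_{\{2^{\mu_i}<|f_i|\le 2^{\mu_i+1}\}}$. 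By sublinearity of $T$ (or, if $T$ is genuinely multilinear, by multilinearity and the triangle inequality applied after expanding the product) one gets $|T(f_1,\dots,f_k)|\le \sum_{\mu_1,\dots,\mu_k} |T(f_1^{\mu_1},\dots,f_k^{\mu_k})|$.

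The heart of the argument is to estimate $\|T(f_1^{\mu_1},\dots,f_k^{\mu_k})\|_{L^{q,\infty}}$ in $k+1$ different ways using the $k+1$ hypotheses, and to interpolate between those estimates via the affine independence. Because the $k+1$ points $P_j=(1/p_{1j},\dots,1/p_{kj})\in\mathbb{R}^k$ are affinely independent, they span a nondegenerate simplex, and the target point $P=(1/p_1,\dots,1/p_k)$ lies in its open interior, so there are unique barycentric coordinates $\theta_1,\dots,\theta_{k+1}>0$ with $\sum_j\theta_j=1$ and $P=\sum_j\theta_j P_j$. The linear relation $1/q_j=\sum_i \lambda_i/p_{ij}+\lambda_{k+1}$ then forces $1/q=\sum_j\theta_j/q_j$ automatically, which is exactly the statement that $(1/p_1,\dots,1/p_k,1/q)$ is in the convex hull of the $(P_j,1/q_j)$. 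Using $\|f_i^{\mu_i}\|_{L^{p_{ij},1}}\lesssim 2^{\mu_i}\,|E_i^{\mu_i}|^{1/p_{ij}}$ where $E_i^{\mu_i}=\{2^{\mu_i}<|f_i|\le 2^{\mu_i+1}\}$, the $j$-th hypothesis gives
\begin{equation*}
\|T(f_1^{\mu_1},\dots,f_k^{\mu_k})\|_{L^{q,\infty}}\lesssim \prod_{i=1}^k 2^{\mu_i}\,|E_i^{\mu_i}|^{1/p_{ij}}.
\end{equation*}
Raising the $j$-th bound to the power $\theta_j$ and multiplying over $j$ yields
\begin{equation*}
\|T(f_1^{\mu_1},\dots,f_k^{\mu_k})\|_{L^{q,\infty}}\lesssim \prod_{i=1}^k 2^{\mu_i}\,|E_i^{\mu_i}|^{1/p_i},
\end{equation*}
and I would then sum over the $\mu_i$. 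The sum splits as a product over $i$ of $\sum_{\mu_i} 2^{\mu_i}|E_i^{\mu_i}|^{1/p_i}$, and by the elementary equivalence of $\|f_i\|_{L^{p_i,1}}$ with $\sum_{\mu_i} 2^{\mu_i}|E_i^{\mu_i}|^{1/p_i}$ (up to constants) each factor is $\lesssim \|f_i\|_{L^{p_i,1}}\le \|f_i\|_{L^{p_i,t_i}}$. This establishes $T:L^{p_1,1}\times\cdots\times L^{p_k,1}\to L^{q,\infty}$.

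To upgrade from this weak-type bound with Lorentz index $1$ on the inputs to the asserted bound $L^{p_1,t_1}\times\cdots\times L^{p_k,t_k}\to L^{q,t}$ with $\sum_i 1/t_i\ge 1/t$, I would run the same dyadic decomposition but keep the Lorentz norms sharp: the key is that one has \emph{room to spare} in the exponents, i.e. one can perturb the barycentric coordinates. Concretely, fix a small $\varepsilon>0$; for each of finitely many sign patterns one perturbs the point $P$ slightly inside the simplex so that the $\mu_i$-sum converges geometrically rather than just absolutely, and this geometric decay is what converts the $L^{p_i,1}$ input norms into $L^{p_i,t_i}$ input norms and the $L^{q,\infty}$ output into $L^{q,t}$ output, with the index arithmetic $\sum_i 1/t_i\ge 1/t$ appearing exactly when one applies H\"older's inequality for series to the product of the $k$ geometric sums. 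The main obstacle, and the step needing the most care, is precisely this last bookkeeping: tracking the Lorentz indices through the dyadic summation and verifying that the condition $\sum_{i=1}^k 1/t_i\ge 1/t$ is what makes the relevant discrete H\"older / Young inequality close. The reduction, the barycentric-coordinate computation, and the weak-type estimate are all routine; isolating the correct geometric decay and matching it against $\sum 1/t_i\ge 1/t$ is where the real work lies. I would also remark that since this interpolation theorem is quoted from the literature (it is the multilinear analogue of the Stein–Weiss/Marcinkiewicz interpolation theorem, cf. the references on multilinear interpolation), one may alternatively simply cite it; but the above is the self-contained route.
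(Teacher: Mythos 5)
The paper does not prove this theorem; immediately after the statement the text reads ``This theorem was previously known; see \cite{janson}'' and also points to a variant in \cite{gra-kal}. Your proposal is therefore an independent attempt at a self-contained proof, and the broad strategy (dyadic slicing, barycentric coordinates for the affinely independent simplex, log-convexity of the weak Lorentz norms) is a standard and reasonable one. In particular the intermediate estimate $\|T(f_1^{\mu_1},\dots,f_k^{\mu_k})\|_{L^{q,\infty}}\lesssim\prod_i 2^{\mu_i}|E_i^{\mu_i}|^{1/p_i}$ is indeed correct: $t^{1/q}g^*(t)=\prod_j\bigl(t^{1/q_j}g^*(t)\bigr)^{\theta_j}$ gives $\|g\|_{L^{q,\infty}}\le\prod_j\|g\|_{L^{q_j,\infty}}^{\theta_j}$ whenever $1/q=\sum_j\theta_j/q_j$ with $\theta_j\ge 0$, $\sum_j\theta_j=1$.

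The genuine gap is in the step immediately after. You sum the estimate over $(\mu_1,\dots,\mu_k)$ as if $L^{q,\infty}$ satisfied a countable triangle inequality, and then ``establish $T:L^{p_1,1}\times\cdots\times L^{p_k,1}\to L^{q,\infty}$''. But $L^{q,\infty}$ is not normable for $q\le 1$ (for $q=1$ this is a classical fact, and for $q<1$ the space is not locally convex), while the hypotheses allow $0<q_j\le\infty$ and hence $0<q\le\infty$. Summing weak quasi-norms to control the weak quasi-norm of the sum is precisely the manoeuvre that a Marcinkiewicz-type argument is designed to avoid: one must instead estimate the distribution function $|\{|T(f_1,\dots,f_k)|>\lambda\}|$ directly for each $\lambda$, allocating $\lambda$ among the dyadic pieces and using, for each $\mu$, the most favorable of the $k+1$ endpoint bounds (with the choice depending on the position of $\mu$ relative to a $\lambda$-dependent threshold; that is where the affine independence does real work). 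This is the entire content of the theorem and is absent from your sketch. Two smaller points: the inequality $\|f_i\|_{L^{p_i,1}}\le\|f_i\|_{L^{p_i,t_i}}$ is reversed for $t_i\ge 1$, since $L^{p,1}\hookrightarrow L^{p,t}$ for $t\ge 1$; and your second paragraph (upgrading from $t_i=1$, $t=\infty$ to general $(t_i,t)$ with $\sum_i 1/t_i\ge 1/t$) is a plan rather than a proof -- ``perturb the barycentric coordinates and apply discrete H\"older'' is the right slogan, but no inequality is actually verified, and that bookkeeping needs the same distribution-function machinery as the weak-type step. As you note at the end, the economical option -- and the one the paper takes -- is simply to cite \cite{janson}.
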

This theorem was previously known; see Janson \cite{janson}. We also refer the reader to a similar variant called the multilinear Marcinkiewicz interpolation in
Grafakos-Kalton \cite{gra-kal}.

The $L^1$ estimate in Theorem \ref{section1. main theorem 2} implies
that $\Lambda(f_1,\cdots,f_{k+1})$ is bounded by a constant multiple
of $\prod_{i=1}^{k+1}\|f_i\|_{p_i}$ with $p_{k+1}=\infty$. Let
$f_{k+1}\equiv 1$. We see that the integral of (\ref{section1.
multilinear operators}) with respect to $x_{k+1}$ is a
generalization of the beta integral with $k=2$. An
induction argument requires that upper bounds of the integral are of the form $\prod_S |x_i-x_j|^{-\beta_{ij}}$ with suitable parameters $\{\beta_{ij}\}$. In other words, we need the following type estimates:
\begin{equation}\label{section2. esti of int with respect to xk+1}
\int_{\mathbb{R}^n}\prod\limits_{i=1}^k
\big|x_i-x_{k+1}\big|^{-\alpha_{i,k+1}}dx_{k+1}\leq
C\prod\limits_S|x_i-x_j|^{-\beta_{ij}}.
\end{equation}
The following theorem gives us the desired estimates.


\begin{theorem}\label{Estimtates of Generalization of Riesz Potential}
Assume $\alpha_1,~\alpha_2,~\cdots,\alpha_k$ satisfy $0<\alpha_i<n$.
If $\sum_{i=1}^k \alpha_i>n$, then the following estimate
\begin{equation*}
\int_{\mathbb{R}^n}\prod\limits_{i=1}^k
\big|t-x_i\big|^{-\alpha_i}dt \leq C \sum\limits_{u=1}^k
\boldsymbol{L}_u(x_1,x_2,\cdots,x_k)
\end{equation*}
 holds for arbitrary $x_1,x_2,\cdots,x_k$ in $\mathbb{R}^n$, where each $\boldsymbol{L}_u$ is defined by
\begin{equation*}
\boldsymbol{L}_u(x_1,x_2,\cdots,x_k)=
\begin{cases}
d_S^{n-\sum_{S}\alpha_i}
\l(\mathlarger{\mathlarger{\chi}}_{\{\sum_{S-\{u\}}\alpha_i<n\}}
+\chi_{\{\sum
_{S-\{u\}}\alpha_i=n\}}\log \frac{2d_S}{d_{S-\{u\}}}\r)\\
d_S^{-\alpha_u}
\int_{\mathbb{R}^n}\prod_{S-\{u\}}|t-x_i|^{-\alpha_i}dt ,\quad if
~~\sum_{S-\{u\}}\alpha_i>n
\end{cases}
\end{equation*}
with the characteristic function $\chi$ being taken relative to
$\alpha_1,\cdots,\alpha_k$. Here $S=\{1,2,\cdots,k\}$ and
$d_I=\sum_I|x_i-x_j|$ for subsets $I$ of $S=\{1,2,\cdots,k\}$ with
$|I|\geq 2$.
\end{theorem}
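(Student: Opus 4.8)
The plan is to estimate the integral $\int_{\mathbb{R}^n}\prod_{i=1}^k|t-x_i|^{-\alpha_i}\,dt$ by a dyadic decomposition of $\mathbb{R}^n$ relative to the configuration of the points $x_1,\dots,x_k$. First I would fix the index $u$ realizing the \emph{smallest} local scale: more precisely, for each point $t$ in the domain, let $x_u$ be (one of) the nearest point(s) among $x_1,\dots,x_k$, i.e. $|t-x_u|=\min_i|t-x_i|$, and split $\mathbb{R}^n=\bigcup_{u=1}^k E_u$ where $E_u$ is the set of such $t$. On $E_u$ one has $|t-x_i|\approx |x_u-x_i|$ for every $i$ with $|x_u-x_i|\gtrsim |t-x_u|$, by the triangle inequality, so the integrand decouples: $\prod_{i=1}^k|t-x_i|^{-\alpha_i}\lesssim |t-x_u|^{-\alpha_u}\prod_{i\neq u}(\text{either }|x_u-x_i|^{-\alpha_i}\text{ or }|t-x_i|^{-\alpha_i})$, the dichotomy depending on whether $|t-x_u|$ is small or comparable to $|x_u-x_i|$.

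Next I would organize $E_u$ into dyadic annuli $A_m=\{t: 2^m\le |t-x_u|<2^{m+1}\}$ and sum. For the annuli with $2^m\le d_{S-\{u\}}$ (the small scales), on such an annulus every $|t-x_i|\approx|x_u-x_i|$ for $i\ne u$, and the integral over $A_m$ of $|t-x_u|^{-\alpha_u}$ contributes $2^{m(n-\alpha_u)}$; summing the geometric series in $m$ (which converges at the top since $\alpha_u<n$) produces the factor $d_{S-\{u\}}^{\,n-\alpha_u}\prod_{i\ne u}|x_u-x_i|^{-\alpha_i}$, and since $d_{S-\{u\}}\le d_S$ and $|x_u-x_i|\le d_S$ this is controlled by $d_S^{\,n-\sum_S\alpha_i}$ up to a constant — giving the first alternative in $\boldsymbol{L}_u$ in the case $\sum_{S-\{u\}}\alpha_i<n$. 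When $\sum_{S-\{u\}}\alpha_i=n$ the geometric summation in the region $d_{S-\{u\}}\le 2^m\le d_S$ degenerates to a sum of $O(1)$ terms, yielding the logarithmic factor $\log(2d_S/d_{S-\{u\}})$. For the annuli with $2^m\gtrsim d_{S-\{u\}}$ (the large scales), all the points $x_i$, $i\ne u$, are essentially merged with $x_u$ at scale $2^m$, so there the integrand is $\approx|t-x_u|^{-\sum_S\alpha_i}$ and, because $\sum_S\alpha_i>n$, the geometric series converges at the \emph{bottom}, again producing $d_{S-\{u\}}^{\,n-\sum_S\alpha_i}\lesssim d_S^{\,n-\sum_S\alpha_i}$ — consistent with the stated bound. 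Finally, in the regime $\sum_{S-\{u\}}\alpha_i>n$ one cannot iterate the small-scale estimate to completion; instead one stops after peeling off $|t-x_u|^{-\alpha_u}\lesssim d_S^{-\alpha_u}$ on the outer annuli and is left with $\int\prod_{S-\{u\}}|t-x_i|^{-\alpha_i}\,dt$, which is exactly the second alternative in the definition of $\boldsymbol{L}_u$.

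The main obstacle I anticipate is bookkeeping the \emph{intermediate} scales correctly and uniformly: on a given annulus $A_m$ the $k-1$ remaining points $x_i$ split into those that are ``close'' ($|x_u-x_i|\lesssim 2^m$, hence $|t-x_i|\approx 2^m$) and those that are ``far'' ($|t-x_i|\approx|x_u-x_i|$), and this partition changes as $m$ varies; one must check that after summing over all $m$ the worst case is always dominated by one of the displayed $\boldsymbol{L}_u$ expressions with constants depending only on $n,k$ and the $\alpha_i$. A clean way to handle this is to further decompose $E_u$ according to which subset $R\subseteq S-\{u\}$ of points is ``far'', reducing on each piece to an integral of the form $\int_{|t-x_u|\lesssim d_R'}|t-x_u|^{-(\alpha_u+\sum_{i\notin R\cup\{u\}}\alpha_i)}\,dt$ times a product of $|x_u-x_i|^{-\alpha_i}$ over $i\in R$, and then to observe that each such piece is bounded by the claimed sum $\sum_u\boldsymbol{L}_u$ by monotonicity of $d_I$ in $I$ and the constraint $\sum_S\alpha_i>n$. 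Care is also needed at the boundary case $\sum_{S-\{u\}}\alpha_i=n$ to produce precisely the logarithm and not a power; this comes out of summing $\sum 2^{m\cdot 0}=\#\{\text{scales between }d_{S-\{u\}}\text{ and }d_S\}\approx\log(2d_S/d_{S-\{u\}})$.
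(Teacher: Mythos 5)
Your dyadic decomposition by nearest point is a genuinely different route from the paper's. The paper fixes a pair attaining $\max_{i<j}|x_i-x_j|$ (WLOG $(x_1,x_k)$, so $d_S\approx|x_1-x_k|$), splits $\mathbb{R}^n$ into $B_{d_S/2}(x_1)$ and its complement, peels off the single far factor on each piece ($|t-x_k|^{-\alpha_k}\lesssim d_S^{-\alpha_k}$ on the ball, $|t-x_1|^{-\alpha_1}\lesssim d_S^{-\alpha_1}$ off it), and reduces to a $(k-1)$-point integral handled by Lemma~\ref{Sec2. estimate of FI on the critical order}; this yields $C(\boldsymbol{L}_1+\boldsymbol{L}_k)$ directly, with the index $u$ in $\boldsymbol{L}_u$ labelling the far point that was peeled off.

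Because $u$ in $\boldsymbol{L}_u$ plays the role of the far point while your $E_u$ is centered at the \emph{nearest} one, the correspondence $E_u\leftrightarrow\boldsymbol{L}_u$ that your notation suggests is a red herring, and this is where the bookkeeping breaks. First, $d_{S-\{u\}}=\sum_{i<j;\,i,j\neq u}|x_i-x_j|$ measures the mutual spread of the points other than $x_u$ and has no general relation to the distances $|x_u-x_i|$ that actually control the close/far dichotomy on $E_u$; the true thresholds are the individual $|x_u-x_i|$, with ``all far'' only below $\min_{i\neq u}|x_u-x_i|$ and ``all merged'' only above $d_S$. The intermediate mixed regime you try to collapse is exactly where the logarithm lives. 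Second, the inequality you invoke to close the small-scale case --- ``since $d_{S-\{u\}}\le d_S$ and $|x_u-x_i|\le d_S$ this is controlled by $d_S^{\,n-\sum_S\alpha_i}$'' --- goes the wrong way: $|x_u-x_i|\le d_S$ gives $|x_u-x_i|^{-\alpha_i}\ge d_S^{-\alpha_i}$, so $d_{S-\{u\}}^{\,n-\alpha_u}\prod_{i\ne u}|x_u-x_i|^{-\alpha_i}$ can vastly exceed $d_S^{\,n-\sum_S\alpha_i}$; the theorem's second branch $d_S^{-\alpha_u}\int\prod_{S-\{u\}}|t-x_i|^{-\alpha_i}\,dt$ exists precisely to absorb that blow-up. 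A one-dimensional check makes both issues concrete: for $k=3$, $x_1=0$, $x_2=1$, $x_3=1+\epsilon$, $\alpha_2+\alpha_3=n=1$, the $\approx\log(1/\epsilon)$ critical scales sit inside $E_2\cup E_3$, yet the term that records that logarithm is $\boldsymbol{L}_1\approx d_S^{-\alpha_1}\log(2d_S/d_{S-\{1\}})$ with $d_{S-\{1\}}=\epsilon$, not $\boldsymbol{L}_2$ or $\boldsymbol{L}_3$; and on $E_2$ the point $x_3$ merges at scale $\epsilon\ll d_{S-\{2\}}\approx 1$, contradicting your stated threshold. The ``far set $R$'' refinement you sketch could in principle repair this, but ``monotonicity of $d_I$ in $I$'' is not enough --- matching the resulting $|x_u-x_i|$-dependent expressions to the specific $d_S$ and $d_{S-\{u\}}$ in $\boldsymbol{L}_u$ is exactly the content still missing.
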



\begin{remark}
There are some explicit formulas concerning the integral in the above theorem. These formulas take the form
$$\int_{\mathbb{R}^n}\prod\limits_{i=1}^k
\big|t-x_i\big|^{-\alpha_i}dt=C\prod_{1\leq i<j\leq
k}|x_i-x_j|^{-\gamma_{ij}}$$ with a constant $C$ independent of
$x_i$. When $k=2$, this is just the $n-$dimension version of the
beta integral formula; see Stein {\rm \cite{stein}}. For $k=3$,
Grafakos and Morpurgo {\rm \cite{grafakos1}} proved the equality
with $\gamma_{ij}=\alpha_i+\alpha_j-n$ when
$\alpha_1+\alpha_2+\alpha_3=2n$. However, its generalization to
other cases is impossible. Recently, Wu and Yan have proved that the above equality cannot be true in the remaining cases {\rm (i)}
$\alpha_1+\alpha_2+\alpha_3\neq 2n$ when $k=3$ and {\rm (ii)} $k\geq 4$; see {\rm \cite{wudi}}.
\end{remark}

\begin{lemma}\label{Sec2. estimate of FI on the critical order}
Let $\alpha_1,~\alpha_2,~\cdots,\alpha_k$ be positive numbers
satisfying $\sum_{i=1}^k\alpha_k=n$ with $k\geq 2$. For $k$ points
$x_1,~x_2,~\cdots,x_k$ in the unit ball
$B_1(0)\subseteq\mathbb{R}^n$, it is true that
\begin{equation}\label{sec2 lemma2.3}
\int_{|t|\leq 2}\prod\limits_{i=1}^k |t-x_i|^{-\alpha_i}dt \leq
C\log\frac{C}{d_S},
\end{equation}
where $C$ depends on $\alpha_1,\cdots,\alpha_k$ and the dimension $n$. Moreover, the reverse inequality is also true for another constant $C$ depending on $n$ and $\alpha_1,\cdots,\alpha_k$.
\end{lemma}

\begin{proof}
Without loss of generality, we may assume $|x_1-x_k|=\max_S|x_i-x_j|>0$. Recall that $d_S=\sum_{i<j}|x_i-x_j|$. By the assumption $x_i\in B_1(0)$, we see that $d_S$ is bounded by a constant $C(k)$ depending only on $k$; for example, one may take $C (k)=k(k-1)$. By translation,
$$
\int_{|t|\leq 1}\prod_{i=1}^k |t-(x_i-x_1)|^{-\alpha_i}dt\leq \int_{|t|\leq 2}\prod_{i=1}^k |t-x_i|^{-\alpha_i}dt\leq
\int_{|t|\leq 3}\prod_{i=1}^k |t-(x_i-x_1)|^{-\alpha_i}dt.
$$
Since $\frac{x_k-x_1}{d_S}$ has length not less than $c_k=\frac2{k(k-1)}$, we deduce from the assumption $\sum\alpha_i=n$ that
\begin{eqnarray*}
\int_{|t|\leq 3d_S}\prod_{i=1}^k |t-(x_i-x_1)|^{-\alpha_i}dt
&\leq&
\int_{|t|\leq 3}|t|^{-\alpha_1}\prod_{i=2}^k |t-(x_i-x_1)/d_S|^{-\alpha_i}dt\\
&=&
\l(\int_{|t|\leq c_k/2}+\int_{c_k/2<|t|\leq 3}\r)|t|^{-\alpha_1}\prod_{i=2}^k |t-(x_i-x_1)/d_S|^{-\alpha_i}dt\\
&\leq& C(n,\alpha_1,\cdots,\alpha_k).
\end{eqnarray*}
In the case $d_S\geq1/3$, it follows that (\ref{sec2 lemma2.3}) and its reverse are true. Now assume $d_S< 1/3$. It is easy to see that
\begin{eqnarray*}
\int_{3d_S\leq |t|\leq 3}\prod\limits_{i=1}^k
|t-(x_i-x_1)|^{-\alpha_i}dt &\approx &
\int_{3d_S\leq |t|\leq 3}|t|^{-n}dt\\
&= & C \log\frac{1}{d_S}.
\end{eqnarray*}
Thus (\ref{sec2 lemma2.3}) is also true if $d_S<1/3$. Now we shall prove its reverse form in the case $d_S<1/3$. Notice that
\begin{eqnarray*}
\int_{|t|\leq 1}\prod_{i=1}^k |t-(x_i-x_1)|^{-\alpha_i}dt
&\geq&
\int_{3d_S\leq|t|\leq 1}\prod_{i=1}^k |t-(x_i-x_1)|^{-\alpha_i}dt\\
&\approx &
\int_{3d_S\leq |t|\leq 3}|t|^{-n}dt\\
&\geq & C \log\frac{1}{d_S}.
\end{eqnarray*}
Thus we complete the proof of the lemma.
\end{proof}

We now turn to the proof of Theorem \ref{Estimtates of
Generalization of Riesz Potential}.\\
\begin{proof}
We assume
$L=\max_S|x_i-x_j|=|x_1-x_k|>0$. Then $L\approx d_S$. We shall estimate the integral over $B_{L/2}(x_1)$ and its complement separately. Observe that
\begin{equation*}
\int_{B_{L/2}(x_1)}\prod\limits_{i=1}^k
\Big|t-x_i\Big|^{-\alpha_i}dt \leq
Cd_S^{-\alpha_k}\int_{B_{L/2}(0)}\prod\limits_{i=1}^{k-1}
\Big|t-x_i+x_1\Big|^{-\alpha_i}dt.
\end{equation*}
In the case $\sum_{i=1}^{k-1}\alpha_i\leq n$, it follows from Lemma \ref{Sec2. estimate of FI on the critical order} that the integral of $\prod_{i=1}^{k-1}|t-x_i+x_1|^{-\alpha_i}$ over $B_{L/2}(0)$ is bounded by a constant multiple of $d_S^{n-\sum_{i=1}^{k-1}\alpha_i}
\l({\chi}_{\{\sum_{S-\{k\}}\alpha_i<n\}} +\chi_{\{\sum
_{S-\{k\}}\alpha_i=n\}}\log \frac{2d_S}{d_{S-\{k\}}}\r)$, where $d_{S-\{k\}}=\sum_{1\leq i<j\leq k-1}|x_i-x_j|$. For $\sum_{i=1}^{k-1}\alpha_i> n$, it is clear that
$$\int_{B_{L/2}(0)}\prod\limits_{i=1}^{k-1}
\Big|t-x_i+x_1\Big|^{-\alpha_i}dt \leq
\int_{\mathbb{R}^n}\prod_{S-\{k\}}|t-x_i|^{-\alpha_i}dt.
$$
Now we treat the integral outside the ball $B_{L/2}(x_1)$. It is
easy to see that
\begin{eqnarray*}
\int_{B_{L/2}^c(x_1)}\prod\limits_{i=1}^k
\Big|t-x_i\Big|^{-\alpha_i}dt
&\leq&
C\int_{L/2\leq |t|\leq
2L}\prod\limits_{i=1}^k\Big|t-(x_i-x_1)\Big|^{-\alpha_i}dt\\
&\leq& Cd_S^{-\alpha_1}\int_{|t|\leq
2L}\prod\limits_{i=2}^k\Big|t-(x_i-x_1)\Big|^{-\alpha_i}dt.
\end{eqnarray*}
The integral of $\prod_{i=2}^k|t-(x_i-x_1)|^{-\alpha_i}$ over
$|t|\leq 2L$ can be treated similarly as above. Combining above
estimates, we conclude that the integral in the theorem is bounded by a constant multiple of $\boldsymbol{L_1}(x_1,\cdots,x_k)+\boldsymbol{L_k}(x_1,\cdots,x_k)$. This completes the proof.
\end{proof}

We shall see that the proof of Theorem \ref{main theorem 1} and Theorem \ref{section1. main theorem 2} is closely related to existence of solutions to systems of linear inequalities. A system of linear inequalities in $\mathbb{R}^n$ is given by
\begin{equation}
{\rm{(II.1)}}
\begin{cases}
f_i(x)=(v_i, x)< a_i, \qquad\quad 1\le i \le m\nonumber\\
f_i(x)=(v_i, x)\le a_i, \quad m+1 \le i \le k \nonumber
\end{cases}
\end{equation}
where $v_i\in \mathbb{R}^n$, $a_i \in \mathbb{R}$ and
$(\cdot,\cdot)$ denotes the standard inner product in
$\mathbb{R}^n$. It is worthwhile noting that we may incorporate an
linear equality into a system of linear inequalities. Indeed, we may
write $g(x)=(v,x)=a$ as an equivalent system of two linear
inequalities given by $g(x)\leq a$ and $-g(x)\leq -a$.
\begin{lemma}\label{section2. existence System of LI}
Suppose that the system $f_i(x)=(v_i, x)\le a_i$ for $m+1\leq i \leq
k$ has at least one solution. Then there exists a solution $x\in
\mathbb{R}^n$ to the system {\rm{(II.1)}} if and only if
\begin{equation*}
\sum\limits_{i=1}^k \lambda_i a_i >0
\end{equation*}
for all nonnegative numbers $\lambda_i$ satisfying $\sum_{i=1}^k
\lambda_i f_i =0$ with at least one $\lambda_i>0$ for $1\leq i \leq
m$.
\end{lemma}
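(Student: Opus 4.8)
The plan is to prove this as a Farkas-type theorem of the alternative, establishing the two implications separately; the standing hypothesis (that the non-strict subsystem $f_i(x)\le a_i$, $m+1\le i\le k$, is feasible) will be needed only for the nontrivial implication. The ``only if'' direction is the easy ``weak duality'' estimate: suppose $x$ solves (II.1) and let $\lambda_i\ge 0$ satisfy $\sum_{i=1}^k\lambda_i v_i=0$ with $\lambda_{i_0}>0$ for some $i_0\le m$. Pairing $\sum_i\lambda_i v_i=0$ with $x$ gives $\sum_{i=1}^k\lambda_i f_i(x)=0$. Since each $\lambda_i\ge 0$ and $f_i(x)\le a_i$ for all $i$ (strictly for $i\le m$), every term satisfies $\lambda_i f_i(x)\le\lambda_i a_i$, and the $i_0$-th term is strict because $f_{i_0}(x)<a_{i_0}$ and $\lambda_{i_0}>0$; summing gives $0=\sum_i\lambda_i f_i(x)<\sum_i\lambda_i a_i$, as desired. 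This uses nothing beyond the definitions.

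For the ``if'' direction I would argue by contraposition: assuming (II.1) has no solution, I would produce nonnegative $\lambda_i$ with $\sum_i\lambda_i v_i=0$, $\lambda_{i_0}>0$ for some $i_0\le m$, and $\sum_i\lambda_i a_i\le 0$. The first step is to homogenize by adjoining a scalar variable $t$: consider, in $(x,t)\in\mathbb{R}^n\times\mathbb{R}$, the system $(v_i,x)-a_it<0$ for $1\le i\le m$, $(v_i,x)-a_it\le 0$ for $m+1\le i\le k$, together with the extra strict inequality $-t<0$. Any solution of this system has $t>0$, so $x/t$ solves (II.1); conversely $(x,1)$ solves it whenever $x$ solves (II.1). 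Hence the homogenized system is infeasible, and a theorem of the alternative for homogeneous mixed strict/non-strict linear systems (Motzkin's transposition theorem --- which, if a self-contained treatment is wanted, follows by Fourier--Motzkin elimination of the variables one at a time) yields nonnegative multipliers $\mu_0,\mu_1,\dots,\mu_k$, not all of $\mu_0,\mu_1,\dots,\mu_m$ zero, such that $\mu_0(-t)+\sum_{i=1}^k\mu_i\big((v_i,x)-a_it\big)$ vanishes identically in $(x,t)$. Reading off the coefficients of $x$ and of $t$ gives $\sum_{i=1}^k\mu_i v_i=0$ and $\sum_{i=1}^k\mu_i a_i=-\mu_0\le 0$.

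It then remains to show that $\mu_i>0$ for at least one $i\le m$, which is the one place where the standing hypothesis is used and, I expect, the only genuinely subtle point. If instead $\mu_1=\dots=\mu_m=0$, then $(\mu_0,\dots,\mu_m)\ne 0$ forces $\mu_0>0$, and the surviving multipliers satisfy $\mu_i\ge 0$ for $m+1\le i\le k$, $\sum_{i=m+1}^k\mu_i v_i=0$ and $\sum_{i=m+1}^k\mu_i a_i=-\mu_0<0$; by the classical Farkas lemma this is a certificate that the subsystem $f_i(x)\le a_i$, $m+1\le i\le k$, has no solution, contradicting the hypothesis. Therefore some $\mu_i$ with $i\le m$ is strictly positive, and setting $\lambda_i:=\mu_i$ gives the desired certificate, completing the contrapositive. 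The main obstacle throughout is the interaction of the strict and non-strict blocks: the plain Farkas lemma applies only to non-strict systems, so the strictness of the first $m$ inequalities must be maintained (this is exactly the purpose of the auxiliary strict constraint $t>0$, equivalently of the Motzkin refinement of Farkas), and the degenerate multiplier supported entirely on $\{m+1,\dots,k\}$ must be ruled out, which is precisely what the feasibility of that block accomplishes.
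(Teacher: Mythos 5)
Your proposal is correct. Note that the paper does not actually give a proof of this lemma: it only cites Rockafellar (\emph{Convex Analysis}, \S 22, p.~198) and Fan, remarking that the result can be obtained ``by a simple method using the concept of elementary vectors of a subspace of $\mathbb{R}^n$.'' Your argument is therefore not being compared against a proof in the paper but against a pointer to the literature, and it supplies a complete and self-contained substitute. The route is different in flavor: Rockafellar's treatment is organized around the combinatorial structure of elementary vectors (minimal-support vectors) of the subspace orthogonal to the constraint normals, which gives a clean description of which strict inequalities can be simultaneously active and tends to generalize cleanly to the face structure of polyhedra; your route instead homogenizes by adjoining a scalar $t>0$ and invokes Motzkin's transposition theorem (reducible to Fourier--Motzkin elimination), which is a more hands-on ``theorem of the alternative'' argument. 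Both are faces of the same finite-dimensional LP duality. Your handling of the two delicate points --- keeping the strict block alive via the auxiliary constraint $-t<0$ so that Motzkin forces a nonzero multiplier on $\{0,1,\dots,m\}$, and then using the standing feasibility of the non-strict block together with Farkas to rule out the degenerate case where the only positive multiplier is $\mu_0$ --- is exactly what is needed, and the ``only if'' direction is the straightforward weak-duality pairing, also correct.
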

This lemma is a special case of the existence theorem of systems of
convex inequalities in $\mathbb{R}^n$. However, it can be proved by
a simple method using the concept of elementary vectors of an
subspace of $\mathbb{R}^n$. We refer the reader to $\S$22 (Page 198) in Rockafellar \cite{rockafellar}; see also \cite{fanky} for its
extension to general vector spaces.

\section{Necessity Part of Theorem \ref{main theorem 1}}
In this section, we shall prove the necessity of conditions $(i)$,
$(ii)$ and $(iii)$ in Theorem \ref{main theorem 1}. Indeed the second author obtained these necessary conditions in his thesis \cite{wudi}. We present here the details of proof for convenience of the reader.

Assume the inequality (\ref{inequality of multilinear
fractional functionals}) is true for some constant $C$ independent of $f_i$. We replace $f_i$ by its dilation
$\delta_\lambda(f_i)(x)=f_i(\lambda x)$ for $\lambda>0$. By a change
of variables, we see that $(i)$ must hold by letting
$\lambda\rightarrow 0$ and $\lambda\rightarrow \infty$.

To show the necessity of (ii), we take all $f_i=\chi_{B_1(0)}$. We
shall replace $\{1,2,\cdots,k+1\}$ by $S=\{1,2,\cdots,k\}$. We claim
that if there were a subset $J\subset S$ with $|J|\geq 2$ such that
$\sum_J \alpha_{ij}\geq (|J|-1)n, $ we would obtain $
\int_{(B_1(0))^{|J|}}\prod\limits_J|x_i-x_j|^{-\alpha_{ij}}dV_J
=\infty, $ where $B_1(0)$ is the unit ball centered at the origin in
$\mathbb{R}^n$ and $dV_J$ is the product Lebesgue measure
$\prod_Jdx_{i}$. Since $\alpha_{ij}$ are nonnegative, the argument is essentially the same for different
$J's$. Assume $J=S$. Then
$\int_{(B_1(0))^{k}}\prod\limits_S|x_i-x_j|^{-\alpha_{ij}}dV_S$ is
equal to
\begin{eqnarray*}
& &\int_{B_1(0)} \left(\int_{(B_{1}(x_1))^{k-1}}
\prod\limits_{i=2}^{k}|x_i|^{-\alpha_{1i}}
\prod\limits_{2\leq i<j \leq k}|x_i-x_j|^{-\alpha_{ij}}dx_2\cdots dx_k\right)dx_1\\
&\geq& C\int_{(B_{1/2}(0))^{k-1}}
\prod\limits_{i=2}^{k}|x_i|^{-\alpha_{1i}}
\prod\limits_{2\leq i<j \leq k}|x_i-x_j|^{-\alpha_{ij}}dx_2\cdots dx_k.
\end{eqnarray*}
Write $X=(x_2,\cdots,x_k)\in \mathbb{R}^{n(k-1)}$. Let
$B_{r}(0_{m})$ be the unit ball centered at the origin in
$\mathbb{R}^m$ with radius $r>0$. It is clear that
$B_{1/2}(0_{n(k-1)})$ is contained in $(B_{1/2}(0_{n}))^{k-1}$. We
may regard the integrand $\prod_{i=2}^{k}|x_i|^{-\alpha_{1i}}
\prod_{2\leq i<j \leq k}|x_i-x_j|^{-\alpha_{ij}}$ as a homogeneous
function of degree $-\sum_S\alpha_{ij}$ in $\mathbb{R}^{n(k-1)}$.
Its integral over a ball centered at the origin in
$\mathbb{R}^{n(k-1)}$ is infinite since its order of homogeneity is
less than or equal to $-(k-1)n$.

It remains to prove the necessity of (iii). We first prove that for any $J\subseteq S$ with $|J|\geq 2$,
\begin{equation}\label{section3. inequality (iii)}
\sum_J\alpha_{ij}+\sum_J\frac{n}{p_i}\leq |J|n.
\end{equation}
Assume the converse holds, i.e., there exists some $J_0\subseteq S$ such that $|J|\geq 2$ and the above inequality is not true for $J_0$. We choose $0<\lambda_i<n/p_i$ for each $i\in J_0$, such that
\begin{equation*}
\sum_{J_0}\alpha_{ij}+\sum_{J_0}\lambda_i=|J_0|n.
\end{equation*}
Let $f_i(y)=\chi_{\{|y|\leq 1\}}|y|^{-\lambda_i}$ for each $i\in J_0$ and $f_i$ be the characteristic function of the unit ball $B_1(0)$ for $i\notin J_0$. It follows that
\begin{equation*}
\Lambda(f_1,\cdots,f_{k+1})\geq C\int_{(B_1(0))^{|J_0|}}
\prod\limits_{J_0}|x_i|^{-\lambda_i}
\prod\limits_{J_0}|x_i-x_j|^{-\alpha_{ij}}dV_{J_0}=\infty
\end{equation*}
since $\sum_{J_0}\alpha_{ij}+\sum_{J_0}\lambda_i=|J_0|n$.

If for some proper subset $J$ of $\{1,2,\cdots,k+1\}$ containing at least two elements such that (\ref{section3. inequality (iii)}) becomes an equality, we claim that $\sum_{J^c}1/p_i\geq 1$. Assume $\sum_{J^c}1/p_i<1$. We can choose $0<\lambda_i<p_i$ such that $\sum_{J^c}1/\lambda_i<1$. Let $f_i(y)=|y|^{-n/p_i}\chi_{\{|y|>2\}}(\log |y|)^{-1/\lambda_i}$ for each $i\in J^c$ and $f_i=\chi_{B_1(0)}$ for $i\in J$. Substituting these functions into the functional $\Lambda$, we have
\begin{eqnarray}\label{3.2}
\Lambda(f_1,\cdots,f_{k+1})
&\geq&
 C\int_{(B_2^c(0))^{|J^c|}}
\prod\limits_{J^c}|f_i(x_i)|
\prod\limits_{J^c}|x_i|^{-\beta_i}
\prod\limits_{J^c}|x_i-x_j|^{-\alpha_{ij}}dV_{J^c}\\
&=&
C\int_{(B_2^c(0))^{|J^c|}}
\prod\limits_{J^c}|x_i|^{-n/p_i}(\log |x_i|)^{-1/\lambda_i}
\prod\limits_{J^c}|x_i|^{-\beta_i}
\prod\limits_{J^c}|x_i-x_j|^{-\alpha_{ij}}dV_{J^c}\nonumber,
\end{eqnarray}
where $\beta_i=\sum_{u\in J}\alpha_{iu} $ for each
$i \in J^c$.

Since the conditions (i), (ii) and (iii) in Theorem \ref{main theorem 1}
are invariant under the permutation group on $k+1$ letters, we may assume
$J^c=\{1,2,\cdots,l\}$ with $1\leq l \leq k$. If $l=1$, it follows
immediately from the fact $n/p_1+\beta_1=n$ that the right side integral
in (\ref{3.2}) is infinite since $1/\lambda_1<1.$ Now we treat the case
$l>1$.
Replacing the region $(B_2^c(0))^l$ by its proper subset $\Omega_l$
consisting of all points $(x_1,\cdots,x_l)$ such that $x_1\in B_2^c(0)$
and  $|x_i|\geq 2|x_{i-1}|$ for $2\leq i \leq l$, we obtain
\begin{eqnarray*}
&&\int_{|x_l|\geq 2|x_{l-1}|}
|x_l|^{-n/p_l}(\log |x_l|)^{-1/\lambda_l}|x_l|^{-\beta_l}
|x_l|^{-\sum_{i=1}^{l-1}\alpha_{il}}dx_l
\geq C|x_{l-1}|^{-\xi_l}(\log |x_{l-1}|)^{-1/\lambda_l},
\end{eqnarray*}
where $\beta_l=\sum_{i=l+1}^{k+1}\alpha_{il}$ and $\xi_l=n/p_l+\sum_{i=1}^{k+1}\alpha_{il}-n$. Substituting this estimate into the integral in (\ref{3.2}), we see that $\Lambda(f_1,\cdots,f_{k+1})$ is not less than a constant multiple of
\begin{equation*}
\int_{\Omega_{l-1}}
\prod\limits_{i=1}^{l-1}|x_i|^{-n/p_i-\boldsymbol{\delta}_{i}^{(l-1)}\xi_l}(\log |x_i|)^{-1/\lambda_i-\boldsymbol{\delta}_{i}^{(l-1)}/\lambda_l}
\prod\limits_{i=1}^{l-1}|x_i|^{-\beta_i}
\prod\limits_{1\leq i<j\leq l-1}
|x_i-x_j|^{-\alpha_{ij}}dx_1\cdots dx_{l-1},
\end{equation*}
where $\Omega_{l-1}$ is the region $x_1\in B_2^c(0)$ and  $|x_i|\geq 2|x_{i-1}|$ for $2\leq i \leq l-1$, $\boldsymbol{\delta}_{i}^{j}$ equals one if $i=j$ and zero otherwise. Repeating the process with $l-1$ steps, we obtain the resulting estimate
\begin{eqnarray*}\label{Resulting estimate 3.3}
\Lambda(f_1,\cdots,f_{k+1})\geq C \int_{|x_1|\geq 2}|x_1|^{-\xi_1}(\log |x_1|)^{-\sum_1^l 1/\lambda_i}dx_1
\end{eqnarray*}
with $$\xi_1=\sum_{i=1}^l\frac{n}{p_i}+\sum_{1\leq i<j \leq l}\alpha_{ij}+\sum_{i=1}^l\sum_{j=l+1}^{k+1}
\alpha_{ij}-(l-1)n=n$$
by the condition (i). Recall that $\sum_{i=1}^{l}1/\lambda_i$ is less than 1. The above integral is infinite. This contradicts the boundedness of $\Lambda$. Hence $\sum_{J^c}1/p_i\geq 1$.

It remains to show that certain additional requirements are
necessary in Theorem \ref{main theorem 1} when $(iii)$ contains
equalities for some proper subsets $I$ of $\{1,2,\cdots,k+1\}$. This means that the Brascamp-Lieb datum corresponding to $\Lambda$ has critical subspaces; see Bennett-Carbery-Christ-Tao \cite{bennet1,bennet2}.
\begin{theorem}\label{sec 3. weighted fracInt Operator}
Assume $1\leq p_i \leq \infty$ and $\alpha_{ij}\geq 0$ for $1\leq i,
j \leq k+1$. Suppose the multilinear functional $\Lambda$ given by
${\rm (\ref{multilinear functional})}$ satisfies
\begin{equation*}
|\Lambda(f_1,f_2,\cdots,f_{k+1})|\leq C\prod\limits_{i=1}^{k+1} \|f_i\|_{p_i}
\end{equation*}
for some $C$ independent of $f_i$. If $J_0$ is a nonempty proper subset
of $\{1,2,\cdots,k+1\}$ satisfying
\begin{equation*}
\sum_{J_0}\frac{1}{p_i}+\sum_{J_0} \frac{\alpha_{ij}}{n}=|J_0|,
\end{equation*}
then we have
\begin{equation}\label{section3. Type I of weighted FI}
\int_{(\mathbb{R}^{n})^{|J_0|}}\prod\limits_{J_0}|f_i(x_i)|
\prod\limits_{J_0}|x_i-x_j|^{-\alpha_{ij}}dV_{J_0}
\leq
C\prod\limits_{J_0}\|f_i\|_{p_i}
\end{equation}
\begin{equation}\label{section3. Type II of weighted FI}
and \quad \int_{(\mathbb{R}^{n})^{|J_0^c|}}\prod\limits_{{J_0}^{c}}|f_i(x_i)|
\prod\limits_{{J_0}^{c}}|x_i-x_j|^{-\alpha_{ij}}
\prod\limits_{{J_0}^{c}}|x_i|^{-\beta_i}dV_{J_0^c}
\leq C\prod\limits_{{J_0}^c}\|f_i\|_{p_i}
\end{equation}
with $\beta_i=\sum_{u\in J_0} \alpha_{iu}$ for each $i\in {J_0}^c$
and both constants $C$ independent of $f_i$. Moreover, it is also
true that
\begin{equation}\label{section3. II inequality (iii)}
\sum\limits_{J}\l(\frac1{p_i}+\frac {\beta_i}{n}\r)
+\sum\limits_{J}\frac{\alpha_{ij}}{n}\leq |J|
\end{equation}
for all nonempty subsets $J$ of $J_0^c$. Conversely, {\rm(\ref{section3. Type I of weighted FI})} and {\rm(\ref{section3. Type II of weighted FI})} imply the boundedness of $\Lambda$.
\end{theorem}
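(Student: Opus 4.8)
The plan is to exploit the scale invariance hidden in the hypothesis $\sum_{J_0}\frac1{p_i}+\sum_{J_0}\frac{\alpha_{ij}}{n}=|J_0|$ by a dilation (blow-up) argument applied to $\Lambda$. Since all the relevant quantities are symmetric in the indices, I may relabel so that $J_0=\{1,\dots,m\}$ and $J_0^c=\{m+1,\dots,k+1\}$, and by replacing each $f_i$ with $|f_i|$ I may take all $f_i\ge0$. In (\ref{multilinear functional}) keep $f_i$ for $i\le m$ and replace $f_i$ by the dilate $f_i^{(\lambda)}(x)=f_i(\lambda x)$ for $i>m$, so that $\|f_i^{(\lambda)}\|_{p_i}=\lambda^{-n/p_i}\|f_i\|_{p_i}$. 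Substituting $x_i=y_i/\lambda$ for $i>m$, the Jacobian contributes $\lambda^{-n|J_0^c|}$, the factors $|x_i-x_j|^{-\alpha_{ij}}$ with $i,j\in J_0^c$ contribute $\lambda^{\sum_{J_0^c}\alpha_{ij}}$, and each cross factor becomes $|x_i-y_j/\lambda|^{-\alpha_{ij}}=\lambda^{\alpha_{ij}}|\lambda x_i-y_j|^{-\alpha_{ij}}$, producing an extra $\lambda^{\sum_{i\in J_0,\,j\in J_0^c}\alpha_{ij}}$. A short computation using condition (i) of Theorem~\ref{main theorem 1} together with the hypothesis on $J_0$ shows that the total power of $\lambda$ pulled out is exactly $-n\sum_{J_0^c}1/p_i$, which matches the scaling of the right-hand side, $\prod_{i\le m}\|f_i\|_{p_i}\prod_{i>m}\|f_i^{(\lambda)}\|_{p_i}=\lambda^{-n\sum_{J_0^c}1/p_i}\prod_{i=1}^{k+1}\|f_i\|_{p_i}$. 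As $\lambda\to0^+$ the remaining integrand converges almost everywhere, since $|\lambda x_i-y_j|^{-\alpha_{ij}}\to|y_j|^{-\alpha_{ij}}$ for $y_j\ne0$ and $\sum_{u\in J_0}\alpha_{ju}=\beta_j$; hence, by Fatou's lemma and Tonelli's theorem, passing to the limit yields
\[
A(f_1,\dots,f_m)\,B(f_{m+1},\dots,f_{k+1})\le C\prod_{i=1}^{k+1}\|f_i\|_{p_i},
\]
where $A$ and $B$ are the left-hand sides of (\ref{section3. Type I of weighted FI}) and (\ref{section3. Type II of weighted FI}) respectively; the limiting integral factors as a product because after the limit the two groups of variables no longer interact.

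To deduce (\ref{section3. Type I of weighted FI}) and (\ref{section3. Type II of weighted FI}) from this product inequality, it suffices to exhibit, for each of $A$ and $B$, one input making the value finite and strictly positive. Taking each $f_i$ to be the indicator (or a smooth bump) supported on a small ball $B_\varepsilon(z_i)$, with the centres $z_i$ pairwise far apart and, for the functions indexed by $J_0^c$, with $|z_i|\approx1$ in order to control the weights $|y_i|^{-\beta_i}$, makes every kernel factor comparable to a positive constant on the product of supports, so the corresponding functional is $\approx\prod|B_\varepsilon|\in(0,\infty)$. Fixing such $(f_{m+1},\dots,f_{k+1})$ turns the product inequality into (\ref{section3. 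Type I of weighted FI}), and fixing such $(f_1,\dots,f_m)$ turns it into (\ref{section3. Type II of weighted FI}); passing from these test functions to all $f_i\in L^{p_i}$ is a routine monotone-approximation argument.

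Finally, (\ref{section3. II inequality (iii)}) follows from (\ref{section3. Type II of weighted FI}) by exactly the divergence argument already used in this section to establish (\ref{section3. inequality (iii)}). Assume (\ref{section3. II inequality (iii)}) fails for some nonempty $J\subseteq J_0^c$. Choose $\lambda_i\ge0$ such that $|y|^{-\lambda_i}\chi_{\{|y|\le1\}}\in L^{p_i}$ — that is, $\lambda_i<n/p_i$, so $\lambda_i=0$ when $p_i=\infty$ — and $\sum_J(\lambda_i+\beta_i)+\sum_J\alpha_{ij}\ge n|J|$; this is possible precisely because $\sum_J(1/p_i+\beta_i/n)+\sum_J\alpha_{ij}/n>|J|$. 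Put $f_i(y)=|y|^{-\lambda_i}\chi_{\{|y|\le1\}}$ for $i\in J$ and $f_i=\chi_{B_1(0)}$ for $i\in J_0^c\setminus J$, and restrict the integral in (\ref{section3. Type II of weighted FI}) to the region where every $|y_i|\le1$. There, all kernel factors involving an index outside $J$ remain bounded below by positive constants, so the left-hand side dominates a constant multiple of $\int_{(B_1(0))^{|J|}}\prod_J|y_i|^{-\lambda_i-\beta_i}\prod_J|y_i-y_j|^{-\alpha_{ij}}\,dV_J$. This integrand is a nonnegative function on $\mathbb{R}^{n|J|}$ homogeneous of degree $\le-n|J|$, so its integral over any ball about the origin is infinite, contradicting (\ref{section3. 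Type II of weighted FI}). (When $|J|=1$ the convention $\sum_J\alpha_{ij}=0$ applies and this reduces to the one-variable estimate.)

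The step I expect to be the main obstacle is the limiting argument of the first paragraph: one has to make sure the dilated functions are admissible inputs for the bound assumed on $\Lambda$, justify interchanging the limit with the integral (Fatou's lemma suffices, because the integrands are nonnegative), and, above all, carry out correctly the bookkeeping of the powers of $\lambda$ — this is where condition (i) and the hypothesis on $J_0$ are used, and it is exactly what makes the two sides scale identically. Once the product inequality is in hand, the decoupling and the divergence argument are comparatively routine.
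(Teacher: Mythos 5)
Your argument is correct, and it rests on the same underlying scaling idea as the paper's proof, but it is packaged somewhat differently. The paper substitutes specific concentrated test functions $f_i=\epsilon^{-n/p_i}\chi_{\{|y|<\epsilon/2\}}$ for the $J_0$-group (respectively the annular $\epsilon^{-n/p_i}\chi_{\{\epsilon<|y|<2\epsilon\}}$ for the $J_0^c$-group), observes that the corresponding inner integral has a positive $\epsilon$-independent lower bound thanks to the $J_0$-equality, restricts the remaining integration to a ball complement to control the cross kernels, and then lets $\epsilon\to 0$ (resp.\ $\epsilon\to\infty$) to recover (\ref{section3. Type II of weighted FI}) and (\ref{section3. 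Type I of weighted FI}) as two separate limits. You instead keep all inputs general, dilate the $J_0^c$-block, verify via condition (i) together with the $J_0$-equality that the dilation leaves the normalized quantity scale-invariant, and apply Fatou as $\lambda\to 0^+$ to obtain the factorized bound $A(f_1,\dots,f_m)\,B(f_{m+1},\dots,f_{k+1})\le C\prod_i\|f_i\|_{p_i}$ in one stroke; both (\ref{section3. Type I of weighted FI}) and (\ref{section3. Type II of weighted FI}) then follow by fixing one block with bump functions making the corresponding factor finite and nonzero. Your version buys a more symmetric presentation (one limit gives both inequalities and the decoupling is explicit), at the cost of having to argue, correctly as you do, that the chosen bump configuration yields a finite positive factor; the paper's version avoids that step because the test-function normalization makes the stripped factor manifestly a fixed constant. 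The divergence argument you give for (\ref{section3. II inequality (iii)}) is the same homogeneity argument the paper invokes, and is fine, including the caveat for $p_i=\infty$.

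One small point worth making explicit if you were to write this up: condition (i) of Theorem~\ref{main theorem 1} is not listed among the hypotheses of Theorem~\ref{sec 3. weighted fracInt Operator}, so you should record that it is forced by the assumed boundedness of $\Lambda$ (this is the dilation remark at the start of \S3) before you use it in the bookkeeping of powers of $\lambda$.
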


\begin{proof}
For each $\epsilon>0$, let $f_i=\epsilon^{-n/p_i}\chi_{\{|y|<\epsilon/2\}}$ for each $i\in J_0$. By $\sum_{J_0}1/p_i+\sum_{J_0}\alpha_{ij}/n=|J_0|$, we have
\begin{equation*}
\int_{(\mathbb{R}^{n})^{|J_0|}}\prod\limits_{{J_0}}|f_i(x_i)|
\prod\limits_{{J_0}}|x_i-x_j|^{-\alpha_{ij}}dV_{J_0}
\geq C,
\end{equation*}
where $C$ is a constant depending only on $n$ and $|J_0|$ but not on $\epsilon$. For given nonnegative $f_i\in L^{p_i}$ with $i\in J_0^c$, it follows from the boundedness of $\Lambda$ that
\begin{equation*}
\int_{(B_{\epsilon}^c(0))^{|J_0^c|}}\prod\limits_{{J_0}^{c}}|f_i(x_i)|
\prod\limits_{{J_0}^{c}}|x_i-x_j|^{-\alpha_{ij}}
\prod\limits_{{J_0}^{c}}|x_i|^{-\beta_i}dV_{J_0^c}
\leq C\prod\limits_{{J_0}^c}\|f_i\|_{p_i}
\end{equation*}
and this inequality becomes (\ref{section3. Type II of weighted FI}) by letting $\epsilon\rightarrow 0$. The first inequality (\ref{section3. Type I of weighted FI}) can be obtained by a similar argument. Indeed, put $f_i=\epsilon^{-n/p_i}\chi_{\{\epsilon<|y|<2\epsilon\}}$ for each $i\in J_0^c$. For nonnegative functions $f_i$ with $i\in J_0$, we also have
\begin{equation*}
\int_{(\mathbb{R}^n)^{|J_0^c|}}\prod\limits_{{J_0}^{c}}|f_i(x_i)|
\prod\limits_{{J_0}^{c}}|x_i-x_j|^{-\alpha_{ij}}
\left(\prod\limits_{i\in J_0^{c}}
\prod\limits_{j\in J_0}|x_i-x_j|^{-\alpha_{ij}}\right)dV_{J_0^c}
\geq C
\end{equation*}
for $|x_j|<\epsilon/2$ with $j\in J_0$, where the constant $C$ is independent of $\epsilon$ and $x_j\in B_{\epsilon/2}(0)$ with $j\in J_0$. Similarly, we then obtain
\begin{equation*}
\int_{(B_{\epsilon/2}(0))^{|J_0|}}\prod\limits_{{J_0}}|f_i(x_i)|
\prod\limits_{{J_0}}|x_i-x_j|^{-\alpha_{ij}}dV_{J_0}
\leq C\prod\limits_{J_0}\|f_i\|_{p_i}
\end{equation*}
where $f_i\in L^{p_i}$ with $i\in J_0$ and the constant $C$ is independent of $f_i$ and $\epsilon$. By letting $\epsilon\rightarrow \infty$, the desired inequality follows. The inequalities in (\ref{section3. II inequality (iii)}) can be proved similarly as (\ref{section3. inequality (iii)}) by invoking
(\ref{section3. Type II of weighted FI}). We omit the details here.
If {\rm(\ref{section3. Type I of weighted FI})} and {\rm(\ref{section3. Type II of weighted FI})} are true, we first consider the integral in (\ref{multilinear functional}) with respect to $dV_{J_0^c}=\prod_{i\in J_0^c}dx_i$. Though the integrand depends on $x_i$ with $i\in J_0$, it is bounded by a constant multiple of $\prod_{J_0^c}\|f_i\|_{p_i}$. Hence the boundedness of $\Lambda$ follows.
\end{proof}

Combining above results, the proof of the necessity part of Theorem
\ref{main theorem 1} is complete.\\

\section{Sufficiency Part of Theorem \ref{main theorem 1}}
In this section, we shall prove the sufficiency part of Theorem \ref{main theorem 1}. The argument is similar to Christ's proof of the Brascamp-Lieb inequality for Lorentz spaces; see Perry \cite{perry}. Our main tool is the powerful Brascamp-Lieb inequality. For the rank-one case, Barthe \cite{barthe} applied Lieb's theorem \cite{Lieb90} and the Cauchy-Binet formula to obtain the necessary and sufficient condition for which (\ref{Brascamp-Lieb ineq sec1}) holds. Bennett-Carbery-Christ-Tao \cite{bennet1, bennet2} proved that the general Brascamp-Lieb inequality (\ref{Brascamp-Lieb ineq sec1}) is true for some $C<\infty$ if and only if
\begin{equation}\label{scaling condition}
\dim H=\sum_{i=1}^{m}\frac1{p_i}\dim H_i
\end{equation}
and for all subspaces $V$ of $H$
\begin{equation}\label{dimension condition}
\dim V \leq \sum_{i=1}^{m}\frac1{p_i}\dim (B_i V).
\end{equation}
It is clear that (\ref{dimension condition}) consists of finitely many inequalities. For the rank-one case, Barthe \cite{barthe} characterized the extreme points of $\{1/p_i\}$ for which the Brascamp-Lieb inequality holds. Valdimarsson \cite{S.i.valdimarsson} considered the corank-one and certain mixed rank cases, and constructed a procedure to find the full list of dimension inequalities in (\ref{dimension condition}).\\

 Consider the following multilinear functional
\begin{equation}\label{Brascamp-Lieb Psi}
\Psi(\{f_i\}_{i=1}^N;\{g_{ij}\}_{1\leq i<j \leq N}):=\int_{\mathbb{R}^N}\prod_{i=1}^Nf_i(x_i)
\prod_{1\leq i<j \leq N}g_{ij}(x_i-x_j)dx_1dx_2\cdots dx_N.
\end{equation}
Define linear transformations:
\begin{eqnarray*}
& &B_i: ~\mathbb{R}^N \rightarrow \mathbb{R},\qquad B_ix=x_i,\qquad\qquad 1\leq i \leq N,\\
& &B_{ij}: \mathbb{R}^N \rightarrow \mathbb{R},\qquad B_{ij}x=x_i-x_j,
\quad 1\leq i,j \leq N,
\end{eqnarray*}
where $x=(x_1,x_2,\cdots,x_N)\in\mathbb{R}^N$.
For $p_i,p_{ij}\in [1,\infty]$, we want to characterize these exponents such that there exists a constant $C$ satisfying
\begin{equation}\label{Brascamp-Lieb ineq sec4}
|\Psi(\{f_i\};\{g_{ij}\})|\leq
C\prod_{i=1}^{N}\|f_i\|_{L^{p_i}(\mathbb{R})}
\prod_{1\leq i<j\leq N}\|g_{ij}\|_{L^{p_{ij}}(\mathbb{R})}.
\end{equation}
By (\ref{scaling condition}) and (\ref{dimension condition}), this inequality is true if and only if
\begin{equation}\label{scaling condition2}
\sum_{i=1}^N\frac1{p_i}+\sum_{i<j}\frac1{p_{ij}}=N
\end{equation}
and for all subspaces $V\subseteq \mathbb{R}^N$
\begin{equation}\label{dimension condition2}
\dim V\leq \sum_{i=1}^N\frac1{p_i}\dim (B_iV)
+\sum_{i<j}\frac1{p_{ij}}\dim (B_{ij} V).
\end{equation}

For any fixed subspace $V$ of $\mathbb{R}^N$, we define $X$ to be the subspace of linear transformations from $\mathbb{R}^N$ into $\mathbb{R}$ given by
$$X=span \l\{B_i,B_{jk}:V\subseteq \ker(B_i)\bigcap \ker(B_{jk})\r\}.$$
It is clear that $X$ depends on the given subspace $V$. This fact will be used throughout this section. Take the subspace $W\subseteq \mathbb{R}^N$ as follows.
$$W=\l[\bigcap_{B_i\in X}\ker(B_i)\r]\bigcap \l[\bigcap_{B_{jk}\in X}\ker (B_{jk})\r].$$
Then it is easy to see that
$$\sum_{i=1}^N\frac1{p_i}\dim (B_iV)
+\sum_{i<j}\frac1{p_{ij}}\dim (B_{ij} V)
=
\sum_{i=1}^N\frac1{p_i}\dim (B_iW)
+\sum_{i<j}\frac1{p_{ij}}\dim (B_{ij} W).$$
Since $V\subseteq W$, we have $\dim V\leq \dim W$. The inequality (\ref{dimension condition2}) for $V$ is true provided that it holds for $W$. Since $W$ is the intersection of null spaces of $B_i$ and $B_{ij}$ in $X$, we have $\dim W=N-\dim X$. The inequality (\ref{dimension condition2}) with $W$ in place of $V$ becomes
$$N-\dim X \leq \sum_{i=1}^N\frac1{p_i}-\sum_{B_i\in X}\frac1{p_i}
+\sum_{1\leq j<k\leq N}\frac1{p_{jk}}-\sum_{B_{jk}\in X}\frac1{p_{jk}}.
$$
By the scaling condition (\ref{scaling condition2}), we see that (\ref{dimension condition2}) is equivalent to
\begin{equation} \label{dimension condition3}
\dim X\geq \sum_{B_i\in X}\frac1{p_i}+\sum_{B_{jk}\in X}\frac1{p_{jk}}.
\end{equation}
Now we can write the full list of possible inequalities in (\ref{dimension condition2}). This contains precisely two type conditions:

$~~$ (a) For any subset $J\subseteq \{1,2,\cdots,N\}$ with $|J|\geq 2$,
$$\sum_{i\in J}\frac1{p_i}+\sum_{j<k;j,k\in J}\frac1{p_{jk}}\leq |J|,$$
where we take $X=span\{B_i,B_{jk}:~i\in J,~j,k\in J\}$ in (\ref{dimension condition3}).\\

$~~$ (b) For any subset $J\subseteq \{1,2,\cdots,N\}$ of cardinality $|J|\geq 2$,
$$\sum_{j<k;j,k\in J}\frac1{p_{jk}}\leq |J|-1,$$
where $X=span\{B_{jk}:~j,k\in J\}$ in (\ref{dimension condition3}).

Now we turn to prove that conditions (a) and (b) imply (\ref{dimension condition2}). For any subspace $V\subseteq \mathbb{R}^N$, let $X$ be defined as above. Now we define an equivalence relation for the index set $\{1,2,\cdots,N\}$. For arbitrary $i,j$, we define $i\sim j$ if $B_{ij}=B_i-B_j\in X$. Here it is obvious that $B_{ii}=0$, $B_{ij}+B_{ji}=0$ and $B_{ij}+B_{jk}=B_{ik}$. It follows that $\sim$ is indeed an equivalence relation. Now we decompose the index set into a disjoint union of subsets, with each subset being an equivalence class, i.e.,
$$\{1,2,\cdots,N\}=\bigcup_{i=1}^m J_i,$$
where we let $J_1=\{i:B_i\in X\}$ for convenience. For some $s\leq m$, $J_i$ consists of one point for $i>s$. Of course, it is possible that $J_1$ is empty and each $J_i$ consists of one single index for all $i\geq 2$. It is easy to see that
\begin{eqnarray*}
\dim span\{B_i:B_i\in X\}&=& |J_1|,\\
\dim span\{B_{jk}:j,k\in J_i\}&=& |J_i|-1,~~2\leq i \leq s.
\end{eqnarray*}
Then the decomposition gives us
$$X=span\{B_i:B_i\in X\}\bigoplus \left [\bigoplus_{i=2}^sspan\{B_{jk}:j,k\in J_i\}\right].$$
Hence
$$\dim X=|J_1|+\sum_{i=2}^s(|J_i|-1).$$

Now we can derive (\ref{dimension condition2}) from (a) and (b). Recall that (\ref{dimension condition2}) is equivalent to (\ref{dimension condition3}). We see that (a) and (b) imply
\begin{eqnarray*}
\sum_{B_i\in X}\frac1{p_i}+\sum_{B_{jk}\in X}\frac1{p_{jk}}
&=& \sum_{J_1}\frac1{p_i}+\sum_{J_1}\frac1{p_{jk}}
+\sum_{i=2}^s\sum_{J_i}\frac1{p_{jk}}\\
&\leq&|J_1|+\sum_{i=2}^s(|J_i|-1)\\
&=&\dim X.
\end{eqnarray*}
Thus (\ref{dimension condition3}) holds.

Combining above results, we have proved that $(\ref{Brascamp-Lieb ineq sec4})$ is true if and only if the following three conditions hold:\\
$~~$($\alpha$) The scaling condition is true:
$$\sum_{i=1}^N\frac1{p_i}+\sum_{1\leq j<k\leq N}\frac1{p_{jk}}=N;$$\\
$~~$($\beta$) For all $J\subseteq \{1,2,\cdots,N\}$ with $|J|\geq 2$,
$$\sum_{J}\frac1{p_i}+\sum_{J}\frac1{p_{jk}}\leq |J|;$$
$~~$($\gamma$) For all $J\subseteq \{1,2,\cdots,N\}$ with $|J|\geq 2$;
$$\sum_{J}\frac1{p_{jk}}\leq |J|-1.$$

Now we can apply the boundedness of $\Psi$ in (\ref{Brascamp-Lieb ineq sec4}) to give a complete proof of Theorem \ref{main theorem 1}. By Fubini's theorem, the statement in Theorem \ref{main theorem 1} for dimension $n=1$ implies that for higher dimensions $n\geq 2$. In fact, assume Theorem \ref{main theorem 1} is true for dimension one. For general $n\geq 2$, we write $x_i=(x_i^{(1)},\cdots,x_i^{(n)})$ and suppose that $\{p_i\}$ and $\{\alpha_{ij}\}$ satisfy all conditions in Theorem \ref{main theorem 1}. Then for $x_i,x_j\in\mathbb{R}^n$ and $\alpha_{ij}\geq 0$, it is easy to see that
$$|x_i-x_j|^{-\alpha_{ij}}\leq \prod_{t=1}^n|x_i^{(t)}-x_j^{(t)}|^{-\alpha_{ij}/n}.$$
Hence we have
\begin{eqnarray*}
& &\int_{\mathbb{R}^{n(k+1)}}
{\prod\limits_{i=1}^{k+1}|f_{i}(x_{i})}|
{\prod\limits_{1 \leq i<j\leq k+1}|x_i-x_j|^{-\alpha_{ij}}}
dx_1dx_2\cdots dx_{k+1}\\
&\leq &
\int_{\mathbb{R}^{n(k+1)}}
{\prod\limits_{i=1}^{k+1}|f_{i}(x_{i})}|
\left(\prod_{t=1}^{n}\prod\limits_{1 \leq i<j\leq k+1}|x_i^{(t)}-x_j^{(t)}|^{-\alpha_{ij}/n}\right)
\prod_{t=1}^{n}dx_1^{(t)}dx_2^{(t)}\cdots dx_{k+1}^{(t)}.
\end{eqnarray*}
For each $t$, the data $\{p_i\}$ and $\{\alpha_{ij}/n\}$ satisfy all conditions in Theorem \ref{main theorem 1}. By repeated use of the inequality (\ref{inequality of multilinear fractional functionals}) for $n=1$, we can apply Theorem \ref{main theorem 1} for $n=1$ to deduce the general result for $n\geq 2$. For this reason, it suffices to show the theorem for dimension one.

Now we assume $n=1$. For convenience, we first prove Theorem \ref{main theorem 1} in the case when the data $\{p_i\}$ and $\{\alpha_{ij}\}$ satisfy $(i)$, $(ii)$ and $(a)$ of $(iii)$. This corresponds to simple Brascamp-Lieb data in Bennett-Carbery-Christ-Tao \cite{bennet1}.  Define $p_{ij}=1/\alpha_{ij}$. Then the data $\{p_i\}$ and $\{p_{ij}\}$ satisfy the above conditions ($\alpha$), ($\beta$) and ($\gamma$), where inequalities in ($\beta$) and ($\gamma$) are strict. By this fact and the assumption $1<p_i<\infty$, we can choose $k+1$ affinely independent points $(1/p_1^{(i)},\cdots, 1/p_{k}^{(i)},1/p_{k+1}^{(i)})$ near $(1/p_1,\cdots, 1/p_{k},1/p_{k+1})$ from the hyperplane
$$x_1+\cdots+x_{k}+x_{k+1}=k+1-\sum_{\{1,\cdots,k+1\}}\alpha_{ij}/n$$
such that the data $(1/p_1^{(i)},\cdots, 1/p_{k}^{(i)})$ and $\{p_{ij}\}$ still satisfy ($\alpha$), ($\beta$) and ($\gamma$). Also, we may assume that $(1/p_1,1/p_2,\cdots, 1/p_{k+1})$ lies in the open convex hull of points
$(1/p_1^{(i)},1/p_2^{(i)},\cdots, 1/p_{k+1}^{(i)})$. Since $\Psi$ satisfies (\ref{Brascamp-Lieb ineq sec4}), $T$ is bounded from
$L^{p_1^{(i)}}\times\cdots L^{p_{k}^{(i)}}$ into
$L^{q_{k+1}^{(i)}}$ with $q_{k+1}^{(i)}$ being the conjugate number of $p_{k+1}^{(i)}$. Observe that $\sum_{i=1}^{k}1/p_i>1/p_{k+1}'=1-1/p_{k+1}$ by conditions $(i)$
and $(ii)$ in Theorem \ref{main theorem 1}. Therefore we may apply
Theorem \ref{section2. Multilinear interpolation} to conclude that
$T$ is bounded from $L^{p_1}\times\cdots L^{p_{k}}$ into
$L^{p_{k+1}'}$. By duality, we see that $\Lambda$ is bounded on $L^{p_1}\times\cdots\times L^{p_{k+1}}$.

We now prove the remaining case of Theorem \ref{main theorem 1}. For the same reason as above, we still assume $n=1$. Suppose that there are proper subsets $J$ of $\{1,2,\cdots,k+1\}$ with $|J|\geq 2$ such that $\sum_J1/p_i+\sum_J\alpha_{ij}=|J|$. This implies existence of critical subspaces for Brascamp-Lieb data; see Bennett-Carbery-Christ-Tao \cite{bennet1, bennet2}.
It is worth noting that the fact $|J|\geq 2$ follows from the assumption
$1<p_i<\infty $. Let $k+1-m$ be the maximum of $|J|$ over all these
proper subsets $J$ for some $1\leq m \leq k-1$. We take a $J_0$ such that
$|J_0|$ attains the maximum $k+1-m$ and
$\sum_{J_0}1/p_i+\sum_{J_0}\alpha_{ij}=|J_0|.$
By Theorem \ref{sec 3. weighted fracInt Operator}, we can reduce
matters to two inequalities. By the condition $(iii)$, first observe that $J_0^c$ contains at least two elements since $\sum_{J_0^c}1/p_i\geq 1$. The choice of $J_0$ implies
\begin{equation}\label{section 6. suff cond for weighted FI}
\sum_{J}\left(\frac{1}{p_i}
+\sum\limits_{j\in J_0} \alpha_{ij}\right)
+ \sum_{J}\alpha_{ij}<|J|
\end{equation}
for all nonempty proper subsets $J$ of $J_0^c$. Indeed, if it were true that
$$\sum_{J}\left(\frac{1}{p_i}
+\sum_{j\in J_0} \alpha_{ij}\right)
+ \sum_{J}\alpha_{ij}=|J|$$ for some nonempty $J\subsetneqq J_0^c$, we would obtain
$$\sum_{J\cup J_0}\frac{1}{p_i}+ \sum_{J\cup J_0}
\alpha_{ij}=|J\cup J_0|$$
which contradicts our choice of $J_0$ since $|J\cup J_0|>|J|$.

Now we claim that the inequality (\ref{section3. Type II of
weighted FI}) is true for $J_0$. By symmetry, we
may assume $J_0=\{m+1,\cdots,k+1\}$ for some $2\leq m \leq k-1$.
It is clear that $J_0^c=\{1,2,\cdots,m\}$. By H\"{o}lder's inequality for Lorentz spaces, we have
$$\|f_i|\cdot|^{-\beta_i}\|_{L^{q_i}}
\leq
C\|f_i|\cdot|^{-\beta_i}\|_{L^{q_i,1}}
\leq C\|f_i\|_{L^{p_i,1}}~~~~\frac1{q_i}=\frac1{p_i}+\beta_i$$
for each $1\leq i\leq m$. The datum $\{q_i,\alpha_{ij}:1\leq i<j\leq m\}$
is simple, i.e., $\{q_i\}$ and $\{\alpha_{ij}:1\leq i<j\leq m\}$ satisfy (i), (ii) and (a) of (iii) in Theorem \ref{main theorem 1}. Hence (\ref{section3. Type II of
weighted FI}) is true with the $L^{p_i,1}$ norm in place of the $L^{p_i}$ norm. By a similar multilinear interpolation as above, we can prove that the inequality (\ref{section3. Type II of weighted FI}) holds. By an induction argument, we can also prove (\ref{section3. Type I of weighted FI}).

Combining above results, we have completed the proof of Theorem \ref{main theorem 1}.

Let $t_1=\cdots=t_k=t=\infty$ in Theorem \ref{section2. Multilinear interpolation}. We can use the same multilinear interpolation as above to prove the following point-wise estimate.
\begin{coro}\label{section 6. special integrals general k}
Assume $\alpha_{ij}\geq 0$ for $1\leq i<j \leq k+1$ and
$1<p_i<\infty$ for $1\leq i \leq k+1$ satisfy the conditions
$(\romannumeral1)$, $(\romannumeral2)$  and $(a)$ of $(iii)$ in Theorem {\rm\ref{main theorem 1}}.
Then there exists a constant $C$ such that
\begin{equation*}\label{section 6. est spe integral for weak functions}
\int_{\bR^{nk}}\prod\limits_{1\leq i<j \leq k+1}|x_i-x_j|^{-\alpha_{ij}}
\prod\limits_{i=1}^{k}|x_i|^{-n/p_i}dx_1\cdots dx_{k}
\leq C|x_{k+1}|^{-n(1-1/p_{k+1})}.
\end{equation*}
\end{coro}

\section{Local integrability conditions and $L^\infty$ estimates}
In this section, we shall characterize $\{\alpha_{ij}\}$ for which $T(f_1,f_2,\cdots,f_k)$ in (\ref{section1. multilinear operators}) is locally integrable for $f_i\in C_0^{\infty}$. This implies that the Selberg integral associated with the correlation kernel
$\prod|x_i-x_j|^{-\alpha_{ij}}$ is finite on any bounded region in
$\bR^{n(k+1)}$. For given $\{\alpha_{ij}\}$, one natural problem arises whether there exists an $k+1-$tuple ${p_i}\in (1,\infty)$ such that the data $\{p_i,\alpha_{ij}\}$ satisfies all conditions in Theorem \ref{main theorem 1}. We shall prove in $\S$6 that this is true; see Theorem \ref{section 6. exi of infty pairs p}. The argument in this section turns out to be very
useful throughout the rest of this paper.

\begin{theorem}\label{section5. nec-suff Selberg integral}
Assume $\alpha_{ij}\geq 0$ for $1\leq i<j\le k+1$ satisfy the
integrability condition
\begin{equation}\label{section5. condition for local integrability}
\sum\limits_J \alpha_{ij}< (|J|-1)n
\end{equation}
for any subset $J$ of $\{1,2,\cdots, k+1\}$ with $|J|\geq 2$.
Then we have
\begin{equation}\label{section5. Selberg integral}
\mathcal{I}_{k+1}(\{\alpha_{ij}\})=\int_{\boldsymbol{(B_1(0))^{k+1}}}\prod\limits_{\{1,2,\cdots,k+1\}} |x_i-x_j|^{-\alpha_{ij}}dx_1dx_2\cdots dx_{k+1}<\infty,
\end{equation}
where $B_1(0)\subset \mathbb{R}^n$ is the unit ball centered at the origin.
\end{theorem}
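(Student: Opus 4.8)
The plan is to induct on the number of points $N=k+1$. The base case $N=2$ is the elementary fact that $\int_{(B_1(0))^2}|x_1-x_2|^{-\alpha_{12}}\,dx_1dx_2<\infty$ exactly when $\alpha_{12}<n$, which is (\ref{section5. condition for local integrability}) for the only admissible $J=\{1,2\}$. For the inductive step I would integrate out one variable, say $x_{k+1}$, and write the integral (\ref{section5. Selberg integral}) as
$$\int_{(B_1(0))^k}\Big(\prod_{1\le i<j\le k}|x_i-x_j|^{-\alpha_{ij}}\Big)\Big(\int_{B_1(0)}\prod_{i=1}^k|x_i-x_{k+1}|^{-\alpha_{i,k+1}}\,dx_{k+1}\Big)dx_1\cdots dx_k,$$
estimating the inner integral by the size of $\sigma:=\sum_{i=1}^k\alpha_{i,k+1}$. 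If $\sigma<n$ it is bounded uniformly in $x_1,\dots,x_k$ by an elementary layer-cake estimate (the set where $\prod_i|x_i-x_{k+1}|^{-\alpha_{i,k+1}}>\lambda$ is contained in $k$ balls of radius $\lesssim\lambda^{-1/\sigma}$); if $\sigma=n$ it is at most $C\log(C/d_S)$ by Lemma \ref{Sec2. estimate of FI on the critical order}; and if $\sigma>n$ it is at most $C\sum_u\boldsymbol{L}_u(x_1,\dots,x_k)$ by Theorem \ref{Estimtates of Generalization of Riesz Potential}, after discarding the indices with $\alpha_{i,k+1}=0$ and using that $\alpha_{i,k+1}<n$ (the two-point case of (\ref{section5. condition for local integrability})). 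In each case, iterating Theorem \ref{Estimtates of Generalization of Riesz Potential} where necessary, the inner integral is dominated by a finite sum of terms, each a product of powers $d_I^{-\rho_I}$ with $\rho_I\ge0$ over subsets $I\subseteq\{1,\dots,k\}$, possibly times a power of a logarithm of such a $d_I$.

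The second step is to absorb each such weight into the Selberg kernel on the remaining $k$ points. Since $d_I\geq|x_i-x_j|$ for every pair $i<j$ in $I$, a factor $d_I^{-\rho}$ with $\rho\ge0$ is dominated by $\prod_{i<j\in I}|x_i-x_j|^{-\rho_{ij}}$ for any choice of nonnegative $\{\rho_{ij}\}$ with $\sum_{i<j\in I}\rho_{ij}=\rho$, and a logarithmic factor is dominated, for arbitrarily small $\varepsilon>0$, by $d_I^{-\varepsilon}$ (recall $d_I$ is bounded on $(B_1(0))^k$). Hence each term is dominated by $\prod_{1\le i<j\le k}|x_i-x_j|^{-\alpha_{ij}'}$ with $\alpha_{ij}'=\alpha_{ij}+\rho_{ij}$, $\rho_{ij}\ge0$, and the induction hypothesis finishes the proof provided the perturbed exponents $\{\alpha_{ij}'\}$ again satisfy the strict inequalities (\ref{section5. condition for local integrability}) on every $J\subseteq\{1,\dots,k\}$; the small $\varepsilon$ coming from the logs is fixed at the very end.

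The main obstacle is precisely this last bookkeeping. The total amount one is forced to add is controlled by $\sigma-n$, and the full-set case of (\ref{section5. condition for local integrability}), namely $\sum_{\{1,\dots,k+1\}}\alpha_{ij}<kn$, gives $\sigma-n<(k-1)n-\sum_{S}\alpha_{ij}$, so the global budget is compatible with the constraint coming from $J=S$; the real issue is to split this budget among the pairs so that for every proper $J$ the added amount $\sum_{i<j\in J}\rho_{ij}$ stays below the slack $(|J|-1)n-\sum_J\alpha_{ij}>0$. This is a feasibility question for a system of linear inequalities in the $\rho_{ij}$, which I would settle by Lemma \ref{section2. existence System of LI}: its dual formulation reduces the feasibility to the validity of (\ref{section5. condition for local integrability}) for exactly the subsets of $\{1,\dots,k+1\}$ that contain the index $k+1$. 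The delicate point---and where most of the work lies---is that for a proper $J$ with $\sum_{i\in J}\alpha_{i,k+1}<n$ the needed slack cannot be drawn from the $(J\cup\{k+1\})$-inequality but must come from the direct inequality $\sum_J\alpha_{ij}<(|J|-1)n$, so the two families of constraints have to be played against each other with care.
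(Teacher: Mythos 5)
Your overall strategy coincides with the paper's: induct on the number of points, integrate out $x_{k+1}$ using Lemma \ref{Sec2. estimate of FI on the critical order} and Theorem \ref{Estimtates of Generalization of Riesz Potential}, bound the resulting powers of $d_I$ by products $\prod_{i<j\in I}|x_i-x_j|^{-\rho_{ij}}$ (with logs absorbed into a small $\varepsilon$), and settle the exponent bookkeeping as a feasibility problem for a linear system via Lemma \ref{section2. existence System of LI}. The difficulty is that you stop exactly where the real proof begins. To apply Lemma \ref{section2. existence System of LI} one must verify its positivity condition for \emph{all} admissible dual multipliers: for every choice of nonnegative $\mu_J$, $\lambda_{ij}$, $\theta_1,\theta_2$ with $\lambda_{ij}+(\theta_1-\theta_2)=\sum_{J\ni i,j}\mu_J$ and some $\mu_J>0$, one needs the strict inequality (\ref{section5. object function of LI}), i.e.\ that the budget $\sum_\Theta\alpha_{i,k+1}-n$ is strictly outweighed by $\sum_J\mu_J\bigl((|J|-1)n-\sum_J\alpha_{ij}\bigr)$. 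Your one-line assertion that the dual formulation ``reduces to (\ref{section5. condition for local integrability}) for the subsets containing $k+1$'' is precisely the unproved point: the hypotheses only give $\sum_\Theta\alpha_{i,k+1}-n<(|J|-1)n-\sum_J\alpha_{ij}$ when $\Theta\subset J$, so one has to show that the extremal dual multipliers can be concentrated on sets containing $\Theta$. In the paper this takes the bulk of Section 5: the merging recursions (\ref{section5. Process Type I})--(\ref{section5. Process Type II}) under which the dual objective is monotone (via the submodularity inequality (\ref{section5. cliam: object function is increasing})), the stability analysis through continuous chains (Proposition \ref{section5. prop for Bm}, Lemma \ref{section5. stable condition}), and the limiting identity (\ref{section5. limits of mu_S(N)}) giving $\Omega_m^\ast(\infty)=1+\min_\Theta\lambda_{ij}^\ast(\infty)\ge 1$. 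You correctly sense that ``the two families of constraints have to be played against each other with care,'' but you supply no argument for it, so the proposal as it stands has a genuine gap at the theorem's central step.

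A secondary point: by iterating Theorem \ref{Estimtates of Generalization of Riesz Potential} all the way down you produce, for each term, a nested chain of sets $I$ each carrying a \emph{prescribed} budget ($\alpha_{u_1,k+1}$ on the largest set, etc., and $\sum_{I_r}\alpha_{i,k+1}-n$ on the smallest), so your feasibility system has several equality constraints with containment restrictions, which is more rigid than the paper's single-budget system $(V.1)$. The paper avoids this by peeling off only one index of $\Theta$ at a time: it distributes either $\sum_\Theta\alpha_{i,k+1}-n$ (system $(V.1)$, when the remaining sum is $\le n$) or just $\alpha_{1,k+1}$ (system $(V.2)$, when it exceeds $n$), keeping the leftover integral as a Selberg integral with $|\Theta|$ reduced by one and inducting. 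If you keep your full expansion, you should either prove feasibility of the chain-constrained system directly (which requires at least the same duality work, set by set), or reorganize as the paper does; either way the dual verification sketched above cannot be bypassed.
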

\begin{proof}
In the case $k=1$, it is clear that the above integral converges
absolutely. For $k\geq 2$, we begin with the simplest case $k=2$ and
then make induction for general $k$. For $k=2$, it is convenient to
divide the proof into three cases.

Case 1. $\alpha_{13}+\alpha_{23}<n$\\
It is clear that
\begin{eqnarray*}
&&\int_{(B_1(0))^3}\prod\limits_S |x_i-x_j|^{-\alpha_{ij}}dx_1dx_2 dx_3\\
&\leq &
 C \int_{(B_1(0))^2}|x_1-x_2|^{-\alpha_{12}}dx_1dx_2
\end{eqnarray*}
which is finite by the assumption $\alpha_{12}<n$.

Case 2. $\alpha_{13}+\alpha_{23}=n$\\
Using Lemma \ref{Sec2. estimate of FI on the critical order}, we obtain
\begin{eqnarray*}
&&\int_{(B_1(0))^3}\prod\limits_S |x_i-x_j|^{-\alpha_{ij}}dx_1dx_2 dx_3\\
&\leq& C\int_{(B_1(0))^2}|x_1-x_2|^{-\alpha_{12}}\log(4/|x_1-x_2|)
dx_1dx_2 < \infty.
\end{eqnarray*}

Case 3. $\alpha_{13}+\alpha_{23}>n$\\
Observe that
$$\int_{B_1(0)}|x_1-x_3|^{-\alpha_{13}}|x_2-x_3|^{-\alpha_{23}}dx_3
\leq C|x_1-x_2|^{n-\alpha_{13}-\alpha_{23}}$$
which implies the integral in (\ref{section5. Selberg integral}) is finite by the assumption
 $\alpha_{12}+\alpha_{13}+\alpha_{23}<2n$.

We now consider the general case $k\geq 3$. Assume
all $k$ fold integrals of form $(\ref{section5. Selberg integral})$ converge under the assumption (\ref{section5. condition for local integrability}). We shall prove that the $k+1$ fold integral is absolutely convergent. Let
$$\Theta=\{i: 1\le i \le k,\quad \alpha_{i,k+1}>0\}.$$
By simple calculations, it is easy to verify our claim in the case
$\sum_\Theta \alpha_{i,k+1}\leq n$. Indeed, if $\sum_\Theta
\alpha_{i,k+1}$ is less than $n$, we first take integration with respect to $x_{k+1}$ and then the matter reduces to a $k-$multiple integral. If $\sum_\Theta \alpha_{i,k+1}=n$, it follows from Lemma \ref{Sec2.
estimate of FI on the critical order} that
$$\int_{B_1(0)}\prod_{\Theta}|x_i-x_{k+1}|^{-\alpha_{i,k+1}}dx_{k+1}
\leq C \l(\sum_\Theta |x_i-x_j|\r)^{-\varepsilon}$$ where
$\varepsilon>0$ is a small number to be determined. Choose
$i_0,j_0\in \Theta$ with $i_0<j_0$. Let $$\overline{\alpha_{ij}}
=\alpha_{ij}+\boldsymbol{\delta}_i^{i_0}
\boldsymbol{\delta}_j^{j_0}\epsilon,\quad 1\leq i<j\leq k,$$ where
${\textrm{\boldmath $\delta$}}_{s}^{t}$ is the Kronecker symbol. In
other words, ${\textrm{\boldmath $\delta$}}_{s}^{t}=1$ if $s=t$ and
${\textrm{\boldmath $\delta$}}_{s}^{t}=0$ otherwise. If $\epsilon$
is sufficiently small, then $\{\overline{\alpha_{ij}}\}$ still
satisfies the integrability condition. Therefore the $k+1$-multiple
integral in (\ref{section5. Selberg integral}) is less than a constant multiple of a $k$-multiple integral with $\{\alpha_{ij}\}$ replaced by $\{\overline{\alpha_{ij}}\}$. The desired result follows by induction.

The crux of the proof lies in the case $\sum_\Theta
\alpha_{i,k+1}>n$. The argument depends on the number of elements in $\Theta$. The simplest case is $|\Theta|=2$ in which the argument is direct. For $3\leq |\Theta|\leq k$, we shall reduce the matter to the case $|\Theta|=2$ by invoking a useful procedure. Indeed, if $|\Theta|=2$, we may assume $\Theta = \{1,2\}$ by the symmetry of parameters. Then by a similar treatment in Case 3 for $k=2$, put
\begin{equation}
\overline{\alpha_{ij}}= \alpha_{ij}+{\textrm{\boldmath $\delta$}}_{i}^{1}{\textrm{\boldmath $\delta$}}_{j}^{2}
\l(\sum_\Theta \alpha_{i,k+1}-n\r), \quad 1\leq i <j \leq k.
\end{equation}
It is easy to verify that (\ref{section5.
condition for local integrability}) are still true for
$\{\overline{\alpha_{ij}}\}$. Indeed, it suffices to show that
(\ref{section5. condition for local integrability}) holds for those
$J$ containing both 1 and 2 since we obviously have
$$\sum_J\alpha_{ij}=\sum_J\overline{\alpha_{ij}}$$
for all subsets $J$ of $S=\{1,2,\cdots,k\}$ satisfying
$\{1,2\}\nsubseteq J$. For $J\subseteq S$ containing $\Theta=\{1,2\}$ as a subset, it follows from the definition of $\{\overline{\alpha_{ij}}\}$ that
\begin{equation*}
\sum_J\overline{\alpha_{ij}}=\sum_{J\cup\{k+1\}}\alpha_{ij}-n
<(|J|-1)n
\end{equation*}
by the assumption (\ref{section5. condition for local integrability}). Hence by induction the integral in (\ref{section5. Selberg integral}) converges in the case $|\Theta|=2$.

If $|\Theta|=m$ with $3\le m \le k$, our idea is to show that the
$k+1-$multiple integral is dominated by summation of two similar kinds of integrals by distributing some powers into $\{\alpha_{ij}:1\le i<j \leq k\}$ appropriately. The first type of these integrals has a $k-point$ correlation integrand. The other type is the same as the integral in (\ref{section5. Selberg integral}) with $|\Theta|=m-1$. Without loss of generality, we may assume $\Theta=\{1,2,\cdots,m\}.$ By the assumption
$\sum_\Theta\alpha_{i,k+1}>n$, we use Theorem \ref{Estimtates of
Generalization of Riesz Potential} to obtain \begin{equation}
\int_{\mathbb{R}^n}
\prod\limits_{\Theta}|x_i-x_{k+1}|^{-\alpha_{i,k+1}}dx_{k+1} \le C
\sum_{\Theta} \boldsymbol{L}_{i},
\end{equation}
where $\boldsymbol{L}_{i}$ are defined as in Theorem \ref{Estimtates of Generalization of Riesz Potential}.

Replacing the integral relative to $x_{k+1}$ by each $\boldsymbol{L}_{i}$, we shall prove that each $k+1-$multiple integral is dominated by integrals of the above two types.

If $\sum_{i=2}^m \alpha_{i,k+1}<n$, we shall prove that
\begin{eqnarray*}
&&\int_{(B_1(0))^k}\l(\prod\limits_S |x_i-x_j|^{-\alpha_{ij}}\r)\l(\sum_\Theta |x_i-x_j|\r)^{n-\sum_\Theta \alpha_{i,k+1}}dx_1\cdots dx_k\\
&\leq &
C \int_{(B_1(0))^k}\prod\limits_S |x_i-x_j|^{-\alpha_{ij}-\delta_{ij}}dx_1\cdots dx_k,
\end{eqnarray*}
where $\{\overline{\alpha_{ij}}=\alpha_{ij}+\delta_{ij}\}$ satisfies (\ref{section5. condition for local
integrability}). Here $\sum_\Theta
\delta_{ij}=\sum_{\Theta}\alpha_{i,k+1}-n$ for $\delta_{ij}\geq 0$
and $\delta_{ij} = 0$ if either $i$ or $j$ does not lie in $\Theta$. Now we turn our attention to the existence of such a solution $\{\delta_{ij}\}$. In other words, we need solve the following system of linear inequalities:
\begin{equation}
(V.1)
\begin{cases}
(i)\quad\; \delta_{ij}\geq 0, \quad 1\le i <j \le m;\nonumber\\
(ii) \quad \sum\limits_{\Theta}\delta_{ij}=\sum\limits_{\Theta}\alpha_{i,k+1}-n;
\nonumber\\
(iii)\; \sum\limits_{J\cap\Theta}\delta_{ij}<(|J|-1)n-\sum\limits_{J}\alpha_{ij}
\quad {\rm{for}}\quad J\in \mathcal{F}_m, \nonumber
\end{cases}
\end{equation}
where the class $\mathcal{F}_m$ consists of all subsets $J$ of
$\{1,\cdots,k\}$ satisfying $|J\bigcap \Theta|\geq 2$. Note that we
have assumed $\Theta=\{1,2,\cdots,m\}$. Here we use the notation
$\mathcal{F}_m$ instead of $\mathcal{F}_{\Theta}$ for simplicity.

Now  we shall apply Lemma \ref{section2. existence System of LI} to show the existence of solutions $\delta_{ij}$. Obviously, $(i)$ and $(ii)$ in $(V.1)$ have solutions and so Lemma \ref{section2. existence System of LI} is applicable. For arbitrary nonnegative numbers $\lambda_{ij}$, $\theta_1$, $\theta_2$ and $\mu_J$ with at least one $\mu_J>0$ for some $J\in\mathcal{F}_m$ satisfying
\begin{equation}\label{section5. (1) conditions satisfied by parameters}
\lambda_{ij}+(\theta_1-\theta_2)-\sum\limits_{J\ni i,j}\mu_J=0
\end{equation}
for $1\le i<j \le m$, we must show that
\begin{equation}\label{section5. object function of LI}
(\theta_1-\theta_2)\Big(\sum\limits_{\Theta}\alpha_{i,k+1}-n \Big)
-\sum\limits_{J\in \mathcal{F}_m} \mu_J \Big((|J|-1)n-\sum\limits_{J}\alpha_{ij}\Big)<0.
\end{equation}
It suffices to prove this inequality when $\theta_1-\theta_2>0$
since there exists one $\mu_J>0$ for some $J$ and $\sum_J \alpha_{ij}<(|J|-1)n$. Now assume $\theta_1-\theta_2>0$. By dilation, put
$\theta_1-\theta_2=1$. Then $\mu_J$ and $\lambda_{ij}$ satisfy
\begin{equation}\label{section5. (2) conditions satisfied by parameters}
\sum\limits_{J\ni i,j}\mu_J=1+\lambda_{ij}
\end{equation}
for $1\le i<j \le m$. To prove this inequality, a basic idea is to
determine the supremum of the objective function in (\ref{section5. object function of LI}). Though the supremum cannot be attained generally, $\mu_J$ and
$\lambda_{ij}$ have simple forms when the value of the
objective function is sufficiently close to its supremun. More
precisely, for any $\{\mu_J(0),\lambda_{ij}(0)\}$, we shall construct a sequence $\{\mu_J(N),\lambda_{ij}(N)\}$ such that the objective function for $\{\mu_J(N),\lambda_{ij}(N)\}$ increases. By taking $N\rightarrow\infty$, the sign of the objective function will be easily verified.

Now we turn to construct such a process. Suppose $\{\mu_J(N-1)\}$ are given. For convenience, we define the following conditions for two subsets $J_1$ and $J_2$:
\begin{equation}\label{section 5. restrictions on subsets}
\textrm{$\mathbf{(a)~~\mu_{J_1}(N-1)\mu_{J_2}(N-1)>0}$;~~~~~
$\mathbf{(b)~~J_1\cap J_2\neq \emptyset}$;~~~~~
$\mathbf{(c)~~J_1\nsubseteq J_2~~\textrm{and}~~J_2\nsubseteq J_1}$}.
\end{equation}
If $\{\mu_J(N-1)\}$ and $\{\lambda_{ij}(N-1)\}$ are known, then we choose two subsets $J_1$ and $J_2$ satisfying suitable restrictions and set $\mu_{J}(N)$ as follows. We shall explain later why these restrictions on $J_1$ and $J_2$ are required. In the following, the notation $A\wedge B$ means $\min\{A,B\}$.
\\

\noindent\textrm{$\mathbf{Case~~I:~~J_1,J_2\in \mathcal{F}_m ~~satisfy~~the ~~above~~conditions~~(a),~(b),~(c)~~and~~
  J_1\bigcap J_2 \in \mathcal{F}_m.} $ }\\
 \begin{equation}\label{section5. Process Type I}
 \begin{cases}
 \mu_{J_1}(N)= \mu_{J_1}(N-1)- \mu_{J_1}(N-1)\wedge  \mu_{J_2}(N-1)\\

 \mu_{J_2}(N)= \mu_{J_2}(N-1)-\mu_{J_1}(N-1)\wedge  \mu_{J_2}(N-1)\\

  \mu_{J_1\cap J_2}(N)=\mu_{J_1\cap J_2}(N-1)+ \mu_{J_1}(N-1)\wedge  \mu_{J_2}(N-1)\\

 \mu_{J_1\cup J_2}(N)=\mu_{J_1\cup J_2}(N-1)+ \mu_{J_1}(N-1)\wedge  \mu_{J_2}(N-1)\\

 \mu_{J}(N)=\mu_J(N-1), \quad J\notin \{J_1,J_2,J_1\cap J_2,J_1\cup J_2\}.
 \end{cases}
 \end{equation}

\noindent\textrm{$\mathbf{Case~~II:~~J_1,J_2\in \mathcal{F}_m~~satisfy~~(a),~(b),~(c)~~and~~
J_1\bigcap J_2 \notin \mathcal{F}_m.}$} \\
 \begin{equation}\label{section5. Process Type II}
 \begin{cases}
 \mu_{J_1}(N)= \mu_{J_1}(N-1)- \mu_{J_1}(N-1)\wedge  \mu_{J_2}(N-1)\\

 \mu_{J_2}(N)= \mu_{J_2}(N-1)-\mu_{J_1}(N-1)\wedge  \mu_{J_2}(N-1)\\

 \mu_{J_1\cup J_2}(N)=\mu_{J_1\cup J_2}(N-1)+ \mu_{J_1}(N-1)\wedge  \mu_{J_2}(N-1)\\

 \mu_{J}(N)=\mu_J(N-1), \quad J\notin \{J_1,J_2,J_1\cup J_2\}.
 \end{cases}
 \end{equation}
We also define $\lambda_{ij}(N)$ by
(\ref{section5. (2) conditions satisfied by parameters}) with
$\{\mu_J(N)\}$ in place of $\{\mu_J\}$.

The motivation for construction of such a process is the following inequality,
\begin{eqnarray}\label{section5. cliam: object function is increasing}
&&\Big((|J_1\cap J_2|-1)n
-\sum\limits_{J_1\cap J_2}\alpha_{ij}\Big)
+\Big((|J_1\cup J_2|-1)n
-\sum\limits_{J_1\cup J_2}\alpha_{ij}\Big)\nonumber\\
&\leq &\sum\limits_{s=1}^2\Big((|J_s|-1)n
-\sum\limits_{J_s}\alpha_{ij}\Big)
\end{eqnarray}
for all $J_1$ and $J_2$ in the class $\mathcal{F}_m$. Here we use
the summation convention $\sum_J\alpha_{ij}=0$ if $|J|\leq 1$.

It is helpful to make some observations. First
we claim that $\lambda_{ij}(N)$ increases as $N$. Assume two subsets $J_1$ and $J_2$ are chosen in the $N-th$ step. For each pair $i$ and $j$, there are several possible
cases. If either $i\notin J_1\cup J_2$ or $j \notin J_1\cup J_2$,
then $\lambda_{ij}(N)=\lambda_{ij}(N-1)$. Now we assume $i,j \in J_1\cup J_2$ and divide this into three subcases. $\boldsymbol{(1)}$~~~If $\{i,j\} \subseteq
J_1\cap J_2$, then it is easy to see that
$\lambda_{ij}(N)=\lambda_{ij}(N-1)$. $\boldsymbol{(2)}$~~~If $i\notin J_1\cap J_2$ or
$j\notin J_1\cap J_2$ but either $\{i,j\}\subseteq J_1$ or $\{i,j\}\subseteq J_2$, then $\lambda_{ij}$ remains unchanged in the $N$ step. $\boldsymbol{(3)}$~~~The remaining
subcase is that $\{i,j\}\nsubseteq J_1$ and $\{i,j\}\nsubseteq J_2$ in which
$\lambda_{ij}$ increases in the $N$-th step. Thus we have established our claim.

The key observation is that the objective function also increases as $N$. Equivalently, we have
\begin{eqnarray}\label{section5. object function is increasing}
-\sum\limits_{J\in \mathcal{F}_m} \mu_J(N-1) \l((|J|-1)n-\sum\limits_{J}\alpha_{ij}\r)
\leq
-\sum\limits_{J\in \mathcal{F}_m} \mu_J(N) \l((|J|-1)n-\sum\limits_{J}\alpha_{ij}\r)
\end{eqnarray}
for any $N\geq 1$. Indeed, this observation is an immediate consequence of the inequality
(\ref{section5. cliam: object function is increasing}) and its simple variant
$$(|J_1\cup J_2|-1)n
-\sum\limits_{J_1\cup J_2}\alpha_{ij}
\leq \sum\limits_{s=1}^2\Big((|J_s|-1)n
-\sum\limits_{J_s}\alpha_{ij}\Big)$$
with the additional assumption $|J_1 \cap J_2|\geq 1$. This explains why the recursion (\ref{section5. Process Type II}) only applies to those $J_1$ and $J_2$ having nonempty intersection; condition $\boldsymbol{(b)}$ in (\ref{section 5. restrictions on subsets}).

Now we shall introduce some subclasses of $\mathcal{F}_m$. Let
$\mathcal{A}_m$, $\mathcal{B}_{m}$ and $\mathcal{C}_m$ be defined by
 \begin{equation}\label{section5. def of Am Bm Cm}
 \mathcal{A}_m=\{J\in\mathcal{F}_m:\; \mu_J>0\},\quad
  \mathcal{B}_m=\{J\in\mathcal{A}_m:\; \Theta \nsubseteq J \},\quad
   \mathcal{C}_m=\{J\in\mathcal{A}_m:\; \Theta \subseteq J\}.
 \end{equation}
For convenience, we also use $\mathcal{A}_m(N)$, $\mathcal{B}_{m}(N)$ and $\mathcal{C}_m(N)$ defined similarly
as above to keep track of the above process. It is clear that
$\mathcal{A}_{m}=\mathcal{B}_{m}\bigcup\mathcal{C}_m$ and $\mathcal{B}_{m}\bigcap\mathcal{C}_m=\emptyset$.

\begin{defn}
If $\{\mu_J: J\in \mathcal{F}_m\}$ is invariant under any possible
process described as in {\rm{(\ref{section5. Process Type I})}} and
{\rm(\ref{section5. Process Type II})}, then we say that
$\{\mu_J: J\in \mathcal{F}_m\}$ is stable.

Let $F(\{\mu_J\})$ be a function of $\mu_J \geq 0$ for
$J\in\mathcal{F}_m$. Assume $\{\mu_J(0):J\in\mathcal{F}_m\}$ is a
set of nonnegative numbers. We say that $F$ is stable with respect
to $\{\mu_J(0)\}$ if for all $N\geq 1$
$F(\{\mu_J(0)\})=F(\{\mu_J(N)\})$, where $\{\mu_J(N)\}$ is obtained
by an arbitrary process of $N$ steps.
\end{defn}
 By this definition,
$\{\mu_J: J\in \mathcal{F}_m\}$ is stable if and only if for all
$J_1,J_2\in\mathcal{A}_m$ one of the three relations holds: (i)
$J_1\cap J_2=\emptyset$; (ii) $J_1\subset J_2$; (iii) $J_2\subset J_1$. Further observation also shows
$\sum_{J\in\mathcal{C}_m}\mu_J=1+\min_{\Theta}\lambda_{ij}$ when
$\{\mu_J\}$ is stable. This observation will be proved later. For
convenience, we introduce the notation $\Omega_m$ to denote
\begin{equation}\label{section5. def of Omega}
\Omega_m=\sum_{J\in\mathcal{C}_m}\mu_J.
\end{equation}
 And $\Omega_m(N)$ is defined as above with $\mathcal{C}_m$ and $\mu_J$ replaced by $\mathcal{C}_m(N)$ and $\mu_J(N)$ respectively.

We do not know whether arbitrary $\{\mu_J(0): J\in \mathcal{F}_m\}$ and $\{\lambda_{ij}(0)\}$ satisfying (\ref{section5. (2) conditions
satisfied by parameters}) can reach a stable state by a process
consisting of finite steps. However, by a passage to the limit, we
can arrive at a special state, not stable generally, which is
enough for our purpose. Let $\{\mu_J^\ast(N):J\in \mathcal{F}_m\}$
be obtained by a process of $N$ steps described as in
(\ref{section5. Process Type I}) and (\ref{section5. Process Type
II}) such that the supremum
$$\Omega_{m}^\ast(N)=\sup\Omega_m(N)$$
is attained. Here the supremum is taken over all possible processes
consisting of $N$ steps. It is possible that these processes are not unique. We can take one of these processes by which $\Omega_{m}^{\ast}(N)$ is obtained. It should be pointed out that $\{\mu_J^\ast(N)\}$ is not obtained by a continuous
procedure with respect to $N$. Therefore in general we cannot obtain
$\{\mu_J^\ast(N)\}$ from $\{\mu_J^\ast(N-1)\}$ by one step. On the
one hand, it is clear that $\Omega_{m}^{\ast}(N)$ increases as $N$.
On the other hand, we also have
\begin{equation}\label{section5. mu S(N) is bounded}
\sum\limits_{J\ni i}\mu_J^\ast(N)
\leq
\sum_{J\ni i}\mu_J(0)
\leq
\max\limits_{1\leq i \leq k}\l(\sum_{J\ni i}\mu_J(0)\r),
\end{equation}
for any $i\in S=\{1,2,\cdots,k \}$. Suppose at the $k$-th step with
$k\leq N-1$ the recursion is applied to $J_1$ and $J_2$ in
$\mathcal{F}_m$. Then we see that $\sum_{J\ni i}\mu_J(k)=\sum_{J\ni
i}\mu_J(k+1)$ unless $i\in J_1\cap J_2$ and $J_1\cap J_2 \notin
\mathcal{F}_m$. In the latter case, we have $\sum_{J\ni
i}\mu_J(k)>\sum_{J\ni i}\mu_J(k+1)$.  There is another similar
observation as (\ref{section5. mu S(N) is bounded}). In the system
${\rm(V.1)}$, $\delta_{ij}$ is assumed to be zero if $i\notin
\Theta$ or $j\notin \Theta$. If $\{\mu_J(0)\}$ and
$\{\lambda_{ij}(0)\}$ satisfy (\ref{section5. (2)
conditions satisfied by parameters}), we may assume that
$\mu_J(0)=0$ for nonempty $J\subset S$ but $J\notin \mathcal{F}_m$.
Of course we have $\mu_J(N)=0$ if $J\notin \mathcal{F}_m$ for any
$N$. Then it is clear that
\begin{equation}\label{section5. muJ(N) are bounded}
\sum_{J\subset S}\mu_J(N)\leq \sum_{J\subset S}\mu_J(0),
\end{equation}
where $\mu_J(N)$ are obtained by any possible process with $N$ steps. Both $(\ref{section5. mu S(N) is bounded})$ and
$(\ref{section5. muJ(N) are bounded})$ imply that
$\{\Omega_{m}^{\ast}(N)\}$ has a uniform upper bound. Put
\begin{equation}\label{section5. limit of Omegam(infty)}
\Omega_{m}^{\ast}(\infty)=\lim\limits_{N\rightarrow\infty}\Omega_{m}^{\ast}(N).
\end{equation}
This limit is well defined since $\{\Omega_{m}^{\ast}(N)\}$ is a
bounded increasing sequence. It is possible that
$\Omega_{m}^{\ast}(\infty)$ can be obtained by a process of finite
steps, i.e., $\Omega_{m}^{\ast}(\infty)=\Omega_{m}^{\ast}(N)$ for
$N\ge N_0$ for some $N_0$. In this situation,
$\Omega_{m}^{\ast}(N_0)$ becomes stable. To calculate $\Omega_{m}^{\ast}(\infty)$ explicitly, we are going to establish a necessary and sufficient condition under which $\Omega_{m}$ is stable. It will be convenient to introduce a concept related to a sequence of sets.
\begin{defn}
If $\{J_i\}_{i=1}^a$ with $a\geq2$ is a sequence of sets with the property that each intersection $\big(\bigcup_{t=1}^{i}J_t\big)\bigcap J_{i+1}$ is nonempty for $1\leq i \leq a-1$, then we call $\{J_i\}_{i=1}^a$ a continuous chain.
\end{defn}

\begin{lemma}\label{section5. stable condition}
Assume $\{\mu_J:J\in \mathcal{F}_m\}$ is a set of nonnegative numbers and $\Omega_m$ is defined as in {\rm(\ref{section5. def of Omega})}. Then $\Omega_m$ is stable with respect to $\{\mu_J\}$ if and only if $\Theta \nsubseteqq \bigcup_{i=1}^{a}J_i$ for any continuous chain $\{J_i\}_{i=1}^a$ in $\mathcal{B}_m$.
\end{lemma}
\begin{proof}
We first show the necessity part. Assume the converse, i.e., $\Omega_m$ is stable and there exists a continuous chain $\{J_i\}_{i=1}^a$ in $\mathcal{B}_m(0)$
such that $\Theta \subseteq \bigcup_{i=1}^{a}J_i$. We shall see that
there is a process such that $\Omega_m(a-1)>\Omega_m(0)$. First
applying the recursion in (\ref{section5. Process Type
I}) and (\ref{section5. Process Type II}) to $J_1$ and $J_2$,
we obtain $\{\mu_J(1)\}$. In the second step, we repeat the recursion
with respect to $J_1\cup J_2$ and $J_3$. Likewise, in the $i$-th
step we apply the recursion to $\bigcup_{1\le t \le i}J_t$ and
$J_{i+1}$. Here we assume $\bigcup_{1\le t \le i}J_t$ and
$J_{i+1}$ satisfy $\mathbf{(c)}$ in (\ref{section 5. restrictions on subsets}). If this is not the case, we can skip over to apply the recursion to $\bigcup_{1\le t \le i+1}J_t$ and $J_{i+2}$.
After at most $a-1$ steps, it follows from $J_i\nsubseteqq\Theta$ that
\begin{equation}\label{section5. lower bounds between muS(N)}
\Omega_m(a-1)
\geq
\Omega_m(0)+\min_{1\leq i\leq a}\mu_{J_i}(0)
\geq
\Omega_m(0)+\min_{J\in \mathcal{A}_m}\mu_{J}(0)
\end{equation}
which contradicts the assumption that $\Omega_m(0)$ is stable.

The proof of the sufficiency part is intricate. We first establish a useful property of $\mathcal{B}_m$ under the assumption that for all continuous chains $\{J_i\}_{i=1}^a$ in $\mathcal{B}_m$, the union $\bigcup_{i=1}^aJ_i$ does not contain $\Theta$ as a subset.
\begin{prop}\label{section5. prop for Bm}
Suppose that $\Theta\nsubseteqq \bigcup_{i=1}^aJ_i$ for all continuous chains $\{J_i\}_{i=1}^a$ in $\mathcal{B}_m$. Then there exists a nonempty proper subset $\Theta_1$ of $\Theta$ such that for all $J_1$ and $J_2$ in $\mathcal{B}_m$ with $J_1\bigcap J_2\neq \emptyset$, it is true that:\\
\begin{equation}\label{section5. property of stabel Bm}
\textrm{either}\quad\Big(J_1\bigcup J_2\Big)\bigcap \Theta \subset \Theta_1 \quad \textrm{or}
\quad\Big(J_1\bigcup J_2\Big)\bigcap \Theta \subset \Theta-\Theta_1.
\end{equation}
Generally, this property also holds for all continuous chains $\{J_i\}_{i=1}^a$ in $\mathcal{B}_m$. More precisely, we have
\begin{equation*}
\textrm{either}\quad
\Big(\bigcup_{i=1}^aJ_i\Big)\bigcap \Theta \subset \Theta_1 \quad \textrm{or}
\quad\Big(\bigcup_{i=1}^aJ_i\Big)\bigcap \Theta \subset \Theta-\Theta_1.
\end{equation*}
\end{prop}
Now we turn to prove this proposition. Define $\eta$ by
\begin{equation}\label{section5. def of eta}
\eta=\sup\Big|\Big(\bigcup_{i=1}^aJ_i\Big)\bigcap \Theta\Big|,
\end{equation}
where the supremum is taken over all continuous chains $\{J_i\}$ in $\mathcal{B}_m$. Since $\eta$ is an integer, we see that $\eta$ can be achieved for some continuous chain $\{I_i\}_{i=1}^{a}$ in $\mathcal{B}_m$. Let $\Theta_1=\big(\bigcup_{i=1}^a I_i\big)\bigcap \Theta$. Then $\Theta_1$ is a proper subset of $\Theta$. For all $J$ in $\mathcal{B}_m$, we claim that either $J\bigcap\Theta\subseteq\Theta_1$ or
$J\bigcap \big(\bigcup_{i=1}^a I_i\big)=\emptyset$ is true. Otherwise, if there were some $I_{a+1}\in\mathcal{B}_m$ such that
$I_{a+1}\cap\Theta\nsubseteqq\Theta_1$ and
$I_{a+1}\bigcap \big(\bigcup_{i=1}^a I_i\big)\neq\emptyset$, it would follow that
$\{I_i\}_{i=1}^{a+1}$ is a continuous chain in $\mathcal{B}_m$ and satisfies
$\Big|\Big(\bigcup_{i=1}^{a+1}I_i\Big)\bigcap \Theta\Big|>\eta$. This contradicts the definition of $\eta$.

Define $\mathcal{B}_m^{(1)}$ and $\mathcal{B}_m^{(2)}$ by
\begin{equation*}
\mathcal{B}_m^{(1)}
=\Big\{J\in\mathcal{B}_m:\; J\bigcap \big(\bigcup_{i=1}^a
I_i\big)=\emptyset\Big\},
\quad
\mathcal{B}_m^{(2)}=\mathcal{B}_m-\mathcal{B}_m^{(1)}.
\end{equation*}
Then $J\bigcap\Theta\subseteq\Theta_1$ for all $J\in
\mathcal{B}_m^{(2)}$. Moreover, we also have that $J_1\bigcap
J_2=\emptyset$ if $J_1\in \mathcal{B}_m^{(1)}$ and $J_2\in
\mathcal{B}_m^{(2)}$. Actually, if there were $J_1\in
\mathcal{B}_m^{(1)}$ and $J_2\in \mathcal{B}_m^{(2)}$ with
$J_1\bigcap J_2\neq\emptyset$, it would follow that
$\{I_i\}_{i=1}^{a+2}$ is a continuous chain in $\mathcal{B}_m$ and
$\Big|\Big(\bigcup_{i=1}^{a+2}I_i\Big)\bigcap \Theta\Big|>\eta$
where $I_{a+1}=J_2$ and $I_{a+2}=J_1$. This contradicts our choice
of $\eta$. Therefore one of two relations in (\ref{section5.
property of stabel Bm}) is true if $J_1\bigcap J_2\neq\emptyset$. It remains to prove that the same conclusion holds for any continuous chain in $\mathcal{B}_m$. Without loss of generality, we may assume $a=3$. Since the intersection of $J_1$ and $J_2$ is nonempty, we see that either $J_1,J_2\in \mathcal{B}_m^{(1)}$ or $J_1,J_2\in\mathcal{B}_m^{(2)}$ is true. By the assumption that $J_1,J_2,J_3$ is a continuous chain, we have either $J_1\bigcap J_3\neq \emptyset$ or $J_2\bigcap J_3\neq \emptyset$. Thus either $J_1,J_2,J_3\in \mathcal{B}_m^{(1)}$ or $J_1,J_2,J_3\in\mathcal{B}_m^{(2)}$ holds. We conclude the proof of
Proposition \ref{section5. prop for Bm}.\\

Now we shall invoke Proposition \ref{section5. prop for Bm} to give
a complete proof of the sufficiency part of Lemma \ref{section5. stable
condition}. For initial data $\{\mu_J(0)\}$ and
$\{\lambda_{ij}(0)\}$ satisfying
(\ref{section5. (2) conditions satisfied by parameters}), it follows from Proposition \ref{section5. prop for Bm}
that there is a nonempty proper $\Theta_1\subseteq\Theta$ such that for
all $J_1,\;J_2\in\mathcal{B}_m(0)$ satisfying $J_1\bigcap J_2\neq
\emptyset$, one of two relations in (\ref{section5. property of
stabel Bm}) is true. We shall prove that $\Omega_m(0)$ is stable. In other words, $\Omega_m(0)=\Omega_m(N)$ for any process of $N$ steps. Here $\{\mu_J(N)\}$ is obtained from $\{\mu_J(0)\}$ by any process consisting of $N$ steps as in
(\ref{section5. Process Type I}) and (\ref{section5. Process Type II}). At the $k-$th step in which $\{\mu_J(k)\}$ is obtained, assume that we apply the recursion
to $J_1$ and $J_2$ in $\mathcal{A}_m(k-1)$. Then $J_1\bigcap
J_2\neq\emptyset$. We divide the $k$-th step into
four cases:\\
\quad\;(a) $J_1$ and $J_2$ are in $\mathcal{C}_m(k-1)$;\qquad\qquad
\quad \;(b) $J_1\in \mathcal{B}_m(k-1)$ and $J_2\in \mathcal{C}_m(k-1)$;\\
\quad\;(c) $J_1\in \mathcal{C}_m(k-1)$ and $J_2\in \mathcal{B}_m(k-1)$;
\quad\;(d) $J_1$ and $J_2$ are in $\mathcal{B}_m(k-1)$.\\
We claim that $\Omega_m(k-1)=\Omega_m(k)$. For $J_1$ and $J_2$ in
the cases (a), (b) and (c), this statement is easily verified. In
the case (d), we need an additional property of $\mathcal{B}_m(k)$.
In other words, Proposition \ref{section5.
prop for Bm} is still true for all $\mathcal{B}_m(k)$ with the same
$\Theta_1$. Since $\mathcal{B}_m(0)$ has the property in Proposition \ref{section5. prop for Bm}, it suffices to
show that $\mathcal{B}_m(1)$ shares this property. Assume in the
first step, the recursion applies to $J_1$ and $J_2$ in
$\mathcal{F}_m$. Observe that $J_1\cap J_2$ and $J_1\cup J_2$ are
the only two subsets which are possibly contained in
$\mathcal{B}_m(1)$ but not in $\mathcal{B}_m(0)$. It follows from
the above proposition that both intersections $(J_1\cap J_2)\cap
\Theta$ and $(J_1\cup J_2)\cap\Theta$ are subsets of $\Theta_1$ or
$\Theta-\Theta_1$. Hence the assumption in the proposition is also
valid for $\mathcal{B}_m(1)$. By induction, it follows that the same result holds for all $\mathcal{B}_m(k)$. Thus in the case (d), we still have $\Omega(k)=\Omega(k-1)$. This implies that $\Omega(0)$ is stable. The proof of Lemma \ref{section5. stable condition} is complete.
\end{proof}

We now turn our attention to prove that $\Omega_m^{\ast}(\infty)$ is stable. Recall that both (\ref{section5. mu S(N) is bounded}) and
(\ref{section5. muJ(N) are bounded}) imply the uniform boundedness
of $\mu_J^\ast(N)$ and $\lambda_{ij}^\ast(N)$. Thus by passing
$\{N\}$ to a subsequence, denoted by $\{N_t\}$, such that
$\mu_J^\ast(N_t)$ and $\lambda_{ij}^\ast(N_t)$ converge to
$\mu_J^\ast(\infty)$ and $\lambda_{ij}^\ast(\infty)$ respectively.
It is easy to see that the limits $\mu_J^\ast(\infty)$ and
$\lambda_{ij}^\ast(\infty)$ still satisfy (\ref{section5. (2)
conditions satisfied by parameters}). By the definition of
$\Omega^{\ast}_m(\infty)$, we have
$$\Omega_m^\ast(\infty)=\sum_{J\in C_m(\infty)}\mu_J^{\ast}(\infty)$$
with $\mathcal{C}_m(\infty)=\{J\in\mathcal{F}_m:\Theta\subseteq J,~\mu_J^\ast(\infty)>0\}$. Let $\mathcal{A}_m(\infty)$ be the class of
all $J\in\mathcal{F}_m$ satisfying $\mu_J^\ast(\infty)> 0$.
Similarly, $B_m(\infty)$ can be defined as (\ref{section5. def of Am
Bm Cm}). With these notations, we claim that $\Omega^{\ast}_m(\infty)$ is stable with respect to $\mu^{\ast}_J(\infty)$. This means that any application of above processes to $\mathcal{A}_m(\infty)$ does not change the value of $\Omega^{\ast}_m(\infty)$. Otherwise, by Lemma
\ref{section5. stable condition}, there exists a continuous chain
$\{J_i\}_{i=1}^{a}$ in $\mathcal{B}_m(\infty)$ satisfying
$\Theta\subseteq \bigcup_{i=1}^a J_i$. For all $t\geq t_0$ with some sufficiently large $t_0$, first observe that $\{\mu_J^\ast(N_t)\}$ has a uniform positive lower bound for each $J\in \mathcal{A}_m(\infty)$. By (\ref{section5. lower bounds between muS(N)}) and related remarks, it follows that
\begin{equation}
\Omega_m^\ast(N_t+a-1)
\geq \Omega_m^\ast(N_t)+\min_{J\in \mathcal{A}(\infty)}\mu_J^\ast(N_t)
\geq \Omega_m^\ast(N_t)+\frac{1}{2}\min_{J\in \mathcal{A}(\infty)}\mu_J^\ast(\infty)
\end{equation}
for all sufficiently large $N_t$. This contradicts the fact that
$\Omega_m^\ast(N)$ converges to $\Omega_m^{\ast}(\infty)$.

Another simple but key observation is
\begin{equation}\label{section5. limits of mu_S(N)}
\Omega_m^{\ast}(\infty)
= 1+\min\limits_{\Theta}\lambda_{ij}^\ast(\infty)
\geq 1+\min\limits_{\Theta}\lambda_{ij}(0).
\end{equation}
In fact, if $\mathcal{B}_m(\infty)$ is empty, then $\mathcal{A}_m(\infty)=\mathcal{C}_m(\infty)$ and $\Omega_m^{\ast}(\infty)=1$. Assume $\mathcal{B}_m(\infty)$ is nonempty. As in the proof of Proposition \ref{section5. prop for Bm}, we can choose a continuous chain $\{I_i\}_{i=1}^{a}$ from $\mathcal{B}_m(\infty)$ such that $\big|\big(\bigcup_{i=1}^a
I_i\big)\bigcap \Theta\big|$ is the largest over all continuous
chains in $\mathcal{B}_m(\infty)$. By Propositon \ref{section5. prop for Bm}, $\big(\bigcup_{i=1}^a I_i\big)\bigcap \Theta$ is a nonempty proper subset of $\Theta$. Hence we can choose
$i_0\in\big(\bigcup_{i=1}^{a}I_i\big)\bigcap\Theta$ and $j_0\in
\Theta$ but $j_0\notin\big(\bigcup_{i=1}^{a}I_i\big)\bigcap\Theta$.
Then it follows from Proposition \ref{section5. prop for Bm} that $\mu_J^{\ast}(\infty)=0$ for all $J\in \mathcal{F}_m$
satisfying $i_0,\;j_0\in J$ and $\Theta \nsubseteqq J$. Then (\ref{section5. (2) conditions
satisfied by parameters}) becomes $\sum_{J\supset
\Theta}\mu_J^{\ast}(\infty)= 1+\lambda_{ij}^{\ast}(\infty)$ with
$(i,j)=(i_0,j_0)$. This implies our claim.

Now the desired inequality (\ref{section5. object function of LI}) is easily verified. In fact, we have that
\begin{eqnarray*}
&&\l(\sum\limits_{\Theta}\alpha_{i,k+1}-n \r)
-\sum\limits_{J\in \mathcal{F}_m} \mu_J(0) \l((|J|-1)n-\sum\limits_{J}\alpha_{ij}\r)\\
&\leq&
\l(\sum\limits_{\Theta}\alpha_{i,k+1}-n \r)
-\sum\limits_{J\in \mathcal{F}_m} \mu_J^\ast(\infty) \l((|J|-1)n-\sum\limits_{J}\alpha_{ij}\r).
\end{eqnarray*}
By the assumption
(\ref{section5. condition for local integrability}), we observe that
$\sum_{\Theta}\alpha_{i,k+1}-n<(|J|-1)n-\sum_{J}\alpha_{ij}$ for all
$J\in\mathcal{F}_m$ satisfying $\Theta\subseteq J$ . Then it follows
from (\ref{section5. limits of mu_S(N)}) that
\begin{eqnarray*}
&&\l(\sum\limits_{\Theta}\alpha_{i,k+1}-n \r)
-\sum_{J\in\mathcal{F}_m}\mu_{J}^{\ast}(\infty)
\l((|J|-1)n-\sum\limits_{J}\alpha_{ij}\r)\\
&\leq&
\l(\sum\limits_{\Theta}\alpha_{i,k+1}-n \r)
-\sum_{\Theta\subset J\in\mathcal{F}_m}\mu_{J}^{\ast}(\infty)
\l((|J|-1)n-\sum\limits_{J}\alpha_{ij}\r) \\
&<&0.
\end{eqnarray*}
By Lemma \ref{section2. existence System of LI}, there exists a
solution $\{\delta_{ij}\}$ to the system $(V.1)$. By the induction
hypothesis for $k-1$, we have completed the proof in the first case
$\sum_{i=2}^m\alpha_{i,k+1}<n$.

Now we treat the second case $\sum_{i=2}^m\alpha_{i,k+1}=n$. By
Lemma \ref{Estimtates of Generalization of Riesz Potential}, it
suffices to show that the following integral is finite,
\begin{equation*}
\int_{(B_1(0))^k}\l(\prod\limits_S |x_i-x_j|^{-\alpha_{ij}}\r)
\l(\sum_\Theta |x_i-x_j|\r)^{n-\sum_\Theta \alpha_{i,k+1}}
\log \frac{2\sum_\Theta |x_i-x_j|}{\sum_{2\le i<j\le m}|x_i-x_j|} dV_S.
\end{equation*}
Since the existence of solutions to ${(V.1)}$ depends only on the conditions (\ref{section5. condition for local integrability}) and the assumption $\sum_{\Theta}\alpha_{i,k+1}>n$, we can find a solution
$\{\delta_{ij}:1\leq i<j\leq m\}$ to the system ${(V.1)}$. Let $\delta_{ij}=0$ for other $(i,j)$. It follows from $\sum_\Theta \alpha_{i,k+1}>n$ that there exist $i_0$
and $j_0$ with $1\leq i_0<j_0\leq m$ such that
$\delta_{i_0,j_0}>0$. Choose any pair $i_1$ and $j_1$ with $2\le i_1<j_1\le m$. For small $\varepsilon>0$, we put
\begin{equation}\label{section5. delta_ij with epsilon pertubation}
\overline{\alpha_{ij}}=\alpha_{ij}+\delta_{ij}
-{\textrm{\boldmath $\delta$}}_{i}^{i_0}{\textrm{\boldmath
$\delta$}}_{j}^{j_0}\varepsilon
+{\textrm{\boldmath $\delta$}}_{i}^{i_1}{\textrm{\boldmath
$\delta$}}_{j}^{j_1}\varepsilon
\end{equation}
for $1\leq i<j \leq k$, where ${\textrm{\boldmath
$\delta$}}_{s}^{t}$ denotes the Kronecker symbol. It is
easily verified that $\{\overline{\alpha_{ij}}\}$ satisfies the
integrability condition (\ref{section5. condition for local integrability}) with sufficiently small $\varepsilon$. By the induction hypothesis, the above integral converges.

The final case is $\sum_{i=2}^m\alpha_{i,k+1}>n$. The integral in
(\ref{section5. Selberg integral}) with
$\boldsymbol{L}_1(x_1,\cdots,x_m)$ in place of the integral with
respect to $x_{k+1}$ is bounded by a constant multiple of
\begin{equation}\label{section5. II Selberg integral}
\quad\int_{(B_1(0))^k}\l(\prod\limits_S |x_i-x_j|^{-\alpha_{ij}}\r)
\l(\sum_\Theta |x_i-x_j|\r)^{-\alpha_{1,k+1}}
\l(\int_{\mathbb{R}^n}\prod\limits_{i=2}^m |x_i-x_{k+1}|^{-\alpha_{i,k+1}}dx_{k+1}\r)dV_S.
\end{equation}
Likewise, our task is to distribute $\alpha_{1,k+1}$ into powers
$\{\alpha_{ij}:1\le i<j \le k\}$ appropriately. Then we will obtain
new parameters $\{\overline{\alpha_{ij}}:1\le i<j \le k+1\}$ with
$|\Theta|=m-1$. By the integrability condition (\ref{section5. condition for local integrability}), we shall solve the following system of linear inequalities,
\begin{equation*}
(V.2)
\begin{cases}
(i)\quad \;\delta_{ij}\geq 0, \quad i<j \quad in \quad \Theta;\\
(ii)\quad \sum\limits_\Theta \delta_{ij}=\alpha_{1,k+1};\\
(iii)\; \sum\limits_{J\cap\Theta} \delta_{ij}
<(|J|-1)n
-\sum\limits_{J}\l(\alpha_{ij}
-\boldsymbol{\delta}_i^1
\boldsymbol{\delta}_j^{k+1}\alpha_{1,k+1}\r),\quad J\in \overline{\mathcal{F}}_m;
\end{cases}
\end{equation*}
where $\overline{\mathcal{F}}_m$ consists of all subsets
$J\subseteq\{1,2,\cdots,k+1\}$ such that $|J\bigcap \Theta|\geq 2$. The existence of solutions can be proved similarly. Indeed, by Lemma
\ref{section2. existence System of LI}, for nonnegative $\lambda_{ij}$, $\theta_1$,
$\theta_2$ and $\mu_J$ with at least one $\mu_J>0$ for some $J$ in
the class $\overline{\mathcal{F}}_m$ satisfying
\begin{equation}\label{section5. equ of param 3d case}
\lambda_{ij}+(\theta_1-\theta_2)-\sum\limits_{J\ni i,j}\mu_J=0, ~~1\leq i<j \leq m,
\end{equation}
we must prove
\begin{equation*}
(\theta_1-\theta_2)\alpha_{1,k+1}
-\sum\limits_{J\in \overline{\mathcal{F}}_m} \mu_J \Big((|J|-1)n-
\sum\limits_{J}\big(\alpha_{ij}-\boldsymbol{\delta}_i^1
\boldsymbol{\delta}_j^{k+1}\alpha_{1,k+1}\big)\Big)<0.
\end{equation*}
We may assume $\theta_1-\theta_2=1$. To follow the argument in
the first case $\sum_{i=2}^m\alpha_{i,k+1}<n$, we shall make some remarks. The first observation is that the inequality (\ref{section5. cliam: object function is
increasing}) is still true with
$\alpha_{ij}-\textrm{\boldmath $\delta$}_{i}^1\textrm{\boldmath
$\delta$}_{j}^{k+1}\alpha_{1,k+1}$ in place of $\alpha_{ij}$. In other words, we have
$$\sum_{s=1}^{2}\sum_{J_s}\Big(\alpha_{ij}-\textrm{\boldmath $\delta$}_{i}^1\textrm{\boldmath $\delta$}_{j}^{k+1}\alpha_{1,k+1}\Big)
\leq
\sum_{J_1\cap J_2}\Big(\alpha_{ij}-\textrm{\boldmath $\delta$}_{i}^1\textrm{\boldmath $\delta$}_{j}^{k+1}\alpha_{1,k+1}\Big)
+\sum_{J_1\cup J_2}\Big(\alpha_{ij}-\textrm{\boldmath $\delta$}_{i}^1\textrm{\boldmath $\delta$}_{j}^{k+1}\alpha_{1,k+1}\Big)$$
for all subsets $J_1$ and $J_2$ of $\{1,2,\cdots,k+1\}$.

The process in (\ref{section5. Process Type I}) and
(\ref{section5. Process Type II}) is also applicable here with
$\mathcal{F}_m$ replaced by $\overline{\mathcal{F}}_m$ in Case I and Case II there. For arbitrary
$\{\mu_J(0)\geq 0:J\in\overline{\mathcal{F}}_m\}$ and
$\{\lambda_{ij}(0)\geq 0:1\leq i<j\leq m\}$ satisfying equations
(\ref{section5. equ of param 3d case}), we can use the above
argument to obtain
\begin{equation*}
\mu_J^\ast(\infty)=\lim_{N_t\rightarrow \infty}\mu_J^\ast(N_t),
\quad
\lambda_{ij}^\ast(\infty)=\lim_{N_t\rightarrow \infty}\lambda_{ij}^\ast(N_t),
\end{equation*}
where $\{\mu_J^\ast(N)\}$ is obtained by one of those processes such that $\Omega_m^\ast(N) =\sum_{\Theta\subset J\in
\overline{\mathcal{F}}_m}\mu_J^\ast(N)$ is the maximum over all
possible processes of $N$ steps. All symbols can be defined parallel to the first case. Similarly, we have
\begin{equation*}
\Omega_m^\ast(\infty)=\lim_{N\rightarrow \infty}\Omega_m^\ast(N)=1+\min_\Theta \lambda_{ij}^{\ast}(\infty).
\end{equation*}
Note that for any $J$ in $\overline{\mathcal{F}}_m$ satisfying $\Theta\subset J$,
\begin{equation*}
\alpha_{1,k+1}
-\Big((|J|-1)n-\sum_{J}\big(\alpha_{ij}-\textrm{\boldmath$\delta$}_{i}^1
\textrm{\boldmath $\delta$}_{j}^{k+1}\alpha_{1,k+1}\big)\Big)<0.
\end{equation*}
Actually, this inequality is obvious if $k+1\in J$ by the
integrability condition (\ref{section5. condition for local
integrability}). Assume $k+1\notin J$. Then the left side of this
inequality equals
\begin{eqnarray*}
\alpha_{1,k+1}+\sum_{J}\alpha_{ij}-\big(|J|-1\big)n
&=&\sum_{J\cup\{k+1\}}\alpha_{ij}-\sum_{i=2}^m\alpha_{i,k+1}-\big(|J|-1\big)n\\
&<& \sum_{J\cup\{k+1\}}\alpha_{ij}-|J|n<0
\end{eqnarray*}
since $\sum_{i=2}^m\alpha_{i,k+1}>n$.

Since $|x_i|\leq 1$ for $2\leq i\leq m$, it is clear that the integral of $\prod_{i=2}^m
|x_i-x_{k+1}|^{-\alpha_{i,k+1}}$ with respect to
$x_{k+1}$ over $\mathbb{R}^n$ is bounded by a constant multiple of the integral over $B_2(0)$. Thus the integral
in (\ref{section5. II Selberg integral}) is less than a constant
multiple of
\begin{equation*}
\int_{\boldsymbol{(B_1(0))^{k+1}}}\prod\limits_{\{1,2,\cdots,k+1\}}
|x_i-x_j|^{-\overline{\alpha_{ij}}}dx_1dx_2\cdots dx_{k+1},
\end{equation*}
where $\overline{\alpha_{ij}}=\big(\alpha_{ij}-\textrm{\boldmath
$\delta$}_{i}^1\textrm{\boldmath
$\delta$}_{j}^{k+1}\alpha_{1,k+1}\big)+\delta_{ij}$ and
$\{\delta_{ij}\}$ is a solution to the system $(V.2)$ with
$\delta_{ij}=0$ when $i\notin\Theta$ or $j\notin\Theta$. Thus we
have reduced the integral in (\ref{section5. II Selberg integral})
to an integral of form (\ref{section5. Selberg integral}) but with
$|\Theta|\leq m-1$. Since the $k+1$ fold integral in the
theorem converges for $|\Theta|=2$, we need at most $m-2$ steps to
reduce the case $|\Theta|=m$ to $|\Theta|=2$. Hence the
integral $(\ref{section5. II Selberg integral})$ is also finite.

Until now, we have obtained the desired conclusion for
$\boldsymbol{L}_{1}$. For other terms $\boldsymbol{L}_{i}$, the
treatment is the same as above. Therefore the proof of Theorem \ref{section5. nec-suff Selberg integral} is complete.
\end{proof}
\begin{remark}\label{section5 Remark 5.1}
The integrability criterion in the theorem is also true for Selberg
integrals on the sphere $S^n$. More precisely, for symmetric and
nonnegative exponents $\alpha_{ij}$, the following Selberg integral
$$\int_{(S^n)^{k+1}}\prod_{1\leq i<j\leq k+1}|\xi_i-\xi_j|^{-\alpha_{ij}}
d\sigma(\xi_1)d\sigma(\xi_2)\cdots d\sigma(\xi_{k+1})<\infty$$ if
and only if {\rm(\ref{section5. condition
for local integrability})} holds. Here $S^n$ is the unit sphere in
$\mathbb{R}^{n+1}$ and $d\sigma$ the induced Lebesgue measure on
$S^n$. In the conformally invariant situation, by using the
conformal equivalence of $S^n$ and $\mathbb{R}^n$, Beckner
{\rm\cite{beckner}} obtained explicitly the sharp constant of the
multilinear functional inequality {\rm(\ref{inequality of
multilinear fractional functionals})} in terms of the above Selberg
integrals. In {\rm\cite{grafakos1}}, Grafakos and Morpurgo
calculated a three fold integral of the above form when
$\alpha_{12}+\alpha_{13}+\alpha_{23}=n$. By using an analogue of
Theorem {\rm\ref{Estimtates of Generalization of Riesz Potential}}
on the sphere $S^n$ and the same argument as in the proof of Theorem {\rm\ref{section5. nec-suff Selberg integral}}, we can prove the above integrability criterion; see {\rm $\S$7} for a generalization of Theorem {\rm\ref{Estimtates of Generalization of Riesz Potential}} on $S^n$.
\end{remark}

Now we shall establish a useful estimate by which the $L^1$
estimate in Theorem \ref{section1. main theorem 2} follows
immediately from Theorem \ref{main theorem 1}.
\begin{theorem}\label{section5. estimate when p=infty}
Assume $\{\alpha_{ij}\}$ and $\{p_i\}$ with $p_{k+1}=\infty$ satisfy the assumptions in Theorem {\rm\ref{section1. main theorem 2}}. Then there exists a finite set $\Delta$. For each $t \in \Delta$ we have $\{\beta_{ij}(t)\geq 0: 1\leq i<j \leq k\}$ such that the datum
$\{p_i,\beta_{ij}(t)\}$ satisfies $(i)$, $(ii)$ and $(a)$ of ${{(iii)}}$ in Theorem {\rm\ref{main theorem 1}}, and the following estimate holds:
\begin{eqnarray}\label{section5. induction L infty estimate}
&&
\int_{\mathbb{R}^{n(k+1)}}{\prod\limits_{i=1}^{k}|f_{i}(x_{i})}|
\prod\limits_{1 \leq i<j\leq k+1}|x_i-x_j|^{-\alpha_{ij}}dx_1dx_2\cdots dx_{k+1}\nonumber\\
&\leq &
C \sum\limits_{t\in\Delta}
\int_{\mathbb{R}^{nk}}
{\prod\limits_{i=1}^{k}|f_{i}(x_{i})|}
\prod\limits_{1 \leq i<j\leq k}|x_i-x_j|^{-\beta_{ij}(t)}dx_1dx_2\cdots dx_{k},
\end{eqnarray}
where the constant $C$ depends only on $\alpha_{1,k+1}$,..., $\alpha_{k,k+1}$ and the dimension $n$.
\end{theorem}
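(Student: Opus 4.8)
The plan is to integrate out the last variable $x_{k+1}$ and then iterate Theorem \ref{Estimtates of Generalization of Riesz Potential}. Put $\Theta=\{i:1\le i\le k,\ \alpha_{i,k+1}>0\}$. Condition ${\rm(ii)}$ applied to the pairs $\{i,k+1\}$ gives $0<\alpha_{i,k+1}<n$ for $i\in\Theta$, while subtracting the first type $(a)$ of ${\rm(iii)}$ for $I=\{1,\dots,k\}$ from ${\rm(i)}$ (recall $1/p_{k+1}=0$) yields $\sum_{i\in\Theta}\alpha_{i,k+1}=\sum_{i=1}^{k}\alpha_{i,k+1}>n$; in particular $|\Theta|\ge 2$. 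Factoring the kernel in {\rm(\ref{section5. induction L infty estimate})} as $\prod_{1\le i<j\le k}|x_i-x_j|^{-\alpha_{ij}}$ times $\prod_{i\in\Theta}|x_i-x_{k+1}|^{-\alpha_{i,k+1}}$, applying Fubini, and then invoking Theorem \ref{Estimtates of Generalization of Riesz Potential} for the $|\Theta|$ points $\{x_i:i\in\Theta\}$ (with $d_\Theta=\sum_{i<j\in\Theta}|x_i-x_j|$), the left side of {\rm(\ref{section5. induction L infty estimate})} is bounded by a constant multiple of $\sum_{u\in\Theta}\int_{\mathbb{R}^{nk}}\prod_{i=1}^{k}|f_i(x_i)|\,\big(\prod_{1\le i<j\le k}|x_i-x_j|^{-\alpha_{ij}}\big)\boldsymbol{L}_u\,dx_1\cdots dx_k$, where $\boldsymbol{L}_u=\boldsymbol{L}_u(\{x_i:i\in\Theta\})$ is as in that theorem. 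It therefore suffices, for each $u\in\Theta$, to dominate $\big(\prod_{1\le i<j\le k}|x_i-x_j|^{-\alpha_{ij}}\big)\boldsymbol{L}_u$ by a finite sum of kernels $\prod_{1\le i<j\le k}|x_i-x_j|^{-\beta_{ij}(t)}$ of the required kind.

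Fix $u$. There are three shapes of $\boldsymbol{L}_u$, treated exactly as in the case $|\Theta|=m$ of the proof of Theorem \ref{section5. nec-suff Selberg integral}. If $\sum_{\Theta-\{u\}}\alpha_{i,k+1}<n$ then $\boldsymbol{L}_u$ equals (a constant times) $d_\Theta^{\,n-\sum_{\Theta}\alpha_{i,k+1}}$ or is $0$; since $d_\Theta\ge|x_i-x_j|$ for all $i,j\in\Theta$, any nonnegative numbers $\{\delta_{ij}:i<j\in\Theta\}$ with $\sum_{\Theta}\delta_{ij}=c:=\sum_{i\in\Theta}\alpha_{i,k+1}-n$ give $\boldsymbol{L}_u\le C\prod_{\Theta}|x_i-x_j|^{-\delta_{ij}}$, and one sets $\beta_{ij}=\alpha_{ij}+\delta_{ij}$ for $i,j\in\Theta$ and $\beta_{ij}=\alpha_{ij}$ otherwise. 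If $\sum_{\Theta-\{u\}}\alpha_{i,k+1}=n$ then $\boldsymbol{L}_u=d_\Theta^{-\alpha_{u,k+1}}\log(2d_\Theta/d_{\Theta-\{u\}})\le C_\varepsilon\,d_\Theta^{\,\varepsilon-\alpha_{u,k+1}}d_{\Theta-\{u\}}^{-\varepsilon}$ for every $\varepsilon>0$, which again has the previous product form after distributing the power $\alpha_{u,k+1}$ over the pairs of $\Theta$ with at least $\varepsilon$ of it placed on pairs missing $u$; a small $\varepsilon$-perturbation of the solution from the first shape works, as in the second case there. If $\sum_{\Theta-\{u\}}\alpha_{i,k+1}>n$ then $\boldsymbol{L}_u=d_\Theta^{-\alpha_{u,k+1}}\int_{\mathbb{R}^n}\prod_{\Theta-\{u\}}|t-x_i|^{-\alpha_{i,k+1}}\,dt$; distributing $\alpha_{u,k+1}$ over the pairs of $\Theta$ via $d_\Theta$ leaves an instance of the same problem with $\Theta$ replaced by $\Theta-\{u\}$, so one inducts on $|\Theta|$, the terminal case being the first shape at $|\Theta|=2$, i.e. $\int_{\mathbb{R}^n}|x_1-t|^{-\alpha_1}|x_2-t|^{-\alpha_2}\,dt\le C|x_1-x_2|^{\,n-\alpha_1-\alpha_2}$. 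Only finitely many exponent families $\{\beta_{ij}(t)\}$ are produced, so $\Delta$ is finite and $C$ depends only on $n$ and $\alpha_{1,k+1},\dots,\alpha_{k,k+1}$.

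It remains to show that, at each stage, the requisite nonnegative weights $\{\delta_{ij}\}$ can be chosen so that $\{\beta_{ij}\}$ and $\{p_i:i\in S\}$ satisfy ${\rm(i)}$, ${\rm(ii)}$ and the first type $(a)$ of ${\rm(iii)}$ of Theorem \ref{main theorem 1} for the index set $S=\{1,\dots,k\}$ (i.e. for the $(k-1)$-linear functional). Condition ${\rm(i)}$ is automatic: using ${\rm(i)}$ for the $(k+1)$-point data and $\sum_{\Theta}\delta_{ij}=c$ one finds $\sum_{i\in S}1/p_i+\sum_{i<j\in S}\beta_{ij}/n=k$. Conditions ${\rm(ii)}$ and $(a)$ of ${\rm(iii)}$ amount to the system of linear inequalities
\begin{equation*}
\begin{cases}
\delta_{ij}\geq 0\ \ (i<j\in\Theta),\qquad \sum_{\Theta}\delta_{ij}=c;\\
\sum_{J\cap\Theta}\delta_{ij}<(|J|-1)n-\sum_{J}\alpha_{ij}\qquad\ \ (J\subseteq S,\ |J|\geq 2);\\
\sum_{I\cap\Theta}\delta_{ij}<|I|n-\sum_{I}n/p_i-\sum_{I}\alpha_{ij}\qquad (\varnothing\neq I\subsetneqq S).
\end{cases}
\end{equation*}
Its non-strict part ($\delta_{ij}\ge0$ and the equality) is solvable since $c>0$, so by Lemma \ref{section2. existence System of LI} the full system is solvable if and only if the corresponding Farkas inequality holds. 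Both ``defect'' functions $J\mapsto(|J|-1)n-\sum_{J}\alpha_{ij}$ and $I\mapsto|I|n-\sum_{I}n/p_i-\sum_{I}\alpha_{ij}$ satisfy the submodular-type inequality {\rm(\ref{section5. cliam: object function is increasing})} (because $J\mapsto\sum_{J}\alpha_{ij}$ is supermodular while $J\mapsto|J|$ and $I\mapsto\sum_{I}n/p_i$ are modular), and each exceeds $c$ on every set containing $\Theta$: for $J\supseteq\Theta$ this is ${\rm(ii)}$ for $J\cup\{k+1\}$, and for $\Theta\subseteq I\subsetneqq S$ it is $(a)$ of ${\rm(iii)}$ for $I\cup\{k+1\}$. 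These are the structural inputs of the monotone process in the proof of Theorem \ref{section5. nec-suff Selberg integral} (Lemma \ref{section5. stable condition}, Proposition \ref{section5. prop for Bm}, and the passage to the supremum $\Omega_m^\ast(\infty)$), and running that process verifies the required Farkas inequality.

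The main obstacle is precisely this last step. Unlike in Theorem \ref{section5. nec-suff Selberg integral}, the active constraints come in two families — the ${\rm(ii)}$-type and the $(a)$-of-${\rm(iii)}$-type — so for a set $K\subseteq\Theta$ the effective threshold on $\sum_{K}\delta_{ij}$ is the \emph{minimum} of a ${\rm(ii)}$-submodular and an $(a)$-submodular quantity, which need not itself be submodular. One must therefore run the process keeping track of the type of each active constraint and check that at every merge step the types of the two new constraints (the one on $J_1\cap J_2$ and the one on $J_1\cup J_2$, the latter forced to be ${\rm(ii)}$ when $J_1\cup J_2=S$) can be chosen so that the objective does not decrease; this is a somewhat more delicate case analysis than in Theorem \ref{section5. nec-suff Selberg integral}, but uses only inequality {\rm(\ref{section5. cliam: object function is increasing})} and its $|J_1\cap J_2|\ge1$ variant. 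Once this is settled, the induction on $|\Theta|$, the base case $|\Theta|=2$, and the $\varepsilon$-perturbations in the logarithmic case are routine, and finiteness of $\Delta$ is clear.
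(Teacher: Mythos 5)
Your overall route is the same as the paper's: integrate out $x_{k+1}$ via Theorem \ref{Estimtates of Generalization of Riesz Potential}, treat the three shapes of $\boldsymbol{L}_u$ (with an $\varepsilon$-perturbation in the logarithmic case and an induction on $|\Theta|$ in the third case), and produce the new exponents $\beta_{ij}$ by solving a linear system through Lemma \ref{section2. existence System of LI}. The genuine gap is that the step you yourself call ``the main obstacle'' --- verifying the Farkas inequality for the combined system with both families of constraints --- is precisely the core of the paper's proof, and you only assert it. The paper writes each constraint with right-hand side the \emph{minimum} of the (ii)-defect $A_J=(|J|-1)n-\sum_J\alpha_{ij}$ and the (iii)(a)-defect $B_J=|J|n-\sum_J n/p_i-\sum_J\alpha_{ij}$ (system $(V.3)$), proves the minimum version of the superadditivity inequality (this needs the cross estimates $A_{J_1\cap J_2}+B_{J_1\cup J_2}\le A_{J_1}+B_{J_2}$, not merely separate submodularity of the two families), and then runs the merging process. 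But monotonicity of that process only yields the weak bound (\ref{section5. weak object function II}): the dual mass can concentrate in the limit on $J=S$, where $B_S$ equals exactly the amount $\sum_\Theta\alpha_{i,k+1}-n$ being distributed, so the objective tends to $0$ rather than to something negative. Your proposed fix --- track types and force the union to be of type (ii) when $J_1\cup J_2=S$ --- does not restore monotonicity: it would require $B_{I_1\cap I_2}+A_S\le B_{I_1}+B_{I_2}$, while submodularity only gives this with $B_S$ in place of $A_S$, and $A_S-B_S=\sum_S n/p_i-n>0$ here, so the inequality can genuinely fail (e.g.\ when the cross exponents between $I_1\setminus I_2$ and $I_2\setminus I_1$ vanish). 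The paper's resolution is a separate analysis of the initial multipliers: either some $\lambda_{ij}(0)>0$, in which case (\ref{section5. obj-fun <0 for lambda>0}) applies; or all $\lambda_{ij}(0)=0$, in which case either some proper $J\supseteq\Theta$ already carries positive mass, or one finds $J_1,J_2\in\mathcal{B}_m(0)$ with nonempty intersection and incomparable traces on $\Theta$, and a single merge creates a positive $\lambda_{ij}(1)$, whence strict negativity. None of this strictness argument appears in your proposal, so the central claim remains unproved.

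A secondary point: in the third shape ($\sum_{\Theta-\{u\}}\alpha_{i,k+1}>n$) the distribution of $\alpha_{u,k+1}$ must be done so that the reduced $(k+1)$-point data, now with $\Theta$ replaced by $\Theta-\{u\}$, still satisfies the hypotheses of the theorem being proved; this requires a different system (the paper's $(V.4)$), whose constraints run over subsets of $\{1,\dots,k+1\}$ containing $k+1$ and use the modified quantities $\boldsymbol{B}_J$. Your displayed system only covers the first shape, so the induction on $|\Theta|$ is not actually set up. The reduction of the left-hand side via Theorem \ref{Estimtates of Generalization of Riesz Potential}, the $m=2$ base case, and the finiteness of $\Delta$ are fine.
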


\begin{proof}
Let $\Theta$ consist of those $i\in S=\{1,2,\cdots,k\}$ such that $\alpha_{i,k+1}>0$.
By assumptions in Theorem \ref{section1. main theorem 2} for $I=S$,
we have
\begin{equation*}
\sum_\Theta \alpha_{i,k+1}=\frac n{p_{k+1}} +\sum_S\alpha_{i,k+1}>n.
\end{equation*}
To reduce mapping properties of the $k+1-$linear functional
$\Lambda$ to that of a $k-$linear one, we need estimate the
following integral with respect to $x_{k+1}$,
\begin{equation*}
I(x_i:i\in \Theta)=\int_{\mathbb{R}^n}
\prod\limits_{\Theta}|x_i-x_{k+1}|^{-\alpha_{i,k+1}}dx_{k+1}.
\end{equation*}
By the symmetry of parameters, we may assume
$\Theta=\{1,2,\cdots,m\}$ with $m\geq 2$. By Lemma \ref{Estimtates
of Generalization of Riesz Potential}, we claim that (\ref{section5. induction L infty estimate}) is still true if $I(x_i:i\in \Theta)$ is
replaced by each term $\boldsymbol{L}_{i}$. We first prove this
statement for $m=2$. Observe that the integral $I(x_1,x_2)$ equals a constant multiple of
$|x_1-x_2|^{n-\alpha_{1,k+1}-\alpha_{2,k+1}}$. Put
$
\beta_{ij}
=\alpha_{ij}+{\textrm{\boldmath $\delta$}}_i^1{\textrm{\boldmath $\delta$}}_j^2(\alpha_{1,k+1}+\alpha_{2,k+1}-n)
$
for $1\leq i<j \leq k$. Then we see that $\{\beta_{ij}\}$
and $\{p_i\}$ satisfy (i), (ii) and (a) of (iii) in Theorem \ref{main theorem 1}. Thus our claim is true in the case $m=2$.

For $3\leq m \leq k$, we claim that there exist finite families $\{\beta_{ij}(t)\}$ such that
\begin{eqnarray}\label{section5. II induction L infty estimate}
&&
\int_{\mathbb{R}^{nk}}\prod\limits_{i=1}^{k}|f_{i}(x_{i})|
 \l({\prod\limits_{S}|x_i-x_j|^{-\alpha_{ij}}}\r)
\l(\sum\limits_{\Theta}\boldsymbol{L}_{i}(x_1,\cdots,x_m)\r)dx_1dx_2\cdots dx_{k} \nonumber\\
&\leq &
C \sum\limits_{t\in \Delta} \int_{\mathbb{R}^{nk}}\prod\limits_{i=1}^{k}|f_{i}(x_{i})|
 {\prod\limits_{S}|x_i-x_j|^{-\beta_{ij}(t)}}dx_1dx_2\cdots dx_{k},
\end{eqnarray}
where $\Delta$ is a finite set and $\{p_i,\beta_{ij}(t)\}$ satisfy conditions $(i)$, $(ii)$ and $(a)$ of $(iii)$ in Theorem $\ref{main theorem 1}$. Since arguments for different $\boldsymbol{L}_i$'s are similar, we need only establish the desired estimate concerning $\boldsymbol{L}_{1}$. As shown in Lemma \ref{Estimtates of Generalization of Riesz Potential}, there are three
possible cases for $\boldsymbol{L}_{1}$. The treatment of two
previous cases $\sum_{i=2}^m\alpha_{i,k+1}\leq n$ will be reduced to
the following system of linear inequalities:
\begin{equation}
(V.3)
\begin{cases}
(i)\quad\; \delta_{ij}\geq 0, \quad 1\le i <j \le m;\nonumber\\
(ii) \quad \sum\limits_{\Theta}\delta_{ij}=\sum\limits_{\Theta}\alpha_{i,k+1}-n;
\nonumber\\
(iii) \;\sum\limits_{J\cap\Theta}\delta_{ij}<
\l((|J|-1)n-\sum\limits_{J}\alpha_{ij}\r)\wedge
\l(|J|n-\sum\limits_{J}\alpha_{ij}-\sum\limits_J \frac{n}{p_i}\r)
\quad {\textrm{for $J\in \mathcal{F}_m$ and $J\neq S$}}; \nonumber\\
(iv) \; \sum\limits_{S\cap\Theta}\delta_{ij}\leq
\l((k-1)n-\sum\limits_{S}\alpha_{ij}\r)\wedge
\l(kn-\sum\limits_{S}\alpha_{ij}-\sum\limits_S \frac{n}{p_i}\r). \nonumber
\end{cases}
\end{equation}
Recall that $\mathcal{F}_m$ is the class of all subsets $J$ of $S$ which contains at least two members in $\{1,2,\cdots,m\}$. We shall point out that $(iv)$ is an equality. By $(ii)$ in the system
$(V.3)$, this observation follows from
$\sum_{S\cup\{k+1\}}\alpha_{ij}<kn$ and the condition $(i)$ in
Theorem \ref{main theorem 1}. For this reason, the system $(V.3)$ is equivalent to the system of inequalities (i), (ii) and (iii). We add $(iv)$ for convenience of notations. As in the previous theorem, we shall also use Lemma \ref{section2. existence System of LI} to prove existence of a solution. Assume $\lambda_{ij}$, $\theta_1$, $\theta_2$ and $\mu_J$ are nonnegative numbers satisfying
\begin{equation}\label{section5. (1)' cond by parameters}
\lambda_{ij}-\sum\limits_{J\ni i,j}\mu_J
+(\theta_1-\theta_2)=0,~~1\leq i<j\leq m.
\end{equation}
Here there is at least one $\mu_J>0$ for some proper subset $J$ of
$S$ in the class $\mathcal{F}_m$. Under these conditions, we have to prove
\begin{eqnarray}\label{section5. object function II}
&&(\theta_1-\theta_2)\l(\sum\limits_{\Theta}\alpha_{i,k+1}-n \r)
\nonumber\\
&<&\sum\limits_{J\in {\mathcal{F}}_m} \mu_J \l((|J|-1)n-
\sum\limits_{J}\alpha_{ij}
\r)\wedge
\l(|J|n-
\sum\limits_{J}\alpha_{ij}
-\sum\limits_J\frac{n}{p_i}\r).
\end{eqnarray}
This inequality is obviously true if $\theta_1-\theta_2\leq 0$ since
$\mu_J>0$ for some proper subset $J\in \mathcal{F}_m$ of $S$. Thus
it suffices to show the inequality for $\theta_1-\theta_2> 0$. By
dilation, we may assume $\theta_1-\theta_2=1$. There is an important
observation like (\ref{section5. cliam: object function is
increasing}) given as follows:
\begin{eqnarray}\label{section5. cliam II: object function is
increasing}
&&\Big((|J_1\cap J_2|-1)n
-\sum\limits_{J_1\cap J_2}\alpha_{ij}\Big)
\wedge
\Big(|J_1\cap J_2|n
-\sum\limits_{J_1\cap J_2}\alpha_{ij}-\sum\limits_{J_1\cap J_2}
\frac{n}{p_i}\Big)\nonumber\\
&&+
\Big((|J_1\cup J_2|-1)n
-\sum\limits_{J_1\cup J_2}\alpha_{ij}\Big)
\wedge
\Big(|J_1\cup J_2|n
-\sum\limits_{J_1\cup J_2}\alpha_{ij}-\sum\limits_{J_1\cup J_2}
\frac{n}{p_i}\Big)\nonumber\\
&\leq&
\sum\limits_{s=1}^2\l\{\Big((|J_s|-1)n
-\sum\limits_{J_s}\alpha_{ij}\Big)
\wedge
\Big(|J_s|n
-\sum\limits_{J_s}\alpha_{ij}-\sum\limits_{J_s}\frac{n}{p_i}
\Big)\r\}
\end{eqnarray}
for all subsets $J_1$ and $J_2$ of $S$. To show this inequality, we have to verify it in all possible cases.

Case (i): $\sum_{J_1\cup J_2}n/p_i\leq n$.

The above inequality is just (\ref{section5. cliam: object function is increasing}). We also note that (\ref{section5. cliam: object function is increasing}) becomes an equality if and only if
\begin{equation}\label{section5. for which i,j a_ij=0}
\alpha_{ij}=0,\quad (i,j)\in \{(s,t):s<t,s,t\in J_1\cup J_2\}-
\bigcup\limits_{i=1}^2\{(s,t):s<t,s,t \in J_i\}.
\end{equation}

Case (ii): $\sum_{J_1\cap J_2}n/p_i\geq n$.

It is true that
\begin{eqnarray*}
&&\Big(|J_1\cap J_2|n
-\sum\limits_{J_1\cap J_2}\alpha_{ij}-\sum\limits_{J_1\cap J_2}
\frac{n}{p_i}\Big)
+\Big(|J_1\cup J_2|n
-\sum\limits_{J_1\cup J_2}\alpha_{ij}-\sum\limits_{J_1\cup J_2}
\frac{n}{p_i}\Big)\nonumber\\
&\leq &\sum\limits_{s=1}^2\Big(|J_s|n
-\sum\limits_{J_s}\alpha_{ij}-\sum\limits_{J_s}\frac{n}{p_i}\Big),
\end{eqnarray*}
where the equality is valid if and only if
(\ref{section5. for which i,j a_ij=0}) holds.

Case (iii): $\sum_{J_1\cap J_2}n/p_i\leq n$ and $\sum_{J_1\cup J_2}n/p_i\geq n$, but $\sum_{J_1}n/p_i\leq n$ and $\sum_{J_2}n/p_i\geq n$.

We have
\begin{eqnarray*}
&&\Big((|J_1\cap J_2|-1)n -\sum\limits_{J_1\cap J_2}\alpha_{ij}\Big)
+\Big(|J_1\cup J_2|n -\sum\limits_{J_1\cup J_2}\alpha_{ij}
-\sum\limits_{J_1\cup
J_2}\frac{n}{p_i}\Big)\nonumber\\
&\leq& \Big((|J_1|-1)n -\sum\limits_{J_1}\alpha_{ij}\Big)
+\Big(|J_2|n -\sum\limits_{J_2}\alpha_{ij}
-\sum\limits_{J_2}\frac{n}{p_i}\Big)\nonumber.
\end{eqnarray*}
The equality is true if and only if
(\ref{section5. for which i,j a_ij=0}) holds and $p_{i}=\infty$ for
$i$ in $J_1$ but not in $J_2$. The remaining cases of (\ref{section5.
cliam II: object function is increasing}) can be proved similarly.

Case (iv): $\sum_{J_1\cap J_2}n/p_i\leq n$ and $\sum_{J_1\cup J_2}n/p_i\geq n$, but $\sum_{J_1}n/p_i\geq n$ and $\sum_{J_2}n/p_i\leq n$.

As in Case (iii), it is clear that
\begin{eqnarray*}
&&\Big((|J_1\cap J_2|-1)n -\sum\limits_{J_1\cap J_2}\alpha_{ij}\Big)
+\Big(|J_1\cup J_2|n -\sum\limits_{J_1\cup J_2}\alpha_{ij}
-\sum\limits_{J_1\cup
J_2}\frac{n}{p_i}\Big)\nonumber\\
&\leq& \Big(|J_1|n -\sum\limits_{J_1}\alpha_{ij}-\sum\limits_{J_1}\frac{n}{p_i}\Big)
+\Big((|J_2|-1)n -\sum\limits_{J_2}\alpha_{ij}\Big)\nonumber.
\end{eqnarray*}

Combining above results, we see that if (\ref{section5. cliam II:
object function is increasing}) becomes an equality then we must have (\ref{section5. for which i,j a_ij=0}). This fact will be used
later.

To show the inequality (\ref{section5. object function II}), we also need the recursion in (\ref{section5. Process Type I}) and
(\ref{section5. Process Type II}). By the inequality (\ref{section5. cliam II:
object function is increasing}), the objective function also increases as the recursion continues. In fact, we have an analogue of (\ref{section5.
object function is increasing}) in the present situation,
\begin{eqnarray}\label{section5.II obj-func is increasing}
& &
-\sum_{J\in \mathcal{F}_m}
\mu_J(N-1)\left((|J|-1)n-\sum\limits_{J}\alpha_{ij}\right)
\wedge
\left(|J|n-\sum\limits_{J}\alpha_{ij}-\sum\limits_J\frac{n}{p_i}
\right) \nonumber\\
&\leq&
-\sum_{J\in \mathcal{F}_m}
\mu_J(N)\left((|J|-1)n-\sum\limits_{J}\alpha_{ij}\right)
\wedge
\left(|J|n-\sum_{J}\alpha_{ij}-\sum\limits_J\frac{n}{p_i}
\right)
\end{eqnarray}
where $\mu_J(N)$ are obtained by the recursion in (\ref{section5. Process Type I}) and (\ref{section5. Process Type II}).

We shall follow some notations in the proof of Theorem
\ref{section5. nec-suff Selberg integral}. Let
$\{\mu_J^\ast(N)\}$ be obtained by one of processes consisting of
$N$ steps such that $\Omega_m^\ast(N)$ achieves its maximum. By a similar argument as in the proof of Theorem
\ref{section5. nec-suff Selberg integral}, we can
obtain $\{\mu_J^\ast(N)\}$ and $\{\lambda_{ij}^\ast(N)\}$ for
initial data $\{{\mu}_J(0)\}$ and $\{{\lambda}_{ij}(0)\}$ satisfying (\ref{section5. (1)' cond by parameters}) with $\theta_1-\theta_2=1$. By passing to a subsequence $\{N_t\}$, we also use $\mu_J^\ast(\infty)$ and
$\lambda_{ij}^\ast(\infty)$ to denote the limits of
$\mu_J^\ast(N_t)$ and $\lambda_{ij}^\ast(N_t)$, respectively. It is clear that (\ref{section5. limits of mu_S(N)}) is also true and hence $\Omega_m^\ast(\infty)\geq 1$. The following argument is somewhat different depending on whether $\mu_S^\ast(\infty)=1$ and $\mu_J^\ast(\infty)=0$ for all proper subsets $J$ of $S$ in $\mathcal{F}_m$. Observe that
 $$\sum\limits_{\Theta}\alpha_{i,k+1}-n=
\l((|S|-1)n-\sum\limits_{S}\alpha_{ij}\r)\wedge
\l(|S|n-\sum\limits_{S}\alpha_{ij}-\sum\limits_S\frac{n}{p_i}\r)$$
and the right side equals
$|S|n-\sum_{S}\alpha_{ij}-\sum_S{n}/{p_i}$. And we also have
 $$\sum\limits_{\Theta}\alpha_{i,k+1}-n<
\l((|J|-1)n-\sum\limits_{J}\alpha_{ij}\r)\wedge
\l(|J|n-\sum\limits_{J}\alpha_{ij}-\sum\limits_J\frac{n}{p_i}\r)$$
for all proper subsets $J$ of $S$ satisfying $\Theta\subset J$. By
(\ref{section5.II obj-func is increasing}), we have
\begin{eqnarray*}
&&-\sum\limits_{J\in \mathcal{F}_m}
\mu_J(0)\Big((|J|-1)n-\sum\limits_{J}\alpha_{ij}\Big)
\wedge
\Big(|J|n-\sum\limits_{J}\alpha_{ij}-\sum\limits_J\frac{n}{p_i}
\Big)\\
&\leq&
-\sum\limits_{J\in \mathcal{F}_m}
\mu_J^\ast(\infty)\Big((|J|-1)n-\sum\limits_{J}\alpha_{ij}\Big)
\wedge
\Big(|J|n-\sum\limits_{J}\alpha_{ij}-\sum\limits_J\frac{n}{p_i}
\Big).
\end{eqnarray*}
Let $\boldsymbol{H}(\lambda_{ij},\mu_J)$ be the objective function
\begin{equation*}\label{section5. def of the function H}
\left(\sum_{\Theta}\alpha_{i,k+1}-n \right)
-\sum_{J\in {\mathcal{F}}_m} \mu_J \left((|J|-1)n-
\sum_{J}\alpha_{ij}
\right)\wedge\left(|J|n-\sum_{J}\alpha_{ij}-\sum\limits_J\frac{n}{p_i}\right).
\end{equation*}
Recall that we have $\Omega_m^{\ast}(\infty)
= 1+\min\limits_{\Theta}\lambda_{ij}^\ast(\infty)$.

Now we divide the proof into three cases.

Case (i): $\Omega_m^\ast(\infty)>1$.

The inequality (\ref{section5. object function II}) follows immediately. Since
$\boldsymbol{H}(\lambda_{ij}(0),\mu_J(0))
\leq \boldsymbol{H}(\lambda_{ij}^\ast(\infty),\mu_J^\ast(\infty))$,
$$\boldsymbol{H}(\lambda_{ij}^{\ast}(\infty),\mu_J^{\ast}(\infty))
< \Omega_m^\ast(\infty)\Big(\sum_\Theta\alpha_{i,k+1}-n\Big)
-\sum_{J\in
C_m(\infty)}\mu_J^{\ast}(\infty)\Big((|J|-1)n-\sum_J\alpha_{ij}\Big)
\leq 0,$$
where $C_m(\infty)=\{J\in\mathcal{F}_m:\Theta\subseteq J,~\mu_J^{\ast}(\infty)>0\}$.

Case (ii): $\Omega_m^\ast(\infty)=1$ and $\mu_J^\ast(\infty)>0$ for some proper subset $J$ of $S$.

As in Case (i), we also have $\boldsymbol{H}(\lambda_{ij}(0),\mu_J(0))<0$.

Before we consider Case (iii), we point out that Case (i) and (ii) contain a special case, i.e., $\Omega_m(0)<1$ and all $\lambda_{ij}(0)=0$. In fact, this implies that there exists a positive
$\mu_J(0)$ for some $J\in\mathcal{F}_m$ satisfying $\Theta\nsubseteqq J$. Now we claim that there exist $J_1$ and $J_2$ in $\mathcal{B}_m(0)$ with nonempty intersection such that
\begin{equation}
\quad \quad J_1\bigcap \Theta \nsubseteq J_2\bigcap \Theta \quad
\textrm{and} \quad J_2\bigcap \Theta \nsubseteq J_1\bigcap \Theta.
\end{equation}
Assume the converse. Then either $J_1\bigcap \Theta \subset J_2\bigcap \Theta$
or $J_2\bigcap \Theta \subset J_1\bigcap \Theta$ is true for all $J_1$ and $J_2$ in $\mathcal{B}_m(0)$ with $J_1\bigcap J_2\neq \emptyset$. Choose a $J_0\in \mathcal{B}_m(0)$ such that
\begin{equation*}
\Big|J_0\bigcap \Theta \Big|=\max_{J\in \mathcal{B}_m(0)}\Big|J \bigcap \Theta\Big|.
\end{equation*}
Then $J\bigcap\Theta\subset J_0\bigcap\Theta$ for all $J\in \mathcal{B}_m(0)$. Since
$J_0\bigcap \Theta$ is a proper subset of $\Theta$, we may choose
$i_0\in\Theta$ but $i_0\notin J_0\bigcap\Theta$. Choose a $j_0\in
J_0\bigcap\Theta$ arbitrarily. Then by the choice of $J_0$, we see
that all $\mu_J(0)=0$ for all $J\in\mathcal{F}_m$ satisfying
$i_0,j_0\in J$ and $\Theta\nsubseteqq J$. Then we have $\Omega_m(0)=1$  by the equation $\sum_{J\ni i,j}\mu_J(0)=1$ with $(i,j)=(i_0,j_0)$ or
$(i,j)=(j_0,i_0)$. As a consequence, $\mathcal{B}_m(0)$ becomes an
empty class which contradicts our assumption $\Omega_m(0)<1$. Recall that we have assumed $\Theta=\{1,2\cdots,m\}$. Choose $J_1,J_2\in
\mathcal{B}_m(0)$ with $J_1\bigcap J_2\neq \emptyset$ satisfy
\begin{eqnarray*}
&& J_1\bigcap \{1,2,\cdots,m\}
\nsubseteq
J_2\bigcap \{1,2,\cdots,m\}\\
&&
J_2\bigcap \{1,2,\cdots,m\}
\nsubseteq
J_1\bigcap \{1,2,\cdots,m\}.
\end{eqnarray*}
Then we apply the recursion (\ref{section5. Process Type I}) or
(\ref{section5. Process Type II}) to $J_1$ and $J_2$ and then obtain
$\{\mu_J(1)\}$ and $\{\lambda_{ij}(1)\}$. By this process, we will
obtain at least one $\lambda_{ij}(1)>0$. Indeed, we may choose
\begin{eqnarray*}
&& i\in J_1\bigcap \{1,2,\cdots,m\}\quad but \quad
i\notin J_2\bigcap \{1,2,\cdots,m\}\\
&&
j\in J_2\bigcap \{1,2,\cdots,m\}\quad but \quad
j\notin J_1\bigcap \{1,2,\cdots,m\}.
\end{eqnarray*}
It follows from $J_1,J_2\in \mathcal{A}_m(0)$ that $\lambda_{ij}(1)=\mu_{J_1}(0) \wedge \mu_{J_2}(0)>0$. Thus the datum $\{\mu_J(1),\lambda_{ij}(1)\}$ has been considered in Case (i) and (ii). Since the objective function
$\boldsymbol{H}(\lambda_{ij}(N),\mu_J(N))$ increases as $N$, we see that
\begin{equation*}
\boldsymbol{H}\Big(\lambda_{ij}(0)=0,\mu_J(0) \Big) \leq
\boldsymbol{H}\Big(\lambda_{ij}(1),\mu_J(1) \Big)<0.
\end{equation*}

Case (iii): $\Omega_m^\ast(\infty)=1$ and $\mu_J^\ast(\infty)=0$ for all proper subsets $J\in \mathcal{F}_m$ of $S$.

By previous analysis, it suffices to consider $\Omega_m(0)=1$.
Recall $\Omega_m(0)=\sum_{J\in \mathcal{C}_m(0)}\mu_J(0)$. By equation
$\sum_{J\ni i,j}\mu_J(0)=1$, we have $\mu_J(0)=0$ for
all $J\in\mathcal{F}_m$ satisfying $\Theta\nsubseteqq J$. The choice of $\{\mu_J(0)\}$ implies the existence of $\mu_J(0)>0$ for some proper subset $J$ of $S$ in the class $C_m(0)$. Thus
$$\boldsymbol{H}\Big(\;\lambda_{ij}(0)=0,\;\mu_J(0)\Big)<0.$$

Combining above results, we conclude that there exists at least one solution to the system $(V.3)$.

Let $\{\delta_{ij}\}$ be a solution to the system $(V.3)$. Here we
also put $\delta_{ij}=0$ if either $i$ or $j$ does not lie in $\Theta$. If
$\sum_{i=2}^m\alpha_{i,k+1}<n$, then our claim (\ref{section5. II
induction L infty estimate}) is true by setting
$\beta_{ij}=\alpha_{ij}+\delta_{ij}$. In the case
$\sum_{i=2}^k\alpha_{i,k+1}=n$, the treatment is similar as
(\ref{section5. delta_ij with epsilon pertubation}) and we omit the details here.

Now we turn our attention to the final case
$\sum_{i=2}^m\alpha_{i,k+1}>n$. Then $\boldsymbol{L}_1$ equals
\begin{equation*}
\l(\sum\limits_\Theta|x_i-x_j|\r)^{-\alpha_{1,k+1}}
\int_{\bR^n}\prod\limits_{i=2}^m
\big|x_i-x_{k+1}\big|^{-\alpha_{i,k+1}}
dx_{k+1}.
\end{equation*}
In this case, we shall adapt the treatment of the system $(V.2)$. The corresponding system of linear inequalities is given as follows:
\begin{equation}
(V.4)
\begin{cases}
(i)\qquad \delta_{ij}\geq 0, \quad 1\le i <j \le m;\nonumber\\
(ii)\quad\;\,\sum\limits_{\Theta}\delta_{ij}=\alpha_{1,k+1};
\nonumber\\
(iii)\quad\sum\limits_{J\cap \Theta}\delta_{ij}<
\Big((|J|-1)n-\boldsymbol{B}_J \Big)
\wedge
\l(|J|n-\boldsymbol{B}_J-\sum\limits_J \frac{n}{p_i}\r),
\quad
J\in \overline{\mathcal{F}}_m,\; J\neq S\cup \{k+1\}; \nonumber\\
(iv) \sum\limits_{S\cup\{k+1\}\cap\Theta}\delta_{ij}\leq
\l(kn-\boldsymbol{B}_{S\cup\{k+1\}}\r)\wedge
\l((k+1)n-\boldsymbol{B}_{S\cup\{k+1\}}
-\sum\limits_{S\cup\{k+1\}} \frac{n}{p_i}\r); \nonumber
\end{cases}
\end{equation}
where $\overline{\mathcal{F}}_m$ is the class of all subsets
$J\subset\{1,2,\cdots,k+1\}$ with $|J\cap\{1,2,\cdots,m\}|\geq 2$ and $\boldsymbol{B}_J$ is given by, for each $J\in\overline{\mathcal{F}}_m$,
$$\boldsymbol{B}_J=\sum\limits_{J}\l(\alpha_{ij}
-\boldsymbol{\delta}_i^1
\boldsymbol{\delta}_j^{k+1}\alpha_{1,k+1}\r).$$

The existence of a solution to $(V.4)$ can be proved similarly.
The argument can be outlined as follows. For nonnegative
$\lambda_{ij}$, $\mu_J$, $\theta_1$ and $\theta_2$ satisfying
\begin{equation}
\lambda_{ij}-\sum\limits_{J\ni i,j}\mu_J +(\theta_1-\theta_2)=0
,\quad 1\leq i<j\leq m,
\end{equation}
where there exists one $\mu_J>0$ for some proper subset $J$ of
$S\cup \{k+1\}$ in the class $\overline{\mathcal{F}}_m$, it is
enough to show that
\begin{equation}\label{section5. L infty estimate for sum>n}
(\theta_1-\theta_2)\alpha_{1,k+1}-
\sum\limits_{J\in \overline{\mathcal{F}}_m}
\mu_J\Big((|J|-1)n-\boldsymbol{B}_J\Big)
\wedge
\Big(|J|n-\boldsymbol{B}_{J}-\sum\limits_J\frac{n}{p_i}\Big)<0.
\end{equation}
The above inequality is obvious for $\theta_1-\theta_2\leq 0$. By
scaling, we may assume $\theta_1-\theta_2=1$. In this setting, the
argument is the same as the proof of existence of solutions to
the system $(V.2)$ and $(V.3)$. We omit the details here.

Put $\delta_{ij}=0$ for $i\notin\Theta$ or $j\notin\Theta$. Let
$\{\delta_{ij}:1\leq i<j \leq m\}$ be a solution to the system
$(V.4)$. Set
$$\overline{\alpha_{ij}}
=\big(\alpha_{ij}-\textrm{\boldmath $\delta$}_{i}^1\textrm{\boldmath
$\delta$}_{j}^{k+1}\alpha_{1,k+1}\big) +\delta_{ij}$$ for $1\leq i<j
\leq k+1$. Then the datum $\{\overline{\alpha_{ij}},p_i\}$ still satisfies assumptions in the theorem. Moreover, the left side integral in (\ref{section5. II
induction L infty estimate}) with $\boldsymbol{L}_1$ in place of
$\sum_{\Theta}\boldsymbol{L}_i$ is bounded by a constant multiple of
$$
\int_{\mathbb{R}^{n(k+1)}}\prod\limits_{i=1}^{k}|f_{i}(x_{i})|
 {\prod\limits_{1\leq i<j \leq k+1} |x_i-x_j|^{-\overline{\alpha_{ij}}}}dx_1dx_2\cdots dx_{k+1}
$$
which reduces $|\Theta|=m$ to $|\Theta|=m-1$.

Repeating the above argument finite times, we will obtain the
desired inequality in the theorem.
\end{proof}

\section{Proof of Theorem \ref{section1. main theorem 2}}
In this section, we shall first give another proof of Theorem \ref{section5. nec-suff Selberg integral}, i.e., the integral (\ref{multilinear functional}) is absolutely convergent for arbitrary $f_i\in C_0^\infty$ if and only if $\sum_J\alpha_{ij}<(|J|-1)n$ for all subsets $J\subseteq\{1,2,\cdots,k+1\}$ with $|J|\geq 2$. Of course, the existence of such nonnegative numbers $\{\alpha_{ij}\}$ is obvious. For any given $\{\alpha_{ij}\}$ satisfying this integrability condition, a natural question arises whether there is a set of positive numbers $\{p_i\}$
such that $\{\alpha_{ij}\}$ and $\{p_i\}$ satisfy conditions $(i)$,
$(ii)$ and $(iii)$ in Theorem \ref{main theorem 1}. If this is true, we will obtain the local integrability of the integral (\ref{multilinear
functional}). In fact, the answer is affirmative except some trivial cases. For example, the simplest case in which all $\alpha_{ij}=0$ should be ruled out since the boundedness of $\Lambda$ is valid only if all $p_i=1$. Moreover, we may further
assume that for any given $i$ not all $\alpha_{ij}$ are zero with $j$ ranging over $\{1,2,\cdots,k+1\}$. More precisely, the existence of $\{p_i\}$ can be stated as follows.

\begin{theorem}\label{section 6. exi of infty pairs p}
Assume $\alpha_{ij}\geq 0$ satisfy the system of inequalities $(ii)$ in
Theorem {\rm\ref{main theorem 1}} and
\begin{equation}\label{section 6. Condition for undecomposable of 1,k+1}
\sum\limits_{i\in J}\sum\limits_{j\in J^c}\alpha_{ij}>0
\end{equation}
for any nonempty proper subset $J$ of $\{1,2,\cdots,k+1\}$. Then
there exist infinitely many $\{p_i\}$ such that
\begin{equation*}
(VI.1)
\begin{cases}
(i)\quad  1<p_i<\infty,\quad 1\leq i \leq k+1;\nonumber\\
(ii)\; \sum\limits_{i=1}^{k+1}\frac 1{p_i}
+\sum\limits_{1\leq i<j \leq k+1}\frac{\alpha_{ij}}{n}=k+1;\nonumber\\
(iii)\; \sum\limits_J\frac 1{p_i}
+\sum\limits_J\frac {\alpha_{ij}}{n}<|J|,\quad J\neq \emptyset,
\quad J\subsetneqq \{1,2,\cdots,k+1\}.\nonumber
\end{cases}
\end{equation*}
\end{theorem}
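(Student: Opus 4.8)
The plan is to pass to the complementary variables $y_i=1-1/p_i$ and recast $(VI.1)$ as a system of strict linear inequalities plus a single linear equality, to which Lemma~\ref{section2. existence System of LI} can be applied. Write $A_J=\sum_J \alpha_{ij}/n$ for $J\subseteq\{1,\dots,k+1\}$ (the paper's summation convention) and $A=A_{\{1,\dots,k+1\}}$. A one-line substitution shows that $\{p_i\}$ satisfies $(VI.1)$ precisely when $\{y_i\}$ satisfies
\begin{equation*}
\sum_{i=1}^{k+1}y_i=A,\qquad y_i<1\ \ (1\le i\le k+1),\qquad \sum_{i\in J}y_i>A_J\ \ (\emptyset\neq J\subsetneq\{1,\dots,k+1\}) \tag{$\star$}
\end{equation*}
where the requirement $y_i>0$ is automatic, being the member of the last family attached to $J=\{i\}$ (for which $A_{\{i\}}=0$). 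Every constraint of $(\star)$ except the equality is strict, so its solution set is relatively open in the affine subspace $\{\sum y_i=A\}$ (of dimension $k\ge1$); hence exhibiting one solution already produces a relatively open neighbourhood of solutions, which gives the ``infinitely many $\{p_i\}$'' assertion. It thus suffices to prove that $(\star)$ is solvable.

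I would then invoke Lemma~\ref{section2. existence System of LI}, encoding $\sum y_i=A$ as the pair $\pm(\sum y_i-A)\le0$ (solvable, e.g.\ by $y_i=A/(k+1)$). The lemma reduces solvability of $(\star)$ to the following dual statement: for all nonnegative multipliers $\{\mu_J\}$ (on $\sum_{i\in J}y_i>A_J$), $\{\nu_i\}$ (on $y_i<1$), and a real $\rho$ (net multiplier of the equality), not all $\mu_J,\nu_i$ zero and satisfying $\nu_\ell=S_\ell-\rho$ for each $\ell$ with $S_\ell:=\sum_{J\ni\ell}\mu_J$, one has $\Phi:=\sum_\ell\nu_\ell-\sum_J\mu_J A_J+\rho A>0$. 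Using $\sum_\ell\nu_\ell=\sum_J|J|\mu_J-(k+1)\rho$ and $\sum_J\mu_JA_J=\sum_{i<j}\frac{\alpha_{ij}}{n}T_{ij}$ with $T_{ij}:=\sum_{J\supseteq\{i,j\}}\mu_J$, a short bookkeeping computation rewrites this as
\begin{equation*}
\Phi=\sum_{\ell=1}^{k+1}\nu_\ell-\sum_{1\le i<j\le k+1}\frac{\alpha_{ij}}{n}\,(T_{ij}-\rho).
\end{equation*}
Since $T_{ij}\le S_i$ and $T_{ij}\le S_j$ yield $T_{ij}-\rho\le\min(\nu_i,\nu_j)$, and $\alpha_{ij},\nu_i\ge0$, this gives $\Phi\ge\sum_\ell\nu_\ell-\sum_{i<j}\frac{\alpha_{ij}}{n}\min(\nu_i,\nu_j)$.

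The core of the argument is to show $\sum_\ell\nu_\ell-\sum_{i<j}\frac{\alpha_{ij}}{n}\min(\nu_i,\nu_j)>0$ whenever $\nu\ge0$, $\nu\neq0$. I would prove this by a layer-cake decomposition: putting $U_t=\{\ell:\nu_\ell>t\}$,
\begin{equation*}
\sum_\ell\nu_\ell-\sum_{i<j}\frac{\alpha_{ij}}{n}\min(\nu_i,\nu_j)=\int_0^\infty\bigl(|U_t|-A_{U_t}\bigr)\,dt,
\end{equation*}
and the integrand is nonnegative for every $t$ and strictly positive whenever $U_t\neq\emptyset$: if $|U_t|\ge2$ then $A_{U_t}<|U_t|-1<|U_t|$ by the integrability hypothesis $(ii)$ of Theorem~\ref{main theorem 1} applied to $J=U_t$; if $|U_t|=1$ then $A_{U_t}=0<1$; and if $U_t=\{1,\dots,k+1\}$ then $A<k<k+1$, again by $(ii)$. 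As $\nu\neq0$, $U_t\neq\emptyset$ on a set of positive measure, so the integral is positive, and hence $\Phi>0$ unless $\nu\equiv0$. In that degenerate case $S_\ell=\rho$ for all $\ell$, the ``some multiplier positive'' hypothesis forces $\mu_{J_0}>0$ for some nonempty proper $J_0$ with $\rho=S_\ell>0$, and $\Phi=-\sum_{i<j}\frac{\alpha_{ij}}{n}(T_{ij}-\rho)\ge0$ with $\Phi=0$ only if $T_{ij}=\rho$ on every edge with $\alpha_{ij}>0$; but hypothesis $(\ref{section 6. Condition for undecomposable of 1,k+1})$ supplies such an edge $\{i_0,j_0\}$ with $i_0\in J_0$, $j_0\notin J_0$, whence $0<\mu_{J_0}\le S_{i_0}-T_{i_0j_0}=0$, a contradiction. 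Therefore $\Phi>0$ in all cases, and Lemma~\ref{section2. existence System of LI} produces a solution of $(\star)$.

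The routine parts are the substitution bookkeeping of the first two paragraphs and the elimination of the case $\nu\equiv0$; the one genuinely substantive step — and the only place where the two hypotheses enter essentially — is the layer-cake inequality $\sum_\ell\nu_\ell>\sum_{i<j}\frac{\alpha_{ij}}{n}\min(\nu_i,\nu_j)$, in which the integrability condition $(ii)$ is applied verbatim to the superlevel sets $U_t$ and the indecomposability condition $(\ref{section 6. Condition for undecomposable of 1,k+1})$ rules out the boundary case. I expect that recognizing this reformulation of the dual inequality is the crux; everything downstream is mechanical.
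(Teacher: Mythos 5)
Your proof is correct, and it takes a genuinely different route from the paper's. Both arguments reduce the existence of a solution to a dual feasibility condition via Lemma~\ref{section2. existence System of LI}, but from there they diverge. The paper works in the variables $\delta_i=1/p_i$ (system $(VI.2)$) and verifies the resulting dual inequality by its iterative ``process'' machinery: repeatedly replacing a pair of dual weights $\mu_{J_1},\mu_{J_2}$ by weights on $J_1\cap J_2$ and $J_1\cup J_2$ as in the process of type III, showing the objective is nondecreasing, passing to a subsequential limit, and analyzing the stable state (with the indecomposability hypothesis used to force a strict increase at some step). Your substitution $y_i=1-1/p_i$ flips all the proper-subset constraints to lower bounds of the same sign, and the layer-cake identity
\begin{equation*}
\sum_{\ell}\nu_\ell-\sum_{i<j}\frac{\alpha_{ij}}{n}\min(\nu_i,\nu_j)=\int_0^\infty\bigl(|U_t|-A_{U_t}\bigr)\,dt
\end{equation*}
settles the dual inequality in one stroke, applying condition $(ii)$ verbatim to each superlevel set $U_t$ and invoking $(\ref{section 6. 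Condition for undecomposable of 1,k+1})$ only to eliminate the boundary case $\nu\equiv0$ (where $T_{i_0j_0}\le S_{i_0}-\mu_{J_0}<\rho$ on the guaranteed cross-edge gives $\Phi>0$). This is shorter and more transparent than the paper's proof: it replaces a delicate iteration-plus-limit argument with an elementary integral decomposition, and it isolates precisely where each of the two hypotheses is used. The trade-off is that the paper's process machinery is the uniform tool reused for the more complicated systems $(V.1)$--$(V.4)$ and $(VI.3)$--$(VI.6)$, where the right-hand sides involve a minimum of two different set functions and a clean layer-cake reformulation is less immediate; your method, as written, is tailored to the present system where only one set function appears.
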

\begin{proof}
We begin with discussing the necessity of the additional assumption
(\ref{section 6. Condition for undecomposable of 1,k+1}) which does
not lose generality. This assumption is only necessary to ensure the existence of a solution to the system $(VI.1)$. For general
$\{\alpha_{ij}\}$ satisfying $\sum_{J}\alpha_{ij}<(|J|-1)n$ for all
subsets $J$ with $|J|\geq 2$, we need only assume that for each $i$
there exists some $\alpha_{ij}>0$ with $j\in\{1,2,\cdots,k+1\}$. By this weaker assumption, we can divide $\{1,2,\cdots,k+1\}$ into disjoint subsets $J_1$, $J_2$, $\cdots$, $J_l$ with $|J_i|\geq 2$ such that $\alpha_{ij}$ is positive with $i<j$ only if
\begin{equation*}
(i,j)\in \bigcup_{u=1}^l\{(s,t): s<t,\; s,t\in J_u\}.
\end{equation*}
If each $J_i$ can not be decomposed further as above, i.e.,
$\sum_{u\in I}\sum_{v\in J_i-I}\alpha_{uv}>0$ for any nonempty
proper subset $I$ of $J_i$, then we can reduce matters to $l$
multilinear functionals $\{\Lambda_{J_i}\}$ of form
(\ref{multilinear functional}). Therefore we may assume that
$\{1,2,\cdots,k+1\}$ cannot be decomposed as above. This is
equivalent to $\sum_{i\in J}\sum_{j\in J^c}\alpha_{ij}>0 $ for all
nonempty proper subsets $J$ of $\{1,2,\cdots,k+1\}$. Thus the
additional assumption (\ref{section 6. Condition for undecomposable
of 1,k+1}) does not lose generality.

Now we turn to verify the existence of $\{p_i\}$. Define
$\delta_i=1/p_i$. As required in System $(VI.1)$, $\{\delta_i\}$ should satisfy the following system of linear inequalities:
\begin{equation*}
(VI.2)
\begin{cases}
(i)\quad\; \sum\limits_{i=1}^{k+1}\delta_i
=k+1-\sum\limits_{1\leq i<j \leq k+1}\frac{\alpha_{ij}}{n};\\
(ii)\quad\; \delta_i>0,\quad 1\leq i \leq k+1;\\
(iii)\quad \sum\limits_J\delta_i
<|J|-\sum\limits_J\frac {\alpha_{ij}}{n},\quad J\neq \emptyset,
\quad J\subsetneqq \{1,2,\cdots,k+1\};\\
(iv)\quad \sum\limits_{\{1,2,\cdots,k+1\}}\delta_i
\leq k+1-\sum\limits_{\{1,2,\cdots,k+1\}}\frac {\alpha_{ij}}{n}.
\end{cases}
\end{equation*}
If we take $J=\{i\}$, then $(iii)$ implies $\delta_i<1$. Our proof of the existence of $\{p_i\}$ needs Lemma \ref{section2. existence System of LI}. It is clear that the system consisting of $(i)$ and $(iv)$ does have infinitely many solutions. To apply Lemma \ref{section2. existence System of LI}, we assume that
$\theta_1$, $\theta_2$, $\pi_i$,  $\mu_J$ are nonnegative numbers
satisfying
\begin{equation}\label{section 6. I cond by parameters}
(\theta_1-\theta_2)+\pi_i-\sum\limits_{J\ni i}\mu_J=0,
\quad 1\leq i \leq k+1,
\end{equation}
where either $\pi_i>0$ for some $i$ or $\mu_J>0$ for some nonempty proper subset $J$. By Lemma \ref{section2. existence System of
LI}, it is enough to show
\begin{equation}\label{section 6. objective function 1}
(\theta_1-\theta_2)\Big(k+1
-\sum\limits_{1\leq i<j \leq k+1}\frac{\alpha_{ij}}{n}\Big)
-\sum\limits_J\mu_J\Big(|J|-\sum\limits_{J}\frac{\alpha_{ij}}{n}\Big)
<0.
\end{equation}
If $\theta_1-\theta_2\leq 0$, then the above inequality follows immediately by our choice of parameters. Now we consider the case $\theta_1-\theta_2> 0$. By scaling, we may assume $\theta_1-\theta_2=1$. For any initial data $\{\mu_J(0)\}$
and $\{\pi_i(0)\}$ satisfying
$$\sum_{J\ni i}\mu_J=1+\pi_i,$$
we apply a process to obtain new data $\{\mu_J(N)\}$
and $\{\pi_i(N)\}$ for which the objective function in (\ref{section 6. objective function 1}) increases as $N$ with $\mu_J(N)$ in place of $\mu_J$. Now we describe this process. Choose two nonempty subsets
$J_1$ and $J_2$ satisfy conditions \textrm{$\mathbf{(a)~~and~~(c)}$} in $\mathbf{(\ref{section 5. restrictions on subsets})}$. It should be pointed out that we do not impose the condition $\mathbf{(b)}$ on $J_1$ and $J_2$. Then put
 \begin{equation}\label{section 6. Process Type III}
 \begin{cases}
 \mu_{J_t}(N)= \mu_{J_t}(N-1)- \mu_{J_1}(N-1)\wedge  \mu_{J_2}(N-1),
 \quad t=1,2\\
 \mu_{J_1\cap J_2}(N)=\mu_{J_1\cap J_2}(N-1)+ \mu_{J_1}(N-1)\wedge
 \mu_{J_2}(N-1)
 \quad \textrm{ if }\quad J_1\cap J_2 \neq \emptyset\\
\mu_{J_1\cup J_2}(N)=\mu_{J_1\cup J_2}(N-1)+ \mu_{J_1}(N-1)\wedge
 \mu_{J_2}(N-1)
 \end{cases}
 \end{equation}
and $\mu_J(N)=\mu_J(N-1)$ for other nonempty subsets $J$ of
$\{1,2,\cdots,k+1\}$. Though this process is not unique, it does not change the value of $\pi_i$,
i.e., $\pi_i(N)=\pi_i(0)$, and equations in (\ref{section 6. I cond by
parameters}) are still true with $\{\mu_J(0)\}$ replaced by
$\{\mu_J(N)\}$. Let $\mu_{\{1,\cdots,k+1\}}^\ast(N)$ be the supremum of
$\mu_{\{1,\cdots,k+1\}}(N)$ among all possible processes consisting of $N$ continuous steps. Let $\{\mu_J^\ast(N)\}$ be obtained by one of those $N$ continuous steps. Then $\mu_{\{1,\cdots,k+1\}}^\ast(N)$ increases as $N$ and
$\{\mu_J^\ast(N)\}$ has a uniform upper bound for each nonempty
subset $J$. By passing to a subsequence $\{N_t\}$, we obtain Cauchy
sequences $\{\mu_J^\ast(N_t)\}$. Let
\begin{equation}
\mu_{J}^\ast(\infty)
=\lim\limits_{t\rightarrow \infty}\mu_{J}^\ast(N_t),
\quad J\subseteq \{1,2,\cdots,k+1\}.
\end{equation}
Now we can show that $\mu_{\{1,\cdots,k+1\}}^\ast(\infty)$ is stable, i.e., any process described in (\ref{section 6. Process Type III}) does not change the value of $\mu_J$ for the new initial data $\widetilde{\mu}_J(0)=\mu_{J}^\ast(\infty)$. Indeed, the union of all proper subsets $J$ satisfying $\mu_J^\ast(\infty)>0$ is a proper subset of $\{1,\cdots,k+1\}$. Thus we get
\begin{equation}
\mu_{\{1,\cdots,k+1\}}^\ast(\infty)=1+\min\limits_{1\leq i\leq k+1}\pi_i.
\end{equation}
Notice that the objective function in (\ref{section 6. objective
function 1}) increases as $N$ with $\mu_J$ replaced by $\mu_J(N)$.
For this reason, we see that the inequality (\ref{section 6.
objective function 1}) is true when there exists a positive $\pi_i$.
Indeed, if some $\pi_i$ is positive and $\mu_{
\{1,\cdots,k+1\}}^\ast(\infty)=1$, then we obtain a $\mu_J^\ast(\infty)>0$
for some nonempty $J\subsetneqq \{1,\cdots,k+1\}$.

Assume now all $\pi_i$ are zero. In this case, the argument is
somewhat different and (\ref{section 6.
Condition for undecomposable of 1,k+1}) will be used. By our choice
of the parameters in (\ref{section 6. I cond by parameters}), it
follows that there is a positive $\mu_J=\mu_J(0)$ with $J$ being a
nonempty proper subset of $\{1,\cdots,k+1\}$. Thus
$\mu_{\{1,\cdots,k+1\}}(0)<1$. For all $N$, it is true that
\begin{equation*}
\sum\limits_{J\ni i}\mu_J^\ast(N)=1, \quad 1\leq i \leq k+1.
\end{equation*}
Recall that $\mu_{\{1,\cdots,k+1\}}^\ast(N)$ tends to $1$. This implies that each sequence $\mu_J^\ast(N)$ tends to zero for all proper subsets $J$. Choose a sufficiently large $N_0$ such that
\begin{equation*}
\mu_{\{1,\cdots,k+1\}}^\ast(N_0)>1-\varepsilon
>\mu_{\{1,\cdots,k+1\}}(0)
\end{equation*}
for small enough $\varepsilon>0$. This observation shows that in
the process, consisting of $N_0$ continuous steps, by which we obtain $\{\mu_J^\ast(N_0)\}$, there is a $M$-th step such that
$\mu_{\{1,\cdots,k+1\}}(M)$ is larger than
$\mu_{\{1,\cdots,k+1\}}(M-1)$. Combining the additional assumption
(\ref{section 6. Condition for undecomposable of 1,k+1}) together
with the observation (\ref{section5. for which i,j a_ij=0}), we
obtain
\begin{equation*}
\sum\limits_J\mu_J(M)\l(|J|-\sum\limits_{J}\frac{\alpha_{ij}}{n}\r)
<
\sum\limits_J\mu_J(M-1)\l(|J|-\sum\limits_{J}\frac{\alpha_{ij}}{n}\r).
\end{equation*}
The argument may vary depending on whether
$\mu_{\{1,\cdots,k+1\}}(N_0)=1$. If $\mu_{\{1,\cdots,k+1\}}(N_0)$ is equal
to $1$, we will obtain, using the notation
$A_k=k+1-\sum_{\{1,\cdots,k+1\}}\alpha_{ij}/n$,
\begin{eqnarray*}
A_k-\sum\limits_J\mu_J(0)\l(|J|-\sum\limits_{J}\frac{\alpha_{ij}}{n}\r)
&\leq&
A_k-\sum\limits_J\mu_J(M-1)\l(|J|-\sum\limits_{J}\frac{\alpha_{ij}}{n}\r)\\
&<&
A_k-\sum\limits_J\mu_J(M)\l(|J|-\sum\limits_{J}\frac{\alpha_{ij}}{n}\r).
\end{eqnarray*}
Notice that we also have
\begin{equation*}
A_k-\sum_J\mu_J(M)\Big(|J|-\sum\limits_{J}\frac{\alpha_{ij}}{n}
\Big)
\leq
A_k-\sum\limits_J\mu_J(N_0)\Big(|J|-\sum\limits_{J}\frac{\alpha_{ij}}{n}
\Big),
\end{equation*}
where the process used to obtain $\mu_J(N_0)$ is one of those $N$
continuous steps such that
$\mu_{\{1,\cdots,k+1\}}(N_0)$ equals $\mu_{\{1,\cdots,k+1\}}^\ast(N_0)$ and
$\mu_{\{1,\cdots,k+1\}}(M)>\mu_{\{1,\cdots,k+1\}}(M-1)$ for some $1\leq M\leq N_0$. The desired
inequality (\ref{section 6. objective function 1}) follows from the
assumption $\mu_{\{1,\cdots,k+1\}}(N_0)=1$.

If $\mu_{\{1,\cdots,k+1\}}(N_0)<1$, we only have
\begin{eqnarray*}
A_k-\sum\limits_J\mu_J(0)\l(|J|-\sum\limits_{J}\frac{\alpha_{ij}}{n}\r)
&<&
A_k-\sum\limits_J\mu_J(N_0)\l(|J|-\sum\limits_{J}\frac{\alpha_{ij}}{n}\r).
\end{eqnarray*}
However, we may regard $\{\mu_J^\ast(N_0)\}$ as new initial data
satisfying (\ref{section 6. I cond by
parameters}) with all $\pi_i=0$ and $\theta_1-\theta_2=1$ since
there exist at least two positive $\mu_J^\ast(N_0)$ for two nonempty proper subsets $J$. This implies
\begin{equation*}
A_k-\sum\limits_J\mu_J(N_0)\l(|J|-\sum\limits_{J}\frac{\alpha_{ij}}{n}
\r)
=
A_k-\sum\limits_J\mu_J^\ast(N_0)
\l(|J|-\sum\limits_{J}\frac{\alpha_{ij}}{n}\r)
\leq 0.
\end{equation*}
Combining above results, we have completed the proof of
(\ref{section 6. objective function 1}). Thus the system $(VI.2)$ has a
solution.

It remains to show that there are infinitely many solutions to the
system $(VI.2)$. Let $\mathcal{H}$ be the hyperplane in $\bR^{k+1}$
given by
\begin{equation}\label{section6. THe hyperplane P}
\sum\limits_{i=1}^{k+1}x_i=k+1
-\sum\limits_{1\leq i<j \leq k+1}\frac{\alpha_{ij}}{n}
\end{equation}
for $x=(x_1,\cdots,x_{k+1})$ in $\bR^{k+1}$. We equip $\mathcal{H}$ the subset topology of $\bR^{k+1}$. Then the set of solutions of the
system $(VI.2)$ forms an open convex subset of $\mathcal{H}$. Let $\{\delta_i^{(1)}\}$ and $\{\delta_i^{(2)}\}$ be two solutions. Then it is clear that their convex combinations
$\{\lambda\delta_i^{(1)}+(1-\lambda)\delta_i^{(2)}\}$ are also
solutions for all $0<\lambda<1$. Assume $\{\delta_i\}$ is a
solution to the system $(VI.2)$. For sufficiently small
$\varepsilon>0$, all points in the $\varepsilon$-neighborhood of
$\{\delta_i\}$ in $\mathcal{H}$ are also solutions. Indeed, if
$\rho=(\rho_1,\cdots,\rho_{k+1})\in \mathcal{H}$ and
$(\sum_{i=1}^{k+1}|\rho_i-\delta_i|^2)^{1/2}<\varepsilon,$ then $(ii)$ and $(iii)$ in $(VI.2)$ are also true for $\{\rho_i\}$ with
sufficiently small $\varepsilon>0$. Thus we have established our
claim. The proof is therefore concluded.
\end{proof}

As a corollary, we can apply Theorem \ref{main theorem 1} to obtain Theorem \ref{section5. nec-suff Selberg integral}.
\begin{coro}
Assume $\alpha_{ij}$ are nonnegative numbers for $1\leq i<j \leq k+1$. Then for all $f_i\in C_0^{\infty}$
$$
\int_{\mathbb{R}^{n(k+1)}}
\prod_{i=1}^{k+1}|f_{i}(x_{i})|
\prod_{1 \leq i<j\leq k+1}|x_i-x_j|^{-\alpha_{ij}}
dx_1dx_2\cdots dx_{k+1}<\infty
$$
if and only if $\{\alpha_{ij}\}$ satisfies the condition (ii) in Theorem \ref{main theorem 1}.
\end{coro}

Now we turn to prove that $T$ has a bounded extension from
$L^{p_1}\times \cdots \times L^{p_k}$ into $BMO$ under the
assumptions in Theorem \ref{section1. main theorem 2} with
$p_{k+1}=1$. It is worth noting that we may replace $BMO$ by
$L^\infty$ in Theorem \ref{section1. main theorem 2} if in addition $p_1,p_2,\cdots,p_k$ satisfy $\sum_{i=1}^k1/{p_i}\geq 1.$
But the boundedness on $BMO$ does not require this assumption. Now we prove the $L^{\infty}$ estimate for $T$ under the additional assumption $\sum_{i=1}^k1/p_i\geq 1$. It suffices to show that there exists a constant $C$ such that
\begin{equation}\label{section 6 Linfty estimate}
\int_{\mathbb{R}^{nk}}
\prod\limits_{i=1}^{k}\Big(f_{i}(x_{i})|x_i|^{-\alpha_{i,k+1}}\Big)
\prod\limits_{1 \leq i<j\leq k}|x_i-x_j|^{-\alpha_{ij}}
dx_1dx_2\cdots dx_{k}\leq C\prod_{i=1}^k\|f_i\|_{p_i}.
\end{equation}
For any nonempty proper subset $J\subseteq\{1,\cdots,k\}$, we put $I=J\cup\{k+1\}$ and then $I$ is a proper subset of $\{1,\cdots,k+1\}$. By the assumptions in Theorem \ref{section1. main theorem 2}, we have $\sum_I1/p_i+\sum_I\alpha_{ij}/n<|I|$. It follows immediately from $p_{k+1}=1$ that
$$\sum_{J}\left(\frac1{p_i}+\frac{\alpha_{i,k+1}}{n}\right)
+\sum_{J}\frac{\alpha_{ij}}{n}<|J|.$$
By the interpolation technique in $\S$4, it is easy to see that the inequality (\ref{section 6 Linfty estimate}) holds.

For $f_i\in C_0^\infty$, both Theorem \ref{section5. nec-suff Selberg integral} and Theorem \ref{section 6. exi of infty pairs p} imply that $T(f_1,\cdots,f_k)$ is locally integrable. For each cube $Q$ with sides parallel to the axes, we use
$\leftidx{^\ast}{Q}$ to denote the cube which is concentric to $Q$
but has the side length twice as long as that of $Q$. We first
decompose $T$, corresponding to $Q$, as a major term
\begin{equation*}
T_S(f_1,\cdots,f_k)(x_{k+1})=\int_{\boldsymbol{(\leftidx{^\ast}{Q}^c)}^k}
\prod_{S}f_i(x_i)\prod_{\{1,\cdots,k+1\}}|x_i-x_j|^{-\alpha_{ij}}dV_{S}
\end{equation*}
and $k$ terms of the form
\begin{equation*}
T_i(f_1,\cdots,f_k)=\int_{\boldsymbol{\leftidx{^\ast}{Q}}}
\l(\int_{(\mathbb{R}^n)^{k-1}}\prod_{S}f_i(x_i)
\prod_{\{1,\cdots,k+1\}}|x_i-x_j|^{-\alpha_{ij}}dV_{S-\{i\}}\r)dx_i
\end{equation*}
for $i\in \{1,\cdots,k\}$. Here $dV_J$ is the product Lebesgue measure
$\prod_{j\in J}dx_j$. As in $\S$4, for $1\leq i \leq k$ we claim
that
$$\int_Q|T_i(f_1,\cdots,f_k)(x_{k+1})|dx_{k+1}\leq C|Q|\prod_S\|f_i\|_{p_i}.$$
Take $i=1$ for example. Put $
\overline{p_1}=\l(\frac{\epsilon}{1+\epsilon}+\frac1{p_1}\r)^{-1},$
$\overline{p_{k+1}}=1+\epsilon,$ and $\overline{p_i}=p_i$ for $2\leq i \leq k$. Then $\{\overline{p_i}\}$ and $\{\alpha_{ij}\}$ satisfy conditions $(i)$, $(ii)$ and $(a)$ of $(iii)$ in Theorem \ref{main theorem 1} with small $\epsilon>0$. Thus
\begin{eqnarray*}
\int_Q|T_1(f_1,\cdots,f_k)|dx_{k+1} &\leq&
\Lambda(|f_1|\chi_{\boldsymbol{\leftidx{^\ast}{Q}}},|f_2|,\cdots,
|f_k|,\chi_{Q})\\
&\leq& C|Q|\prod_S\|f_i\|_{p_i}.
\end{eqnarray*}
Similarly, we can show that the same estimate still holds for
each $T_i(f_1,\cdots,f_k)$ with $2\leq i\leq k$. This also implies the local integrability of $T(f_1,\cdots,f_k)$.

Now it remains to show the average of
$|T_S(f_1,\cdots,f_k)-T_S(f_1,\cdots,f_k)_Q|$ over $Q$ is bounded by a constant multiple of $\prod_S\|f_i\|_{p_i}$. Let $\Theta$ consist of those $i\in \{1,\cdots,k\}$ such that $\alpha_{i,k+1}>0$. By assumptions in Theorem \ref{section1. main theorem 2}, $\Theta$ is nonempty. It is easy to see that
\begin{eqnarray}\label{section6. BMO EST k-linear over Q^c}
&&\frac{1}{|Q|}\int_Q
\Big|T_S(f,\cdots,f_k)(x_{k+1})-T_S(f_1,\cdots,f_k)_Q\Big|
dx_{k+1}\nonumber\\
&\leq& C |Q|^{1/n}
\sum_{t\in \Theta}\int_{\boldsymbol{(\leftidx{^\ast}{Q}^c)^k}}
\prod\limits_{i=1}^k |\widetilde{f_i^{(t)}}(x_i)|
\prod\limits_{1\leq i<j \leq k}|x_i-x_j|^{-\alpha_{ij}}dV_S,
\end{eqnarray}
with
$$\widetilde{f_i^{(t)}}(x_i)
=f_i(x_i)|x_i-c_Q|^{-\alpha_{i,k+1}-\boldsymbol{\delta}_i^t}, \quad
1\leq i \leq k.$$
The treatment of each term in the above summation
is similar. To obtain the desired estimate, we shall prove that each term is not greater than a constant multiple of
$\prod_{i=1}^k\|f_i\|_{p_i}$. By Theorem \ref{main theorem 1}, we
need to solve the following system of linear inequalities:
\begin{equation}
(VI.6)
\begin{cases}
(i)\quad\quad \delta_{i}\geq \frac{1}{p_i}, \quad 1\le i \le k;
\nonumber\\
(ii) \quad\;\;\, \delta_{i}<
1/p_i+(\alpha_{i,k+1}+\boldsymbol{\delta}_i^t)/n,
\quad \textrm{if $\alpha_{i,k+1}>0$, i.e., $i\in \Theta$};\nonumber\\
 \quad \quad\quad\, \delta_{i}\leq
1/{p_i}+\alpha_{i,k+1}/n,
\qquad\quad\;\;\; \textrm{if $\alpha_{i,k+1}=0$, i.e., $i\notin \Theta$};\nonumber\\
(iii)\quad \sum\limits_{i=1}^k\delta_{i}
=k-\sum\limits_{S}\alpha_{ij}/n; \nonumber \\
(iv) \quad\;
\sum\limits_{J}\delta_{i} < |J|-\sum\limits_J \alpha_{ij}/n
\quad \qquad\quad\textrm{for nonempty proper $J\subseteq \{1,\cdots,k\}$}. \nonumber
\end{cases}
\end{equation}
For $i\notin\Theta$, we indeed have $\delta_i=\frac1{p_i}$ and include it here for convenience of notations. For $i\in \Theta$, we see that $\widetilde{f_i^{(t)}}\in L^{\delta_i^{-1}}(\leftidx{^\ast}{Q}^c)$. To prove the existence of solutions to $(VI.6)$, we first present a useful observation:
\begin{equation}\label{section 6. Con for undecom for S}
\sum\limits_{i\in J}\sum\limits_{j\in J^c}\alpha_{ij}>0
\end{equation}
for all nonempty proper subsets $J$ of $\{1,2,\cdots,k\}$. Here $J^c$ is the
complement of $J$ relative to $\{1,2,\cdots,k\}$. If there were some nonempty proper
$J_1\subseteq \{1,\cdots,k\}$ such that all $\alpha_{ij}=0$ for $i\in J_1$ and
$j\in J_1^c$, we would obtain a contradiction. By assumptions in
Theorem \ref{section1. main theorem 2}, we have
\begin{eqnarray*}
&&\sum\limits_{J}\frac 1{p_i}
+\sum\limits_{J\cup \{k+1\} } \frac{\alpha_{ij}}{n}
+\frac 1{p_{k+1}}<|J|+1
\end{eqnarray*}
with $J=J_1$ and $J=J_1^c$. Recall $p_{k+1}=1$. The above two
inequalities contradict the homogeneity condition $(i)$ in Theorem
\ref{section1. main theorem 2}. Thus
(\ref{section 6. Con for undecom for S}) is true.

Suppose that $u_i$, $v_i$, $\theta_1$, $\theta_2$ and $\mu_J$ are
nonnegative numbers satisfying
\begin{equation}\label{section6. (2) cond satisfied by parameters}
u_i-v_i+(\theta_1-\theta_2)-\sum\limits_{J\ni i}\mu_J=0,\quad
1\leq i\leq k,
\end{equation}
where either $v_i>0$ for some $i\in\Theta$ or $\mu_J>0$ for some nonempty proper subset $J$ of $\{1,\cdots,k\}$. Our task is to show
\begin{eqnarray}\label{section 7. IIobjective function}
&&\sum\limits_{S}\frac{u_i}{p_i}
-\sum\limits_{S}v_i\l(\frac{1}{p_i}
+\frac{\alpha_{i,k+1}+\boldsymbol{\delta}_i^t}{n}\r)
+(\theta_1-\theta_2)\l(k-\sum\limits_{S}\frac{\alpha_{ij}}{n}\r)
-\sum\limits_{J\subset S}
\mu_J\l(|J| -\sum\limits_{J}\frac{\alpha_{ij}}{n}\r)\nonumber \\
&=& -\sum\limits_{J\subset S}\mu_J\l(|J|
-\sum\limits_{J}\frac{1}{p_i}
-\sum\limits_{J}\frac{\alpha_{ij}}{n}\r) +(\theta_1-\theta_2){B}_S
-\sum\limits_{S}v_i\frac{\alpha_{i,k+1}}{n}-\frac{v_t}{n}<0,
\end{eqnarray}
where $$B_J=|J|-\sum_J1/{p_i}-\sum_{J}{\alpha_{ij}}/{n}$$
for nonempty subsets $J$ of $\{1,\cdots,k\}$. For convenience, we use $\boldsymbol{H}$ to
denote the objective function in the above inequality. Similarly, we may assume $\theta_1-\theta_2=1$.

Now we also need a process described as in (\ref{section 6. Process
Type III}). For nonempty subsets $J_1$ and $J_2$ of $\{1,2,\cdots,k\}$ satisfying conditions $\mathbf{(a)}$ and $\mathbf{(c)}$ in $\mathbf{(\ref{section 5. restrictions on subsets})}$, put
\begin{equation}\label{section 6. Process Type IV}
\begin{cases}
\mu_{J_i}(N)= \mu_{J_i}(N-1)- \mu_{J_1}(N-1)\wedge  \mu_{J_2}(N-1),
\quad i=1,2.\\
\mu_{J_1\cap J_2}(N)=\mu_{J_1\cap J_2}(N-1)+ \mu_{J_1}(N-1)\wedge
\mu_{J_2}(N-1)
\quad \textrm{if} \quad J_1\cap J_2 \neq \emptyset\\
\mu_{J_1\cup J_2}(N)=\mu_{J_1\cup J_2}(N-1)+ \mu_{J_1}(N-1)\wedge
\mu_{J_2}(N-1)
\end{cases}
\end{equation}
and $\mu_J(N)=\mu_J(N-1)$ for other nonempty subsets $J$ of $\{1,\cdots,k\}$.
Here we also do not require $J_1\cap J_2\neq \emptyset$ in the
recursion. For any initial data $\{\mu_J(0)\}$, let
$\{\mu_J^{\ast}(N)\}$ be obtained by one of those processes
consisting of $N$ steps such that $\mu_S(N)$ attains its maximum
$\mu_S^{\ast}(N)$. Since the process does not change the values of
$u_i$ and $v_i$, by passing to a subsequence $\{N_t\}$, we can denote the limit of $\mu_J^{\ast}(N_t)$ by
$\mu_J^{\ast}(\infty)$. We can prove that $\mu_S^{\ast}(\infty)$ is stable with respect to the process (\ref{section 6. Process Type IV}). However, $\mu_J^{\ast}(\infty)$ may not be stable generally for $J\subsetneqq \{1,\cdots,k\}$. Similarly, if $\{\mu_J^{\ast}(\infty)\} $ is regarded as new initial data, then we can apply the process (\ref{section 6. Process Type IV}) again to $\mu_J^{\ast}(\infty)$ for all proper subsets $J$. This procedure will continue if not all $\mu_J^{\ast}(\infty)$ are stable. For this reason, we can assume that all $\mu_J^{\ast}(\infty)$ are stable. By this process, we also have
\begin{flalign}\label{section7. object function is increasing}
&-\sum\limits_{J\subset S}\mu_J(0)\l(|J|
-\sum\limits_{J}\frac{1}{p_i}
-\sum\limits_{J}\frac{\alpha_{ij}}{n}\r)
\leq
-\sum\limits_{J\subset S}\mu_J^\ast(\infty)\l(|J|
-\sum\limits_{J}\frac{1}{p_i}
-\sum\limits_{J}\frac{\alpha_{ij}}{n}\r)&
\end{flalign}
which becomes a strict inequality if $\mu_S^\ast(\infty)>\mu_S(0)$
by the property (\ref{section 6. Con for undecom for S}). Let
$w_i=u_i-v_i$ for $1\le i \le k$. Let $\sigma(1),\sigma(2),\cdots,\sigma(k)$ be a permutation of $1,2,\cdots,k$ such that
$$w_{\sigma(1)}\geq w_{\sigma(2)}\geq \cdots \geq w_{\sigma(k)}.$$
Since all $\mu_J^{\ast}(\infty)$ are stable, it follows from (\ref{section6. (2) cond satisfied by parameters}) that we have
\begin{eqnarray*}
\mu_{J_{k}}^{\ast}(\infty)&=&\mu_{\{1,\cdots,k\}}^{\ast}(\infty)
~=~1+w_{\sigma(k)},\\
\mu_{J_{k-i}}^{\ast}(\infty)&=&w_{\sigma(k-i)}-w_{\sigma(k-i+1)},~~~
1\leq i\leq k-1,
\end{eqnarray*}
where
$$J_k =\{\sigma(1),\sigma(2),\cdots,\sigma(k)\}$$
and
$$J_{k-i}= \{\sigma(1),\sigma(2),\cdots,\sigma(k-i)\}$$
for $1\leq i\leq k-1$.
Since $p_{k+1}=1$, it follows from $(i)$ in Theorem \ref{section1. main theorem 2} that $B_S=\sum_{S}\alpha_{i,k+1}/n$. Then the objective function $\boldsymbol{H}$ is equal to
\begin{eqnarray*}
\boldsymbol{H}
&=&-\sum\limits_{i=1}^{k-1}(w_{\sigma(i)}-w_{\sigma(i+1)})
{B}_{J_i}-(1+w_{\sigma(k)})B_{S}+B_S
-\sum_Sv_i\frac{\alpha_{i,k+1}}{n}
-\frac{v_t}{n}\\
&=&-\sum\limits_{i=1}^{k-1}(w_{\sigma(i)}-w_{\sigma(i+1)})
{B}_{J_i}-(1+w_{\sigma(k)})B_{S}
+\sum_S\Big((u_i-v_i)+1\Big)\frac{\alpha_{i,k+1}}{n}\\
& &-\sum_Su_i\frac{\alpha_{i,k+1}}{n}
-\frac{v_t}{n}\\
&=&-\sum\limits_{i=1}^{k-1}(w_{\sigma(i)}-w_{\sigma(i+1)})
{B}_{J_i}-(1+w_{\sigma(k)})B_{S}
+\sum_S\l(\sum_{J\ni i}\mu_J^{\ast}(\infty)\r)\frac{\alpha_{i,k+1}}{n}\\
& &-\sum_Su_i\frac{\alpha_{i,k+1}}{n}
-\frac{v_t}{n}\\
&=&-\sum\limits_{i=1}^{k-1}(w_{\sigma(i)}-w_{\sigma(i+1)}){B}_{J_i}
-(1+w_{\sigma(k)})B_{S}
+\sum_{J\subset S}\mu_J^{\ast}(\infty)\l(\sum_{J\ni i}\frac{\alpha_{i,k+1}}{n}\r)\\
& &-\sum_Su_i\frac{\alpha_{i,k+1}}{n}
-\frac{v_t}{n}.
\end{eqnarray*}
Hence we have
\begin{eqnarray*}
\boldsymbol{H}
&=&-\sum\limits_{i=1}^{k-1}(w_{\sigma(i)}-w_{\sigma(i+1)}){B}_{J_i}
+\sum_{i=1}^{k-1}(w_{\sigma(i)}-w_{\sigma(i+1)})\l(\sum_{J_i\ni j}
\frac{\alpha_{j,k+1}}{n}\r)\\
& &-\sum_Su_i\frac{\alpha_{i,k+1}}{n}
-\frac{v_t}{n}\\
&=&-\sum\limits_{i=1}^{k-1}(w_{\sigma(i)}-w_{\sigma(i+1)})
{B}_{J_i\cup\{k+1\}}
-\sum\limits_S u_i\frac{\alpha_{i,k+1}}{n}-\frac{v_t}{n}.
\end{eqnarray*}
Since $w_{\sigma(i)}-w_{\sigma(i+1)}\geq 0$ and ${B}_{J_i\cup\{k+1\}}>0$ for $1\leq
i \leq k-1$, the desired inequality $\boldsymbol{H}<0$ follows if
$w_{\sigma(i)}-w_{\sigma(i+1)}> 0$ for some $1\leq i\leq k-1$. Assume now
$w_i=w_j$ for all $i,j$ in $\{1,2,\cdots,k\}$. Then we have
$$\boldsymbol{H} =
-\sum_{\{1,\cdots,k\}}u_i\frac{\alpha_{i,k+1}}{n}-\frac{v_t}{n}.$$
If $w_1=u_1-v_1>0$, then all $u_i$ are positive and hence
$\boldsymbol{H}<0$. If $w_1=u_1-v_1=0$, then $u_i=v_i$ for
$i\in \{1,\cdots,k\}$. If there is a positive $u_i$ for those $i$ such that
$\alpha_{i,k+1}>0$, we also get $\boldsymbol{H}<0$. Otherwise all $u_i$ and $v_i$ are zero. By the choice of $u_i$, $v_i$ and $\mu_J$, we see that there exists a
$\mu_J(0)>0$ for a nonempty $J\subsetneqq \{1,\cdots,k\}$. However, (\ref{section7. object function is increasing}) becomes a strict inequality in this case. Therefore we also obtain $\boldsymbol{H}<0$.

For each $t\in\Theta$, let $\{\delta_i(t):1\leq i\leq k\}$ be a
solution of the system $(VI.6)$. Then by Theorem \ref{main theorem
1} and H\"{o}lder's inequality we obtain
\begin{eqnarray*}
&&\frac{1}{|Q|}
\int_Q\Big|T_S(f,\cdots,f_k)(x_{k+1})-T_S(f_1,\cdots,f_k)_Q\Big| dx_{k+1}\\
&\leq&
C|Q|^{1/n}\sum_{t\in\Theta}\prod_{S}\|\widetilde{f_i^{(t)}}
\chi_{\boldsymbol{\leftidx{^\ast}{Q}}^c}\|_{\frac{1}{\delta_i(t)}}\\
&\leq& C\prod_{S}\|f_i\|_{p_i}.
\end{eqnarray*}
This proves that $T$ has a bounded extension from $L^{p_1}\times
\cdots \times L^{p_k}$ into $BMO$.

\section{Appendix}
In this section, we shall prove the claim in Remark \ref{section5 Remark 5.1} in $\S$5. Indeed, there is an analogue of Theorem \ref{Estimtates of Generalization of Riesz Potential} on the sphere $S^n=\{x\in\bR^{n+1}:|x|=1\}$. For $\xi,\eta\in S^n$, we shall use $|\xi-\eta|$ to denote the standard Euclidean metric between $\xi$ and $\eta$ in $\bR^{n+1}$.
\begin{theorem}\label{section7. Estimtates of Generalization of Riesz Potential on Sphere}
Assume that $0<\alpha_i<n$ for $1\leq i \leq k$ satisfy $\sum_{i=1}^k\alpha_i>n$. We have the following estimate:
\begin{equation*}
\int_{S^n}\prod\limits_{i=1}^k
\big|\xi-\xi_i\big|^{-\alpha_i}d\sigma(\xi)
\leq C
\sum\limits_{u=1}^k
\boldsymbol{L}_u(\xi_1,\xi_2,\cdots,\xi_k),~~~\xi_i\in S^n~~{\rm for}~~1\leq i\leq k,
\end{equation*}
where $d\sigma$ is the Lebesgue measure on the $S^n$. Here $\boldsymbol{L}_u$ is defined by
\begin{equation*}
\boldsymbol{L}_u(\xi_1,\xi_2,\cdots,\xi_k)=
\begin{cases}
d_S^{n-\sum_{S}\alpha_i}
\l(\mathlarger{\mathlarger{\chi}}_{\{\sum_{S-\{u\}}\alpha_i<n\}}
+\chi_{\{\sum
_{S-\{u\}}\alpha_i=n\}}\log \frac{2d_S}{d_{S-\{u\}}}\r)\\
d_S^{-\alpha_u}
\int_{S^n}\prod_{S-\{u\}}|\xi-\xi_i|^{-\alpha_i}d\sigma(\xi) ,\quad {\rm if}
~~\sum_{S-\{u\}}\alpha_i>n,
\end{cases}
\end{equation*}
where the above characteristic functions $\chi$ are functions of
$\alpha_1,\cdots,\alpha_k$. Also $S=\{1,2,\cdots,k\}$ and
$d_I=\sum_I|\xi_i-\xi_j|$ for subsets $I$ of $S$ with
$|I|\geq 2$.
\end{theorem}
The proof of the above theorem is the same as that of Theorem
\ref{Estimtates of Generalization of Riesz Potential}. First, we state a variant of Lemma \ref{Sec2. estimate of FI on the critical order} on the sphere $S^n$.
\begin{lemma}\label{section7. estimate of FI on the critical order}
If $\alpha_i>0$, $1\leq i \leq k$, satisfy $\sum_{i}\alpha_i=n$ with $k\geq 2$, then there holds
\begin{equation}
\int_{S^n}\prod\limits_{i=1}^k |\xi-\xi_i|^{-\alpha_i}d\sigma(\xi) \leq
C\log\frac{C}{d_S},~~~\xi_i\in S^n~~{\rm for}~~1\leq i\leq k,
\end{equation}
where $C$ depends on $\alpha_1,\cdots,\alpha_k$ and the dimension $n$ but not on choices of $\xi_i\in S^n$. Moreover, the reverse of the above inequality is also true.
\end{lemma}
We now turn to the proof of Lemma \ref{section7. estimate of FI on the critical order}.\\
\begin{proof}
If there exist two points $\xi_i$ and $\xi_j$ such that $|\xi_i-\xi_j|\geq 1$, then the integral in the lemma is finite with a bound depending only on $\alpha_i$ and the dimension $n$. Now we assume that $|\xi_i-\xi_j|\leq 1$ for all $1\leq i<j\leq k$. By rotation, we assume $\xi_1=(0,\cdots,0,-1)$. It suffices to show that
\begin{equation}\label{section 7. ineq 1 in proof of Lem1}
\int_{\xi\in S^n,~|\xi-\xi_1|\leq \sqrt{2}}\prod\limits_{i=1}^k |\xi-\xi_i|^{-\alpha_i}d\sigma(\xi) \leq
C\log\frac{C}{d_S}.
\end{equation}
Let $\pi$ be the stereographic projection from $\bR^n$ onto the sphere $S^n$ minus the north pole. Then
\begin{equation*}
\pi(x_1,x_2,\cdots,x_n)=\l(\frac{2x_1}{1+|x|^2},
\frac{2x_2}{1+|x|^2},\cdots,
\frac{2x_n}{1+|x|^2},\frac{|x|^2-1}{1+|x|^2}\r),
~~~x=(x_1,x_2,\cdots,x_n)\in\bR^n.
\end{equation*}
The inverse of $\pi$ is given by
\begin{equation*}
\pi^{-1}(\xi_1,\xi_2,\cdots,\xi_{n+1})=\l(\frac{\xi_1}{1-\xi_{n+1}},
\frac{\xi_2}{1-\xi_{n+1}},\cdots, \frac{\xi_n}{1-\xi_{n+1}}\r)
\end{equation*}
for $(\xi_1,\xi_2,\cdots,\xi_{n+1})\in S^n-\{e_{n+1}\}$, where $e_{n+1}$ is the north pole of $S^n$. By direct computations, we have the following properties:
\begin{eqnarray*}
|\pi(x)-\pi(y)|&=&
\frac{2|x-y|}{\sqrt{1+|x|^2}\sqrt{1+|y|^2}},
~~~x,y\in\bR^n,\\
|\pi^{-1}(\xi)-\pi^{-1}(\eta)|
&=&\frac{|\xi-\eta|}{\sqrt{1-\xi_{n+1}}\sqrt{1-\eta_{n+1}}}
=\frac{2|\xi-\eta|}{|e_{n+1}-\xi||e_{n+1}-\eta|},
~~~\xi,\eta\in S^n-\{e_{n+1}\}.
\end{eqnarray*}
Define $\pi(x)=\xi$ and $\pi(x_i)=\xi_i$ for $1\leq i \leq k$. Then
the integral in (\ref{section 7. ineq 1 in proof of Lem1}) is equal to
\begin{eqnarray*}
\int_{B_1(0)}\prod_{i=1}^k\l(\frac{2|x-x_i|}{\sqrt{1+|x|^2}
\sqrt{1+|x_i|^2}}\r)^{-\alpha_i}\l(\frac2{1+|x|^2}\r)^ndx
\approx
\int_{B_1(0)}\prod_{i=1}^k|x-x_i|^{-\alpha_i}dx
\end{eqnarray*}
since all $x_i\in B_1(0)\subset\bR^n$. By previous assumptions, we see that $|\xi_i-\xi_j|$ are comparable to $|x_i-x_j|$. Thus we can apply Lemma \ref{Sec2. estimate of FI on the critical order} to obtain the desired estimate. The proof is complete.
\end{proof}

By Lemma \ref{section7. estimate of FI on the critical order}, we can prove Theorem \ref{section7. Estimtates of Generalization of Riesz Potential on Sphere} now.\\
\begin{proof}
Without loss of generality, we assume $L=\max_{S}|\xi_i-\xi_j|=|\xi_1-\xi_k|>0$. Since the desired estimate is rotation invariant, we may assume that $\xi_1$ is the south pole on $S^n$. Divide the integral in the theorem into two parts as in the proof of Theorem \ref{Estimtates of Generalization of Riesz Potential}. We first consider the integral over $|\xi-\xi_1|\leq L/2$.

$\boldsymbol{Estimate~of~the~integral~over~\textrm{$|\xi-\xi_1|\leq L/2$}}$

It is easy to see that
\begin{equation*}
\int_{\xi\in S^n,~|\xi-\xi_1|\leq L/2}\prod_{i=1}^k |\xi-\xi_i|^{-\alpha_i}d\sigma
\leq
Cd_S^{-\alpha_k}\int_{\xi\in S^n,~|\xi-\xi_1|\leq L/2}\prod_{i=1}^{k-1} |\xi-\xi_i|^{-\alpha_i}d\sigma.
\end{equation*}

Case (i): $\sum_{i=1}^{k-1}\alpha_i<n$.

We claim that the right side of the above inequality is bounded by a constant multiple of $L^{n-\sum_{S}\alpha_i}$. For $L\geq 1$, the claim is obvious since $\prod_{i=1}^{k-1} |\xi-\xi_i|^{-\alpha_i}$ is integrable on $S^n$. Now assume $L<1$. Then we can use the stereographic projection $\pi$ to change variables as in the proof of Lemma \ref{section7. estimate of FI on the critical order}. Our claim follows from the observation that $|\pi^{-1}(\xi)-\pi^{-1}(\eta)|$ is comparable to $|\xi-\eta|$ for all $\xi,\eta\in \{\xi\in S^n:|\xi-\xi_1|\leq L\}$.

Case (ii) $\sum_{i=1}^{k-1}\alpha_i=n$.

We shall prove that the integral of
$\prod_{i=1}^{k-1} |\xi-\xi_i|^{-\alpha_i}$
over $\{\xi:|\xi-\xi_1|<L/2\}$ is bounded by a constant multiple of $\log(2d_S/d_{S-\{k\}})$. Assume $|\xi_i-\xi_1|\leq 2L/3$ for all $2\leq i \leq k-1$. Otherwise if $|\xi_{i_0}-\xi_1|> 2L/3$ for some
$2\leq i_0 \leq k-1$, then it is clear that
\begin{eqnarray*}
\int_{S^n:~|\xi-\xi_1|\leq L/2}\prod_{i=1}^{k-1} |\xi-\xi_i|^{-\alpha_i}d\sigma
&\leq&
CL^{-\alpha_{i_0}}\int_{S^n:~|\xi-\xi_1|\leq L/2}\prod_{i=1,i\neq i_0}^{k-1} |\xi-\xi_i|^{-\alpha_i}d\sigma\\
&\leq&CL^{-\alpha_{i_0}}L^{n-\sum_{i\neq i_0,k}\alpha_i}
\end{eqnarray*}
which is bounded by a constant since $\sum_{i=1}^{k-1}\alpha_i=n$. Therefore we can now assume $|\xi_i-\xi_1|\leq 2L/3$ for $2\leq i \leq k-1$. Note that $2L/3\leq 4/3<\sqrt{2}$. Using the stereographic projection $\pi$ again, we define $\pi(x)=\xi$ and $\pi(x_i)=\xi_i$ for $1\leq i \leq k-1$. Then
\begin{eqnarray*}
\int_{S^n:~|\xi-\xi_1|\leq L/2}\prod_{i=1}^{k-1} |\xi-\xi_i|^{-\alpha_i}d\sigma
&\leq &
C\int_{|x|\leq CL}\prod_{i=1}^{k-1} |x-x_i|^{-\alpha_i}dx\\
&\leq&
C\log\l(\frac{CL}{\sum_{S-\{k\}}|x_i-x_j|}\r).
\end{eqnarray*}
Since $L\approx d_S$ and $|x_i-x_j|\approx |\xi_i-\xi_j|$, the desired estimate follows.

Case (iii): $\sum_{i=1}^{k-1}\alpha_i>n$.

In this case, the desired estimate is obvious.

$\boldsymbol{Estimate~of~the~integral~over~\textrm{$|\xi-\xi_1|> L/2$}}$

Case (i): $\sum_{i=2}^{k}\alpha_i>n$.

As above, the desired estimate is obvious. Indeed, we have
\begin{eqnarray*}
\int_{S^n:~|\xi-\xi_1|> L/2}\prod_{i=1}^{k} |\xi-\xi_i|^{-\alpha_i}d\sigma
&\leq& CL^{-\alpha_1}\int_{S^n}\prod_{i=2}^{k} |\xi-\xi_i|^{-\alpha_i}d\sigma.
\end{eqnarray*}

Case (ii): $\sum_{i=2}^{k}\alpha_i\leq n$ and $L\geq 1$.

It follows from $1\leq L\leq 2$ that
\begin{eqnarray*}
\int_{S^n:~|\xi-\xi_1|> L/2}\prod_{i=1}^{k} |\xi-\xi_i|^{-\alpha_i}d\sigma
&\approx & \int_{S^n:~|\xi-\xi_1|> L/2}\prod_{i=2}^{k} |\xi-\xi_i|^{-\alpha_i}d\sigma.
\end{eqnarray*}
Then the desired estimate is true since the integral of $\prod_{i=2}^{k} |\xi-\xi_i|^{-\alpha_i}$ over $\{S^n:~|\xi-\xi_1|> L/2\}$ is bounded by a constant.
If $\sum_{i=2}^{k}\alpha_i<n$, then the desired estimate is true. In the case $\sum_{i=2}^{k}\alpha_i=n$, our estimate is also true by Lemma \ref{section7. estimate of FI on the critical order}.

Case (iii): $\sum_{i=2}^{k}\alpha_i\leq n$ and $L<1$.

Since $0<\alpha_1<n$, we see that $\sum_{S}\alpha_i<2n$. Then by the stereographic projection $\pi$, we define $\pi(x)=\xi$ and $\pi(x_i)=\xi_i$ for $1\leq i \leq k$. Since $\xi_1$ is the south pole and $|\xi_i-\xi_1|\leq L<1$, there exist two positive constants $c_1$ and $c_2$, independent of $L$, such that $|x_i|\leq c_1L$ and $|x|\geq c_2L$ for $|\xi-\xi_1|>L/2$. Then
\begin{eqnarray*}
\int_{S^n:~|\xi-\xi_1|> L/2}\prod_{i=1}^{k} |\xi-\xi_i|^{-\alpha_i}d\sigma
&\leq&
\int_{|x|\geq c_2L}\prod_{i=1}^k\l(\frac{2|x-x_i|}{\sqrt{1+|x|^2}
\sqrt{1+|x_i|^2}}\r)^{-\alpha_i}\l(\frac2{1+|x|^2}\r)^ndx\\
&\leq & C\int_{|x|\geq c_2L}\prod_{i=1}^{k} |x-x_i|^{-\alpha_i}dx\\
&\leq & CL^{-\alpha_1} \int_{c_2L\leq|x|\leq 2L}\prod_{i=2}^{k} |x-x_i|^{-\alpha_i}dx,
\end{eqnarray*}
where we have used the fact $\sum_{i=1}^k\alpha_i<2n$ in the second inequality. By a similar argument as in our treatment of the integral over $\{S^n:~|\xi-\xi_1|\leq L/2\}$, we can also obtain the desired estimate. The proof of the theorem is complete.
\end{proof}\\

\noindent{\bf Acknowledgements.} The authors would like to express their deep gratitude to the referees for many valuable suggestions on this paper. This work
was supported by the National Natural Science Foundation of China under Grant Nos. 11471309 and 11561062.

\end{document}